\documentclass[a4paper,oneside]{amsart}

\usepackage{amsmath,amssymb,amsthm,amsfonts,amsrefs,enumerate,mathrsfs,mathtools,setspace,tikz,tikz-cd, fancyhdr}
\usepackage[colorlinks=true, linkcolor=blue, citecolor=blue]{hyper ref}
\usepackage[normalem]{ulem}
\usepackage[margin=1.5in]{geometry}

\ProcessOptions\relax

%Environments%
\theoremstyle{plain}
\newtheorem{theorem}[subsubsection]{Theorem}
\newtheorem{prop}[subsubsection]{Proposition}
\newtheorem{lemma}[subsubsection]{Lemma}

\newtheorem{corollary}[subsubsection]{Corollary}

\theoremstyle{definition}

\newtheorem{definition}[subsubsection]{Definition}
\newtheorem{remark}[subsubsection]{Remark}
\newtheorem*{unnumberedremark}{Remark}
\newtheorem*{lemma*}{Lemma}
\newtheorem*{prop*}{Proposition}
\newtheorem*{theorem*}{Theorem}
\newtheorem{definition*}{Definition}

%Spacing%

\newcommand\coloneq{\mathrel{\mathop:}\mkern-1.2mu=}

%%%%%%%
% Notation %
%%%%%%%

%Rings%

\newcommand\N  { \ensuremath {\mathbb{N}}}
\newcommand\Z  { \ensuremath {\mathbb{Z}}}
\newcommand\A  { \ensuremath {\mathbb{A}}}
\newcommand\Zl  { \ensuremath {\mathbb{Z}_\ell}}

\newcommand\Q  { \ensuremath {\mathbb{Q}}}
\newcommand\Ql  { \ensuremath {\mathbb{Q}_\ell}}
\newcommand\R  {\ensuremath {\mathbb{R}}}
\newcommand\C  { \ensuremath {\mathbb{C}}}

%Lattices%
\newcommand\EEightLattice  { \ensuremath {\mathbb{E}_8}}
\newcommand\HyperbolicPlaneLattice  { \ensuremath {\mathbb{H}_2}}

%AG%

\newcommand\Spec { \ensuremath {\textup{Spec}}}
\newcommand\Pic {\ensuremath {\textup{Pic}}}
\newcommand\PicardLattice[2] {\ensuremath {{\textup{Pic}_{#1}}}}

%Schemes

\newcommand{\RelativeAbelianVariety}{A}
\newcommand{\UniversalRelativeAbelianVariety}[1]{\RelativeAbelianVariety_{#1}}
\newcommand{\AbelianVariety}{A}

%Categorical%
\newcommand\End  { \ensuremath {\textup{End}}}

\newcommand\Tr  { \ensuremath {\textup{Tr}}}

\newcommand\et  {\textup{\'et}}

%Classical NT%
\newcommand\GaloisExtension[2]  { \ensuremath {\textup{Gal}(#1/#2)}}

\newcommand\AbsoluteGaloisGroup[1]  { \ensuremath {\textup{Gal}(\overline{#1}/#1)}}
\newcommand{\Places}{S}
\newcommand{\PlacesOfExtension}{\Extension{\Places}}
\newcommand\Extension[1]{\ensuremath{#1'}}

%Algebraic Groups%

\newcommand\SOZ{\ensuremath{{\mathcal{G}_{\mathcal{L}}}}}
\newcommand\SOQ{{G_{\QLattice}}}

%Symbols%

\newcommand{\wh} {\ensuremath{\widehat}}

\newcommand{\ra} {\ensuremath{\rightarrow}}
\newcommand{\ot} {\ensuremath{\otimes}}

%Text

%Diagrams%
\newcommand\btk  { \ensuremath {\begin{tikzcd}}}
\newcommand\etk  { \ensuremath {\end{tikzcd}}}
\newcommand{\ar} {\ensuremath{\arrow}}

%Algebra%
\newcommand\Ring  { \ensuremath {R}}
\newcommand\Lattice  { \ensuremath {\Lambda}}
\newcommand\ZLattice  { \ensuremath {{\mathcal{L}}}}
\newcommand\QLattice  { \ensuremath {L}}
\newcommand\ZQuadraticForm  { \ensuremath {\mathcal{ Q}}}

\newcommand\NumberField  { \ensuremath {K}}
\newcommand\ExtensionOfNumberField  { \ensuremath {\Extension{\NumberField}}}
\newcommand\field{\ensuremath{k}}

%Shimura Varieties%
\newcommand\GSpinZ{\ensuremath{\mathcal{GS}\textup{pin}}}
\newcommand\GSpinQ{\textup{GSpin}}
\newcommand\GSpZ{\ensuremath{\mathcal{GS}\textup{p}}}
\newcommand\GSpQ{\textup{GSp}} 
\newcommand\gencliff{\ensuremath{\mathcal{C}}}
\newcommand\gencliffmod{\ensuremath{\mathcal{W}}}

\newcommand{\Shim}[3] {\ensuremath {\textup{Sh}_{#2}}(#1, #3)}
\newcommand{\ShimZ}[3]{\ensuremath{\mathscr{S}_{#2}}(#1, #3)_{(p)}}
\newcommand{\Sh}[1]{\ensuremath {\textup{Sh}_{#1}}}

\newcommand{\KSQ}{\ensuremath{{i_\Q^{KS}}}}

%Misc%
\newcommand{\locsys}[1]{\ensuremath{{(#1)^{shf}}}}
%Paper specific macros%
\newcommand\kthreelat {\ensuremath{\mathcal{L}_{K3}}}
\newcommand\abspic {\ensuremath{N}}
\newcommand\co {\ensuremath{\mathbb{K}}}
\newcommand\cosp {\ensuremath{\mathbb{{K}'}}}
\newcommand\de  { \ensuremath {d}}
\newcommand\ktmod{\ensuremath{\mathcal{F}}}
\newcommand\ktmodde[1] {\ensuremath{\mathcal{F}_{#1}}}
\newcommand\ktmoddedettriv[1]{{\ensuremath{{\tilde{\mathcal{F}}_{#1}}}}}

\newcommand\ktmodderel[2] {\ensuremath{\mathcal{F}_{#1,#2}}}
\newcommand\ktmoddecorel[3] {\ensuremath{\mathcal{F}_{#1,#2,#3}}}

\newcommand\ktmoddecoQ[2]{\ensuremath{{F_{#1,#2}}}}

\newcommand\shafsetavpol[5] {\ensuremath{{\mathcal{A}_{#4,#5}(\mathscr{O}_{{#2},{#3}})}}}
\newcommand\shafsetav[4] {\ensuremath{{\textup{Shaf}_{#1}({#2},{#3},#4)}}}
\newcommand\shafsetkthreepol[4] {\ensuremath{{\ktmod_{#4}({#2},{#3})}}}
\newcommand\shafsetkthree[3] {\ensuremath{{\textup{Shaf}_{#1}({#2},{#3})}}}

\newcommand\ktsurf {\ensuremath{X}}
\newcommand\ktsurfext {\ensuremath{{X'}}}
\newcommand\ktsp {\ensuremath{\mathcal{X}}}
\newcommand\ktspext {\ensuremath{{\mathcal{X}'}}}
\newcommand\etaleisometries {\ensuremath{I}}
\newcommand\canbun {\ensuremath{\Omega^2}}
\newcommand\trivialbun {\ensuremath{\mathscr{O}}}

\newcommand\uktsurf {\ensuremath{\mathcal{X}}}
\newcommand\symsp {\ensuremath{\Omega}}
\newcommand\gspsymsp {\ensuremath{{\Omega'}}}

\newcommand\pol {\lambda}
\newcommand\polext {{\lambda'}}

\newcommand\geopt {{\overline{s}}}
\newcommand\lstr{\alpha}

\newcommand\GLZ{\mathcal{GL}}
\newcommand\GL{\textup{GL}}
\newcommand{\place}{\ensuremath{{\nu}}}

\newcommand\intmodel[2]{\mathscr{S}_{#1}(#2)_{(p)}}

\newcommand\isom{\ensuremath{\phi}}

\newcommand\discbound{\ensuremath{D}}

\newcommand\disc{\ensuremath{\textup{disc}}}

\newcommand\basesch{\mathcal{S}}

\newcommand\testsch{T}
\newcommand\ShafKthree[2]{\textup{Shaf}_{\textup{K3}}(#1,#2)}

\setcounter{tocdepth}{1}

\begin{document}

\title{The unpolarized Shafarevich Conjecture\\ for K3 Surfaces}
\author{Yiwei She}
\date{\today}
\maketitle

%+Contents
\tableofcontents
\subsection*{Acknowledgements} I thank Matthew Emerton for suggesting this problem and advising me in graduate school. I thank Keerthi Madapusi Pera for helpful conversations especially concerning the results of his papers which are used extensively.  I thank Brian Conrad for helpful and patient discussions.

\section{Introduction} %There are two families of surfaces with Kodaira dimension zero\colon  abelian varieties and  K3  surfaces.  Both are natural higher dimensional generalizations of elliptic curves.  This raises the natural question of whether the classical finiteness theorems for elliptic curves with good reduction away from $\Places$ hold for these families.  Faltings proved this finiteness theorem for polarized abelian varieties of a fixed degree in ~\cite{Fal84} and Zarhin strengthened this theorem to remove the condition on degree in ~\cite{Zar85}.  Andre proved the polarized Shafarevich conjecture for K3 surfaces building on the work of Faltings and Kuga-Satake.  
We prove the unpolarized Shafarevich conjecture for K3 surfaces:  the set of isomorphism classes of K3 surfaces over a fixed number field $\NumberField$ with good reduction away from a fixed and finite set of places $\Places$ is finite. Our proof is based on the theorems of Faltings and Andr\'e, as well as the Kuga-Satake construction.

\subsection{Finiteness theorems}
Faltings ~\cite[Thm 6]{Fal84} proved the (polarized) Shafarevich conjecture for abelian varieties over number fields: 

\begin{theorem}[Faltings] \label{faltings} Let $\NumberField$ be a number field, $\Places$ be a  fixed finite set of places of $\NumberField$, and $n, \de \in \mathbb{N}$. Then the set% \margin{change notation to be consistent with $\ShafKthree$ } 
\[\shafsetavpol{AV}{\NumberField}{\Places}{n}{\de} \coloneq \left\{A/\NumberField \left| \begin{array}{l} (A,\pol)\textup{ is a polarized abelian variety which} \\ \textup{has dimension $n$,} \\ \textup{admits good reduction outside $\Places$,}\\ \textup{and $\deg \pol  = \de$.} \end{array} \right.\right\} /\sim_\NumberField\] is finite.
\end{theorem}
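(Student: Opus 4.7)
The plan is to follow Faltings's original two-step approach, reducing to two separate finiteness statements: (I) the set of $\NumberField$-isogeny classes of abelian varieties of dimension $n$ with good reduction outside $\Places$ is finite; (II) each such isogeny class contains only finitely many isomorphism classes of polarized pairs $(A,\pol)$ of polarization degree $\de$.

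For (I), I would work with the $\ell$-adic Tate module $V_\ell(A) = T_\ell(A) \otimes_{\Zl} \Ql$. Good reduction outside $\Places$ forces the continuous representation of $\AbsoluteGaloisGroup{\NumberField}$ on $V_\ell(A)$ to be unramified outside $\Places \cup \{v : v \mid \ell\}$, and the polarization places it inside $\GSpQ_{2n}(\Ql)$. A Hermite--Minkowski-type argument shows that mod-$\ell$ representations of bounded dimension with bounded ramification take only finitely many values up to isomorphism; combined with Faltings's isogeny theorem (the Tate conjecture for abelian varieties over number fields), which asserts that the $\NumberField$-isogeny class of $A$ is recovered from $V_\ell(A)$ as a Galois module, this bounds the number of isogeny classes. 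Characteristic polynomials of Frobenius at unramified primes, which are integer-valued with bounded coefficients (Weil bounds), provide the concrete invariants one uses to organize the count.

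For (II), I would introduce the Faltings height $h_{\mathrm{Fal}}(A)$ and combine two ingredients: (a) a Northcott-type finiteness statement — for fixed $\NumberField$, $n$, $\de$, only finitely many polarized pairs $(A,\pol)$ satisfy $h_{\mathrm{Fal}}(A) \leq H$ for any given bound $H$; and (b) the Faltings heights are bounded within a single isogeny class. Property (b) is the technical heart of Faltings's argument: the change of height under an isogeny $A \to A/G$ is expressed in terms of the finite flat subgroup scheme $G$, and an unbounded sequence of heights in one isogeny class would produce infinitely many distinct $G$'s of increasing order inside the $p$-divisible group of $A$, contradicting Raynaud's finiteness theorem for finite flat group schemes of bounded order.

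The main obstacle I anticipate is the height bound (b), which rests on the full machinery of $p$-divisible groups and $p$-adic Hodge theory developed in Faltings's paper (the comparison between Faltings heights before and after isogeny, and the analysis of semistable Galois representations). The isogeny theorem used in (I) is of course also deep, but once it is granted, passing from representation-finiteness to isogeny-class-finiteness is a comparatively clean application of Hermite--Minkowski and Chebotarev density.
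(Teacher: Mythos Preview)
The paper does not supply a proof of this statement: Theorem~\ref{faltings} is quoted as a known result of Faltings with a bare citation to \cite[Thm~6]{Fal84}, and is used later only as a black box (see Proposition~\ref{restatedfaltings}, whose proof again just invokes \cite{Fal84} and \cite[18.1]{Mil86}). So there is no argument in the paper to compare your proposal against.

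That said, your outline is a faithful sketch of Faltings's original strategy in \cite{Fal84}: the split into (I) finiteness of isogeny classes via semisimplicity of $V_\ell$ and the isogeny theorem together with Hermite--Minkowski, and (II) finiteness within an isogeny class via boundedness of the Faltings height and Northcott. Your identification of the hard point as the height-change formula under isogeny and the input from Raynaud/Tate on finite flat group schemes is accurate. If anything, step (I) as you phrase it slightly understates the work: one also needs Faltings's semisimplicity theorem for $V_\ell(A)$ (not just the isogeny theorem) to pass from ``finitely many characteristic polynomials of Frobenius'' to ``finitely many Galois modules up to isomorphism,'' and the semisimplicity theorem itself is proved in \cite{Fal84} \emph{using} the height machinery, so (I) and (II) are not logically independent in the way your two-step presentation might suggest. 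But as a high-level map of the proof this is correct and matches what the cited reference does.
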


Polarized abelian varieties of any degree exist and are parametrized by a countably infinite set of moduli spaces. This leaves open the question of finiteness across  polarizations of all degrees $\de$.  More precisely, is the union %over all degrees of polarization, of the set of isomorphism classes
\begin{equation}\label{unpolarized}  \shafsetav{AV}{\NumberField}{\Places}{n} \coloneq \bigcup_\de \shafsetavpol{AV}{\NumberField}{\Places}{n}{\de}\end{equation}
finite?    In ~\cite{Zar85}, Zarhin proved that the answer is yes: 

\begin{theorem}[Zarhin]\label{zar}
The set of isomorphism classes of abelian varieties over $\NumberField$, of dimension $n$, and having good reduction away from $\Places $ is finite; i.e. the set (\ref{unpolarized}) is finite.
\end{theorem}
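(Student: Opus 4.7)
My plan is to reduce Theorem \ref{zar} to the polarized Theorem \ref{faltings} via Zarhin's trick. The essential input, which is a purely algebraic statement, is that for every abelian variety $A/\NumberField$ the eightfold product $(A \times A^\vee)^4$ admits a canonical \emph{principal} polarization. This is built from the natural Weil pairing between $A$ and $A^\vee$ together with a specific integer $4 \times 4$ matrix coming from the quaternionic structure on $\Z^4$; the integrality of this matrix is exactly what guarantees the resulting polarization is principal rather than just an isogeny. Granting this, the assignment $A \mapsto (A \times A^\vee)^4$ sends $\shafsetav{AV}{\NumberField}{\Places}{n}$ into $\shafsetavpol{AV}{\NumberField}{\Places}{8n}{1}$, and Theorem \ref{faltings} shows the target is finite.

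It remains to prove that this assignment has finite fibres, i.e.\ that a fixed abelian variety $B/\NumberField$ has only finitely many direct factors up to $\NumberField$-isomorphism. By Poincar\'e complete reducibility, $B$ is $\NumberField$-isogenous to $\prod B_i^{n_i}$ for pairwise non-isogenous simple $B_i$, and any direct factor of $B$ is isogenous to some sub-product $\prod B_i^{m_i}$ with $m_i \leq n_i$; there are only finitely many such sub-multisets, so only finitely many isogeny classes of direct factors can arise. To pass from isogeny classes to isomorphism classes, I would invoke Faltings' companion finiteness theorem asserting that each $\NumberField$-isogeny class of abelian varieties over a number field contains only finitely many $\NumberField$-isomorphism classes, which is itself a consequence of the semisimplicity of the $\ell$-adic Tate module. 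Combining these two facts bounds the fibre over $B$.

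Assembling the two steps yields Theorem \ref{zar}: every $A \in \shafsetav{AV}{\NumberField}{\Places}{n}$ maps to one of finitely many principally polarized abelian varieties, and each such target admits only finitely many preimages. The main obstacle in this plan is the construction of the principal polarization on $(A \times A^\vee)^4$: although elementary, this is not formal and requires both the quaternionic integrality and a careful check that the induced homomorphism $(A \times A^\vee)^4 \to ((A \times A^\vee)^4)^\vee$ is an isomorphism rather than merely an isogeny. Everything else is clean once Theorem \ref{faltings} and Faltings' isogeny-class finiteness are taken as black boxes.
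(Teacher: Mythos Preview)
The paper does not supply its own proof of Theorem~\ref{zar}; it simply attributes the result to Zarhin and cites \cite{Zar85}. Your outline is exactly Zarhin's original argument (Zarhin's trick producing a principal polarization on $(A\times A^\vee)^4$, combined with Faltings' finiteness within an isogeny class to control the fibres), so the proposal is correct and matches the source the paper invokes.
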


Using the Kuga-Satake map (Section \ref{ks}) Andr\'e  ~\cite[Thm. 1.3.1]{And96} derived the analogue of Faltings' theorem for polarized K3 surfaces:

\begin{theorem}[Andre]\label{andre}
Let $\NumberField$ be a number field, $\Places$ a fixed finite set of its places, $\de\in \N$. The set  % K3 surfaces defined on $\NumberField$ which are $\de$ polarized is finite. 
\[\shafsetkthreepol{K3}{\NumberField}{\Places}{\de} \coloneq \left\{X/\NumberField \left| \begin{array}{l} (X,\pol)\textup{ a K3 surface which}  \\ \textup{ admits good reduction outside $\Places$}\\ \textup{ and $\deg \pol = \de$.} \end{array} \right.\right\}/\sim_\NumberField\]
is finite.
\end{theorem}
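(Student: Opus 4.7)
The plan is to use the Kuga-Satake construction (Section \ref{ks}) to reduce Theorem \ref{andre} to Faltings' theorem (Theorem \ref{faltings}). To a polarized K3 surface $(\ktsurf,\pol)$ of degree $\de$ over $\NumberField$, this construction attaches an abelian variety $A = KS(\ktsurf,\pol)$ of fixed dimension (independent of $(\ktsurf,\pol)$ in our family), equipped with a canonical polarization, an action of the even Clifford algebra of the primitive K3 lattice in degree $\de$, and a spin level structure; all of these auxiliary data depend only on $\de$, not on $\ktsurf$.

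First I would observe that, after replacing $\NumberField$ by a finite extension $\ExtensionOfNumberField$ of bounded degree (needed to trivialize the spin level structure attached to the K3 lattice), the Kuga-Satake variety $A$ is defined over $\ExtensionOfNumberField$. The crucial arithmetic input, due to Andr\'e and sharpened by Madapusi Pera, is that good reduction of $\ktsurf$ at a place $\place \notin \Places$ of residue characteristic $\ne 2$ implies good reduction of $A$ at every place of $\ExtensionOfNumberField$ above $\place$. Consequently, as $(\ktsurf,\pol)$ ranges over $\shafsetkthreepol{K3}{\NumberField}{\Places}{\de}$, the associated Kuga-Satake varieties form a set of polarized abelian varieties over $\ExtensionOfNumberField$ of fixed dimension, fixed polarization degree, and good reduction outside a fixed finite set $\PlacesOfExtension$ of places of $\ExtensionOfNumberField$ (obtained by enlarging $\Places$ to include places of residue characteristic $2$ and then base-changing). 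Theorem \ref{faltings} then yields that this set of Kuga-Satake varieties is finite.

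It remains to show that the map $(\ktsurf,\pol) \mapsto A$ has finite fibers. The mechanism is that the primitive transcendental cohomology of $\ktsurf$ is functorially recovered as a Hodge/Galois sub-object of $\End(A)$, so an isomorphism of the Kuga-Satake data produces a Hodge-isometry of primitive cohomology. Deligne's theorem that the Kuga-Satake correspondence is absolutely Hodge promotes this to a Galois-equivariant isometry, after which the global Torelli theorem for K3 surfaces bounds the fiber by finitely many $\overline{\NumberField}$-isomorphism classes of K3 surfaces; descending back to $\NumberField$ only multiplies the count by the finite number of $\NumberField$-forms with the prescribed good reduction, controlled by a standard Galois cohomology argument on the finite automorphism group.

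The main obstacle is this fiber-finiteness step, which is conceptually the heart of Andr\'e's argument. Faltings yields finiteness of the image, but producing an isomorphism of K3 surfaces from an isomorphism of their Kuga-Satake abelian varieties requires a precise comparison between the motivic and Hodge-theoretic pictures, and in particular relies on the Kuga-Satake correspondence being absolutely Hodge so that the Galois-equivariant isometry of transcendental lattices extracted from $A$ is compatible with the algebraic structure of $\ktsurf$. By comparison, the construction of $\ExtensionOfNumberField$ and the verification of good reduction away from residue characteristic $2$ are technical but considerably less conceptually delicate.
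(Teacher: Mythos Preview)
The paper does not give its own proof of this statement: Theorem~\ref{andre} is stated as a known result of Andr\'e, with citation to \cite[Thm.~1.3.1]{And96}, and is used as a black box in the proof of the main theorem (specifically in Corollary~\ref{mainprop}). So there is no paper proof to compare against.

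That said, your sketch is a faithful outline of Andr\'e's original argument and is correct in broad strokes. One remark on the fiber-finiteness step, which you flag as the main obstacle: the machinery assembled in Section~\ref{ks} of this paper (for the unpolarized theorem) actually gives a cleaner route than the one you describe. Rather than recovering the primitive cohomology as a sub-object of $\End(A)$, invoking absolutely Hodge, and then applying Torelli pointwise, one can work directly at the level of moduli: the Torelli map $\ktmoddecoQ{\de}{\co_\de} \to \Shim{\SOQ_\de}{\co_\de}{\symsp_\de}$ is an open immersion (Proposition~\ref{torelli}), and the Kuga-Satake map $\Shim{\SOQ_\de}{\co_\de}{\symsp_\de} \to \Shim{\GSpQ}{\cosp}{\gspsymsp}$ is a morphism of Shimura varieties which is finite-to-one on $\NumberField$-points once level structures are in place. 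Thus the composite $(\ktsurf,\pol,\lstr) \mapsto (A^{KS},\pol^{KS},\lstr^{KS})$ has finite fibers essentially by construction, and the appeal to absolutely Hodge cycles is not needed for this particular finiteness. Your version is closer to Andr\'e's 1996 presentation; the moduli-theoretic version is what the paper's infrastructure would supply if one unwound it for fixed $\de$.
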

\begin{remark} By \emph{good reduction} of a K3 surface $\ktsurf$ (resp.  polarized K3 surface $(\ktsurf,\pol)$) at a place $\place \in \NumberField$ we mean that there exists a K3 space $\ktsp$ (resp. polarized K3 space $(\ktsp,\pol)$) over $\mathscr{O}_{\NumberField,\place}$ (see Defs. \ref{k3definition} and \ref{k3poldefinition}) with generic fiber $\ktsurf$ (resp. $(\ktsp,\pol)$).%  By \emph{good reduction} of a (primitively) polarized K3 surface $(\ktsurf, \pol)$, we mean that there exists a (primitively) polarized K3 space over $\mathscr{O}_{\NumberField,\place}$ (see Def. \ref{k3poldefinition}).
\end{remark}
In this paper we prove the unpolarized Shafarevich conjecture.  This is the analogue of Zarhin's unpolarized finiteness theorem for K3 surfaces.

\begin{theorem}[Main theorem]\label{main} Let $\NumberField$ be a number field and  $\Places$ a fixed finite set of places of $\NumberField$.  The set 

\begin{equation}\label{unpolarizedk3}  \shafsetkthree{K3}{\NumberField}{\Places} \coloneq \bigcup_\de \shafsetkthreepol{K3}{\NumberField}{\Places}{\de}\end{equation}
of isomorphism classes of  K3  surfaces defined over $\NumberField$ with good reduction away from $\Places $ is finite.  
\end{theorem}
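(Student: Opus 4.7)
The plan is to reduce the theorem to Andr\'e's Theorem \ref{andre} by exhibiting, uniformly in $X$ ranging over the set \eqref{unpolarizedk3}, a polarization $\pol$ on $X$ whose degree $\deg \pol$ is bounded by a constant $\de_0 = \de_0(\NumberField, \Places)$ depending only on the number field and the set of places. Once such a uniform bound is established, the main theorem follows immediately from the inclusion $\shafsetkthree{K3}{\NumberField}{\Places} \subseteq \bigcup_{\de \le \de_0} \shafsetkthreepol{K3}{\NumberField}{\Places}{\de}$ together with Andr\'e's theorem applied to each term in this finite union.

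To extract $\de_0$, I would combine the Kuga-Satake construction with Zarhin's unpolarized finiteness Theorem \ref{zar}. Given $X \in \shafsetkthree{K3}{\NumberField}{\Places}$, pick any polarization $\pol$ of $X$ and form the associated Kuga-Satake abelian variety $\mathrm{KS}(X,\pol)$, defined over some finite extension $\ExtensionOfNumberField$ of $\NumberField$. The first substep is to show, using Madapusi Pera's construction of smooth integral canonical models of the GSpin and orthogonal Shimura varieties together with the integral extension of the Kuga-Satake morphism, that both the extension $\ExtensionOfNumberField/\NumberField$ and the set of bad places $\PlacesOfExtension$ of $\mathrm{KS}(X,\pol)$ may be taken to depend only on $\NumberField$ and $\Places$, and not on $X$ or $\pol$. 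Once this uniformity is in place, Zarhin's theorem forces $\mathrm{KS}(X,\pol)$ into a finite set of isomorphism classes of abelian varieties over $\ExtensionOfNumberField$ of dimension $2^{19}$ with good reduction outside $\PlacesOfExtension$.

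The remaining step is to convert this finiteness into a bound on the degree of some polarization of $X$. The Kuga-Satake construction supplies, for each prime $\ell$, a Galois-equivariant embedding of $H^2_{\et}(X_{\overline{\NumberField}}, \Q_\ell)(1)$ into $\End_{\Q_\ell}(H^1(\mathrm{KS}(X,\pol)_{\overline{\ExtensionOfNumberField}}, \Q_\ell))$, so the Galois representation on $H^2(X)$ lies in a finite set as $X$ varies. Invoking the Tate conjecture for K3 surfaces over number fields---a theorem due to Tankeev, Andr\'e, and in full generality Madapusi Pera---one recovers the N\'eron-Severi lattice of $X_{\overline{\NumberField}}$ as a Galois module from the $\ell$-adic data, and in particular deduces that its rank and discriminant take only finitely many values. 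Standard reduction theory for indefinite integral lattices then produces, in each such Picard lattice, an ample class of self-intersection bounded in terms of the rank and discriminant alone, yielding the required $\de_0$.

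The principal obstacle is the last step: passing from finiteness of Galois representations on $H^2$ to the existence of an integral ample class of bounded norm. Even with the Galois representation fixed, the Picard lattice is determined only up to its genus by the $\ell$-adic data at each prime, so one must ensure that within each possible global isometry class the ample cone contains a class of controlled size. Carrying this out uniformly in $X$, while keeping $\ExtensionOfNumberField$ and $\PlacesOfExtension$ independent of $X$, will require a delicate integration of Madapusi Pera's integral Kuga-Satake morphism with Kneser-type finiteness results for indefinite quadratic lattices.
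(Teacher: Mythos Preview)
Your overall strategy---bound the minimal polarization degree by controlling $\Pic_\ktsurf$ via Kuga--Satake plus a Faltings-type finiteness theorem, then invoke Andr\'e---is exactly the paper's. But the gap is not where you locate it. The passage from a finite list of Picard lattices to a polarization-degree bound is unproblematic: an argument of Ogus (Proposition~\ref{poldependsonlattice}) produces an ample class of degree depending only on the abstract lattice, and each genus contains only finitely many isometry classes, so the worries in your final paragraph dissolve once the discriminant of $\Pic_\ktsurf$ is actually bounded.

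The real gap is the discriminant bound itself. From the classical Kuga--Satake map and Zarhin's theorem you place $A^{KS}$ in a finite set, and hence $H^2_{\et}(\ktsurf,\Q_\ell)$ in a finite set of $\Q_\ell$-representations, as a subrepresentation of the semisimple $\End(H^1(A_i,\Q_\ell))$. But this is only rational information. The projector cutting out $\ZLattice_{\de}$ inside $\End(C(\ZLattice_\de))$ requires inverting $\disc(\ZLattice_\de)=2\de$ (see the proof of Lemma~\ref{kugasatakelattice}), so at primes $\ell\mid\de$ you cannot recover $P^2_{\et}(\ktsurf,\Z_\ell)$ integrally from $H^1(A_i,\Z_\ell)$---and $\de$ is precisely the quantity you are trying to bound. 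The paper closes this with the \emph{uniform} Kuga--Satake map of Section~\ref{ks}: every $\ZLattice_\de$ is primitively embedded, via Lagrange's four-square theorem (Lemma~\ref{inclusions}), into the fixed \emph{unimodular} lattice $\ZLattice=\EEightLattice^2\oplus\HyperbolicPlaneLattice^2\oplus\langle 1\rangle^5$, so that all the $\ktmoddecoQ{\de}{\co_\de}$ map to a \emph{single} Siegel variety (whence Faltings' polarized theorem already suffices and Zarhin is unnecessary). Unimodularity of $\ZLattice$ makes the embedding $\ZLattice_{\widehat\Z}\hookrightarrow\End_{C(\ZLattice)}(H^1(A_i,\widehat\Z))$ integral at every prime (Lemma~\ref{kugasatakelattice}), and since $P^2_{\et}(\ktsurf,\widehat\Z)^\perp\subset\ZLattice_{\widehat\Z}$ is Galois-trivial (Proposition~\ref{latticeembedding}), one recovers $T(\ktsurf)\otimes\widehat\Z$ as a $\widehat\Z$-\emph{lattice}---not merely its rationalization---from finitely many data, bounding $\disc(T(\ktsurf))=\disc(\Pic_\ktsurf)$.
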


\section{K3 surfaces} 
\subsection{Definitions} We review the basic theory and definitions for K3  surfaces, closely following ~\cite{Riz06}.  

\begin{definition}\cite[1.1.1, 1.1.5]{Riz06} \label{k3definition} \begin{enumerate}
\item Let $\field$ be a field. A non-singular proper surface $\ktsurf/\field$ is a \emph{K3 surface} if $\canbun_{\ktsurf/\field} \simeq \trivialbun_{\ktsurf/\field}$ and $H^1(\ktsurf,\trivialbun_{\ktsurf/\field}) = 0$.

\item Let $\basesch$ be a scheme.  A \emph{K3 scheme} over $\basesch$ is a scheme $\ktsp $, with a proper smooth morphism $\pi \colon  \ktsp\to\basesch$ whose geometric fibers are K3 surfaces.  

\item Let $\basesch$ be a scheme.  A \emph{K3 space}  is an algebraic space $\ktsp $ with a proper smooth morphism  $\pi\colon  \ktsp \rightarrow \basesch$ and an etale cover (of schemes) % an algebraic space with a separated, quasi finite, locally of finite type map to a scheme is a scheme, see olsson notes 17.8 or knutson 
$\basesch'\rightarrow \basesch$  such that $\pi'  \colon  \ktspext = \ktsp\times_\basesch \basesch' \rightarrow \basesch'$ is a K3 scheme.
\end{enumerate}
\end{definition}
%\[\]
%\[
%  \begin{array}{ccccccccc}
%    &&&& 1 &&&& \\
%    &&& 0 && 0 &&& \\
%    &&1 && 20 && 1&& \\
%    &&& 0 && 0 &&&  \\
%    &&&& 1 &&&& \\
%  \end{array}
%\]

To prove the main theorem, we will study the moduli spaces  $\ktmod_{\de,\Z}$ (see \ref{k3moduli}) of polarized K3 surfaces as well as some invariants of the second cohomology of the K3 surfaces.  First we review properties of $H^2$ and the Picard lattice. 
\begin{definition} An \emph{$\Ring$-lattice} is a free $\Ring$-module with a symmetric bilinear pairing. Maps of $\Ring$-lattices are the maps of $\Ring$-modules compatible with the pairing. We mean $\Z$-lattice when $R$ is omitted.
\end{definition} %We will end the section by constructing the relevant moduli spaces.  

%By theorems of Severi and Ner\'on, $NS(\variety)$ is a finitely generated abelian group. % milne etale cohomology 3.25 
%The rank of $NS(\variety)$ is called the Picard rank of $\variety$ and denoted by $\rho(\variety)$.  

\subsection{Cohomology of K3 surfaces} Let $\ktsurf/\field$ be a K3 surface.  When $\field\subset \C$, the Betti numbers of $\ktsurf (\C)$ are  computed by Noether's formula:  $(b_0 , b_1,b_2,b_3,b_4) = (1,0,22,0,1)$. % Let  $ \field =\C $ so that betti cohomology is defined for $\ktsurf/\field$.  
There is a perfect pairing given by the intersection product $\cup$ on $H^2(X,\Z) \simeq \Z^{22}$, which has been computed to be \[(H^2(\ktsurf,\Z),\cup) =-(\EEightLattice^2 \oplus\HyperbolicPlaneLattice^3)\] see ~\cite[12.3]{Sha96}.  By the $-$ sign, we mean negating the pairing, i.e. $-\HyperbolicPlaneLattice$ is the rank 2 hyperbolic lattice with pairing given by $\left(\begin{smallmatrix} 0&-1\\ -1 & 0 \end{smallmatrix}\right) $.
%Here $\EEightLattice$ is the root lattice of the exceptional lie group, and 
 We will refer to $\EEightLattice^2\oplus \HyperbolicPlaneLattice^3$ as the \emph{K3 lattice} and denote it by $\kthreelat $.
%There is a perfect pairing on $H^2(\ktsurf,\Z)$ given by intersection.  This lattice, i.e. the rank 22 free $\Z$-module with pairing, is isomorphic to $\EEightLattice^2\oplus \HyperbolicPlaneLatticeperbolicPlaneLattice^3$.  

%By the Artin comparison theorems, we have isomorphisms of lattices $H^2(\variety_\C, \Z )\otimes \Z_l \simeq H^2_{\et} (\variety, \Zl)$.  This isomorphism is compatible with motivic structures in that it restricts to isomorphisms of lattice $c(NS(\variety))_{\Zl} \simeq c_\ell (NS(\variety)_\ell)$ and  $c (T(\variety))\otimes \Zl  \simeq c_\ell(T(\variety)\otimes\Zl ) $. 

When  $\ktsurf/\field$ is a proper scheme over a field, the relative Picard functor $\Pic_{\ktsurf/\field}$ is representable by a  scheme, locally of finite type over $\field$ ~\cite[8.2, Thm 3]{BLR90}.   When $\ktsurf/\field$ is a K3 surface, this scheme is separated, smooth, 0-dimensional over $\field$, and the torsion subgroup $\Pic^{\tau}_{\ktsurf/\field} $ is trivial \cite[3.1.2]{Riz06}.

\begin{definition} Let  $\ktsurf/\field$ be a K3 surface.  We define the \emph{Picard lattice} of $\ktsurf$, denoted $\PicardLattice{\ktsurf}{}$, to be the abelian group $ \Pic_{\ktsurf/\field}(\field)$. The pairing is given by the intersection form.
\end{definition}
Note that $\PicardLattice{\ktsurf}{}$  contains the absolute Picard group $\Pic(X)$ as a finite index subgroup ~\cite[2.3.1]{And96}. 
%The Picard lattice is a sublattice of the  $\ell$-adic  realizations of the cohomology as follows. 
For any abelian group $A$, we denote by $A_\ell$ the base change $A\otimes_\Z \Zl$, e.g. we have $(\PicardLattice{\ktsurf}{})_\ell =\PicardLattice{\ktsurf}{}\otimes \Zl$. 

For each $\ell$, the $\ell$-adic chern map defines an embedding of lattices
\[ %\label{ellchernmap} 
c_\ell\colon   (  \PicardLattice{\ktsurf}{})_\ell \rightarrow H^2_\et(\ktsurf_{\overline \NumberField} , \Zl(1)) ^{\AbsoluteGaloisGroup{\NumberField}}
\]

If  $\field\subset \C$, then the analytic chern map defines an embedding of lattices
\[c\colon  \PicardLattice{\ktsurf}{}\rightarrow H^2(\ktsurf(\C) ,\Z(1)) \cap H^{1,1} (\ktsurf(\C),\C)\]

%We will bound the discriminant of the Picard lattice for each prime $\ell$ using the Tate conjecture\colon 
The Tate conjecture computes the image of the $\ell$-adic chern maps in the case when $k$ is a number field.  Although the first proof in the literature is due to Andr\'e (see \cite{And96}), we give a citation to a paper with a more precise statement.
\begin{theorem}[The Tate Conjecture for K3 surfaces {\cite[5.6]{Tat94}}]\label{tate} Let $\ktsurf/\NumberField$ be a  K3  surface over a number field. Then the map 
\[c_\ell \otimes \Ql\colon   (\PicardLattice{\ktsurf}{})_\ell \otimes_{\Zl} \Ql \rightarrow H^2_\et(\ktsurf_{\bar\NumberField} , \Ql(1)) ^{\AbsoluteGaloisGroup{\NumberField}}\]
is surjective (and is therefore an isomorphism).  
\end{theorem}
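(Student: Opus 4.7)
My plan is to follow Andr\'e's approach, reducing the statement to Faltings' Tate conjecture for endomorphisms of abelian varieties via the Kuga-Satake construction, combined with the Lefschetz $(1,1)$ theorem. Since every K3 surface over a number field is projective, after enlarging $\NumberField$ by a finite extension if necessary (harmless, as this only enlarges the invariant subspace on the right-hand side while leaving the geometric Picard lattice unchanged), I fix a polarization $\pol$ on $\ktsurf$. The Kuga-Satake construction then attaches to $(\ktsurf,\pol)$ an abelian variety $A$ over $\NumberField$ together with a canonical $\AbsoluteGaloisGroup{\NumberField}$-equivariant embedding
\[
\kappa_\ell\colon P^2_\et(\ktsurf_{\overline \NumberField},\Ql(1)) \hookrightarrow \End_{C^+}\bigl(H^1_\et(A_{\overline \NumberField},\Ql)\bigr),
\]
where $P^2$ denotes the primitive cohomology (the orthogonal complement of $c_\ell(\pol)$) and $C^+$ is the even Clifford algebra acting canonically on the Kuga-Satake cohomology. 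Crucially, under any complex embedding $\NumberField\hookrightarrow\C$, $\kappa_\ell$ matches, through the Artin comparison isomorphism, with a Hodge-theoretic version that is compatible with the Hodge decomposition.

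Next I would invoke Faltings' theorem for abelian varieties in the form $\End(A)\otimes\Ql \xrightarrow{\sim} \End_{\AbsoluteGaloisGroup{\NumberField}}(H^1_\et(A_{\overline \NumberField},\Ql))$. Restricting to the $C^+$-commuting part and intersecting with the image of $\kappa_\ell$, every Galois-invariant primitive class in $H^2_\et(\ktsurf_{\overline \NumberField},\Ql(1))^{\AbsoluteGaloisGroup{\NumberField}}$ is represented by a $\Ql$-linear combination of endomorphisms of $A$. Such endomorphisms are algebraic and therefore define Hodge classes on $A$; by the Hodge-compatibility of $\kappa_\ell$, the corresponding primitive class in $H^2(\ktsurf(\C),\Q(1))$ lies in $H^{1,1}(\ktsurf(\C),\C)$.

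Finally, the Lefschetz $(1,1)$ theorem applied to $\ktsurf(\C)$ implies that every such rational $(1,1)$-Hodge class is an algebraic divisor class, hence lies in the image of the analytic Chern map $c$. Since the Picard functor is representable, this divisor class descends to $\ktsurf_{\overline \NumberField}$ and produces an element of $(\PicardLattice{\ktsurf}{})_\ell\otimes\Ql$ mapping via $c_\ell\otimes\Ql$ to the given Tate class (after adding back the obvious contribution of $c_\ell(\pol)$ to account for the non-primitive direction). This establishes surjectivity.

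The principal obstacle is the existence of $\kappa_\ell$ together with its simultaneous Galois- and Hodge-equivariance: this is the technical heart of Kuga-Satake theory, requiring a Shimura-variety-theoretic construction of the Kuga-Satake abelian scheme and an explicit identification of its $\ell$-adic and Betti $H^1$ with spin/Clifford representations. Once these compatibilities are in hand (and will be invoked again in Section \ref{ks}), the deduction from Faltings' theorem and Lefschetz $(1,1)$ is largely formal, with the only other technical point being the descent of the resulting divisor from $\C$ back to $\overline{\NumberField}$.
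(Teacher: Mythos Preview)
The paper does not supply its own proof of this statement; it is quoted as a known theorem, with the first proof attributed to Andr\'e \cite{And96} and the formulation cited from \cite{Tat94}. Your outline is precisely Andr\'e's argument---Kuga--Satake plus Faltings' isogeny theorem plus Lefschetz $(1,1)$---and is correct in its essentials.

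Two small points deserve an extra sentence if you write this out. First, in this paper $\PicardLattice{\ktsurf}{}$ means $\Pic_{\ktsurf/\NumberField}(\NumberField)$, not the geometric Picard group, so both sides of the Tate map change under a finite extension; the descent back to $\NumberField$ works because $c_\ell$ is injective and $\AbsoluteGaloisGroup{\NumberField}$-equivariant on $\Pic_{\ktsurf_{\overline\NumberField}}\otimes\Ql$, not merely because the geometric lattice is unchanged. Second, the step from ``$\kappa_\ell(\alpha)\in\End(A)\otimes\Ql$'' to ``the corresponding Betti class is a rational $(1,1)$-class'' uses that $\kappa_B\bigl(P^2(\ktsurf(\C),\Q)\bigr)$ and $\End_{C^+}(A)\otimes\Q$ are both $\Q$-subspaces of $\End_{C^+}\bigl(H^1(A(\C),\Q)\bigr)$, so their $\Ql$-intersection is the $\Ql$-extension of a $\Q$-intersection; this is where one recovers an honest $\Q$-Hodge class to which Lefschetz $(1,1)$ applies.
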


\begin{lemma} %Our definition of the Picard lattice $\Pic_{\ktsurf/\field}(\field)$ is different from the Picard group $\Pic(X)$. 
The images of the Picard lattice $\PicardLattice{\ktsurf}{}$  in  $H^2(\ktsurf(\C),\Z(1))$ and $H^2_\et(\ktsurf_{\overline{\NumberField}},\widehat{\Z}(1))$ under the respective chern maps are saturated. \end{lemma}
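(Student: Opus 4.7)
My plan is to prove saturation in each case by exhibiting the image of the Picard group as the kernel of a map into a torsion-free target, which follows from the standard sheaf-theoretic computation of Chern classes.

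In the complex setting, I would invoke the exponential short exact sequence of analytic sheaves
\[
0 \to \Z(1) \to \StructureSheaf_{\ktsurf} \xrightarrow{\exp} \StructureSheaf_{\ktsurf}^\times \to 0
\]
on $\ktsurf(\C)$. The associated long exact sequence contains
\[
\Pic(\ktsurf) \xrightarrow{c} H^2(\ktsurf(\C),\Z(1)) \to H^2(\ktsurf, \StructureSheaf_{\ktsurf}),
\]
and for a complex K3 surface $\PicardLattice{\ktsurf}{} = \Pic(\ktsurf)$ since $\Pic^0_{\ktsurf/\C}$ vanishes. The cokernel of $c$ therefore injects into a $\C$-vector space, hence is torsion-free, which is exactly saturation.

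In the étale setting, I would run the analogous argument with the Kummer sequence
\[
0 \to \mu_{\ell^n} \to \Gm \xrightarrow{[\ell^n]} \Gm \to 0
\]
on the étale site of $\ktsurf_{\overline{\NumberField}}$ for each prime $\ell$ and each $n \ge 1$. This produces
\[
0 \to \Pic(\ktsurf_{\overline{\NumberField}})/\ell^n \to H^2_\et(\ktsurf_{\overline{\NumberField}}, \mu_{\ell^n}) \to \Br(\ktsurf_{\overline{\NumberField}})[\ell^n] \to 0,
\]
and passing to the inverse limit (the Picard group is finitely generated, so the terms on the left form a Mittag--Leffler system) yields
\[
0 \to \Pic(\ktsurf_{\overline{\NumberField}}) \otimes \Zl \to H^2_\et(\ktsurf_{\overline{\NumberField}}, \Zl(1)) \to T_\ell \Br(\ktsurf_{\overline{\NumberField}}) \to 0.
\]
Since the Tate module of any abelian group is torsion-free as a $\Zl$-module, the image of $\Pic(\ktsurf_{\overline{\NumberField}})\otimes\Zl$ is saturated in $H^2_\et$.

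To descend from $\Pic(\ktsurf_{\overline{\NumberField}})$ to $\PicardLattice{\ktsurf}{}=\Pic_{\ktsurf/\NumberField}(\NumberField)=\Pic(\ktsurf_{\overline{\NumberField}})^{\AbsoluteGaloisGroup{\NumberField}}$, I observe that in characteristic zero $\Pic(\ktsurf_{\overline{\NumberField}})$ is torsion-free, so its Galois invariants are saturated inside it (if $nv$ is $\AbsoluteGaloisGroup{\NumberField}$-invariant for $n>0$, then $n(\sigma v - v) = 0$ forces $\sigma v = v$). This property is preserved by $\otimes \Zl$ using flatness, and saturation is transitive in the obvious sense (the exact sequence $0 \to B/A \to C/A \to C/B \to 0$ shows $C/A$ is torsion-free whenever $B/A$ and $C/B$ are). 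Combining with the saturation of $\Pic(\ktsurf_{\overline{\NumberField}}) \otimes \Zl$ in $H^2_\et$, and assembling across all $\ell$, gives the claim for $\widehat{\Z}$-coefficients. I expect the only real technical step to be the passage to the inverse limit in the Kummer argument together with the torsion-freeness of the Tate module; the descent from $\Pic(\ktsurf_{\overline{\NumberField}})$ to $\PicardLattice{\ktsurf}{}$ is then purely formal.
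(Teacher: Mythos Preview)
Your approach is essentially the paper's: reduce to the geometric Picard group via the identification $\PicardLattice{\ktsurf}{} = \Pic(\ktsurf_{\overline{\NumberField}})^{\AbsoluteGaloisGroup{\NumberField}}$ together with the torsion-freeness of $\Pic(\ktsurf_{\overline{\NumberField}})$, then use that the geometric Picard group is saturated in cohomology. The paper treats the latter as a known fact, while you supply the exponential- and Kummer-sequence arguments explicitly; the descent step and the transitivity of saturation are identical.

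One small point to tidy up: in your Betti paragraph you write ``for a complex K3 surface $\PicardLattice{\ktsurf}{} = \Pic(\ktsurf)$,'' but here $\ktsurf$ is defined over $\NumberField$, and $\PicardLattice{\ktsurf}{} = \Pic_{\ktsurf/\NumberField}(\NumberField)$ is in general strictly smaller than $\Pic(\ktsurf_\C)$. The exponential sequence only yields saturation of $\Pic(\ktsurf_\C) = \Pic(\ktsurf_{\overline{\NumberField}})$ in $H^2(\ktsurf(\C),\Z(1))$; you then need exactly the same Galois-descent step you wrote out for the \'etale case (invariants in a torsion-free module are saturated, and saturation is transitive) to pass down to $\PicardLattice{\ktsurf}{}$. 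Once you apply that step to both the Betti and \'etale settings, your proof is complete and matches the paper's.
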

\begin{proof}The saturation of $\Pic_\ktsurf = \Pic_{\ktsurf/\NumberField}(\NumberField)$ in $H^2(\ktsurf(\C),\Z(1))$ follows from the fact that $\Pic_{\ktsurf_{\overline{\NumberField}}/\overline{\NumberField}} (\overline {\NumberField})$ is saturated in $H^2_{\et}(\ktsurf_{\overline{\NumberField}},\widehat{\Z} (1))$ and  the equalities \begin{align*} 
\Pic_{\ktsurf/\NumberField}(\NumberField) = \Pic_{\ktsurf/\NumberField} (\overline \NumberField)^{\AbsoluteGaloisGroup{\NumberField}} = \Pic(\ktsurf_{\overline{\NumberField}})^{\AbsoluteGaloisGroup{\NumberField} }  =  \Pic_{\ktsurf_{\overline{\NumberField}}/\overline\NumberField} (\overline \NumberField)^{\AbsoluteGaloisGroup{\NumberField} } .\end{align*} All the equalities in the above line result from the fact  $\Pic_{\ktsurf/\NumberField}$ and $\Pic_{\ktsurf_{\overline{\NumberField}}/\overline\NumberField}$ are representable functors.  The last term is clearly saturated in $ \Pic_{\ktsurf_{\overline{\NumberField}}/\overline{\NumberField}} (\overline {\NumberField})$.  \end{proof}

By the Artin comparison theorems, we have an isomorphism $H^2(\ktsurf(\C), \Z(1))\otimes\widehat{\Z}=  H^2_{\et} (\ktsurf, \widehat{\Z}(1))$ as $\widehat{\Z}$-lattices. This isomorphism is compatible with the chern maps and  intersection forms in that it restricts to isomorphisms of $\Zl$-lattices $(c(\PicardLattice{\ktsurf}{}))_{ \ell} \simeq  (c_\ell(\PicardLattice{\ktsurf}{}))_\ell$ for all $\ell$.

\subsection{Moduli of  K3  surfaces} \label{k3moduli}
We recall the construction and basic properties of moduli spaces of K3 surfaces following \cite{Riz06}.  

\begin{definition}\cite[3.2]{Riz06}
\begin{enumerate}
\item Let $\field$ be a field.   A \emph{polarization} on a K3 surface $\ktsurf/\field$  is a global section $\pol \in\Pic_{\ktsurf/\field}(\field)$ which over $\overline k$ is the class of an ample line bundle.  A polarization $\pol$ is \emph{primitive} if it is not a  non-trivial multiple of another  line bundle over $\overline{k}$.

\item Let $\basesch$ be a scheme and let  $\pi\colon \ktsp \rightarrow \basesch$ be a  K3 space.  A polarization on $\ktsp$ is  a global section $\lambda \in \Pic_{\ktsp/\basesch}(\basesch)$ such that for every geometric point $\geopt$ of $\basesch$, the section $\lambda_{\geopt} \in\Pic_{X_\geopt / \kappa(\geopt) }  (\kappa(\geopt))$ is a polarization of $\ktsurf_{\geopt}$.  A polarization $\pol$ on $\ktsp/\basesch$ is \emph{primitive} if $\pol_\geopt$ is primitive for every geometric point $\geopt$ on $\basesch$.
\end{enumerate}
\end{definition}

\begin{definition}\cite[4.3]{Riz06} \label{k3poldefinition} Fix $\de$ a natural number and let $R$ be a $\Z$-algebra. Let $\ktmodderel{\de}{R}$ be the \emph{moduli of degree $\de$-primitively polarized K3 surfaces}  defined by:  
\[\ktmodderel{\de}{R} (\basesch) = \left\{(\pi\colon \ktsp\rightarrow \basesch,\pol)\left| \begin{array}{l}\textup{$\basesch$ is an $R$-scheme, $\pi\colon \ktsp\rightarrow \basesch$ is a K3 space}\\ \textup{and $\lambda$ is a primitive degree $\de $ polarization on $\ktsp$}\end{array}\right.\right\}\]
\end{definition}

\begin{prop}[{\cite[4.3.3]{Riz06}}] The functor $\ktmod_{\de,\Z}$ % and $\ktmod_{2\de,\co_\de,\Z[\frac{1}{6}]}$ 
is a separated Deligne-Mumford stack of finite type over $\Z$.
\end{prop}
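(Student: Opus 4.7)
The plan is to realize $\ktmodderel{\de}{\Z}$ as a quotient, by a linear algebraic group action, of a locally closed subscheme of a Hilbert scheme.

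First I would establish a uniform effective very-ampleness statement: there exists an integer $n = n(\de)$, depending only on $\de$, such that for every primitively degree-$\de$ polarized K3 surface $(\ktsurf,\pol)$ over any field, the line bundle $\pol^{\otimes n}$ is very ample with vanishing higher cohomology. In characteristic zero this is Saint-Donat's theorem (with $n=3$), and the analogous bound in arbitrary characteristic is available from the literature on K3 surfaces. This fixes a Hilbert polynomial $P(t)$ and an integer $N$ with $N+1 = h^0(\ktsurf,\pol^{\otimes n})$ that are independent of the choice of polarized K3 surface.

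Next, consider the Hilbert scheme $H \coloneq \text{Hilb}^{P}_{\ProjectiveSpace^N_\Z/\Z}$, which is projective and hence of finite type and separated over $\Z$. Inside $H$, let $H'$ be the locus of closed subschemes $\ktsurf \subset \ProjectiveSpace^N$ which are smooth K3 surfaces such that $\mathscr{O}_\ktsurf(1)$ is the $n$-th tensor power of a primitive degree-$\de$ polarization. The conditions of smoothness, of $\canbun_{\ktsurf} \simeq \trivialbun_\ktsurf$, of $H^1(\ktsurf,\trivialbun_\ktsurf) = 0$, of primitivity of the underlying polarization, and of $\mathscr{O}(1)$ being a suitable tensor power are all locally closed by standard semicontinuity and base-change arguments applied to the universal family over $H$. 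Two embeddings of the same $(\ktsurf,\pol^{\otimes n})$ into $\ProjectiveSpace^N$ differ by an element of $\mathrm{PGL}_{N+1}$, which yields an isomorphism of stacks $\ktmodderel{\de}{\Z} \simeq [H'/\mathrm{PGL}_{N+1}]$. Since both $H'$ and $\mathrm{PGL}_{N+1}$ are of finite type over $\Z$, the quotient stack is of finite type over $\Z$.

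For the Deligne--Mumford property, it suffices to verify that each geometric automorphism group $\Aut(\ktsurf,\pol)$ is a finite and reduced (hence étale) group scheme. Finiteness is the classical theorem of Matsusaka--Mumford on automorphisms of polarized smooth projective varieties. Reducedness follows from the vanishing $H^0(\ktsurf,T_{\ktsurf}) = 0$: this group is the tangent space to $\Aut(\ktsurf,\pol)$ at the identity, and it vanishes since $T_{\ktsurf} \simeq \Omega^1_{\ktsurf}$ by triviality of the canonical bundle, while $h^{1,0}(\ktsurf) = 0$ for a K3 surface.

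Separatedness is checked by the valuative criterion: for a discrete valuation ring $R$ with fraction field $K$, an isomorphism of the polarized generic fibers of two polarized K3 spaces over $R$ extends uniquely to $\Spec R$ by a Matsusaka--Mumford-type uniqueness theorem for models of polarized varieties (the two families embed into a common $\ProjectiveSpace^N_R$ via the $n$-th power of the polarization, and the generic isomorphism extends by properness of the Hilbert scheme). The main technical obstacle in the plan is the uniform effective very-ampleness in the first step across all residue characteristics, especially at small primes; this is precisely the input from the literature on moduli of K3 surfaces addressed by Rizov.
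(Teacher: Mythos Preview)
The paper does not give its own proof of this proposition; it simply records the statement and cites Rizov \cite[4.3.3]{Riz06}. Your outline is essentially the argument Rizov carries out: one uses a uniform very-ampleness bound for $\pol^{\otimes n}$ to embed all degree-$\de$ polarized K3 surfaces into a fixed projective space, carves out the relevant locally closed locus $H'$ inside a Hilbert scheme, and presents the moduli stack as $[H'/\mathrm{PGL}_{N+1}]$; the Deligne--Mumford property then comes from $H^0(\ktsurf,T_\ktsurf)=0$ together with finiteness of $\Aut(\ktsurf,\pol)$, and separatedness from Matsusaka--Mumford. So there is nothing to compare against in the present paper, and your plan matches the cited source.

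One small point worth tightening in your sketch: the moduli problem is phrased in terms of K3 \emph{spaces} (algebraic spaces over $\basesch$, not necessarily schemes), and the polarization is a section of the relative Picard functor rather than an honest line bundle. You should remark that \'etale-locally on $\basesch$ the polarization lifts to a genuine line bundle and the family becomes a scheme, so that the identification with $[H'/\mathrm{PGL}_{N+1}]$ really holds after \'etale sheafification; this is routine but is where the algebraic-space formulation of the moduli problem actually enters.
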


To represent this moduli problem with schemes, we will need level structures.% for K3 surfaces.  %Let $(\pi\colon \ktsp\rightarrow \basesch,\pol)$ be a primitively degree $\de$ polarized K3 space over a connected base $\basesch$. 

\begin{definition}\label{discriminantkernel} Let $\ZLattice$ be a lattice over $\Z$.  The \emph{discriminant kernel} of $\ZLattice$ is the maximal open compact  subgroup of $\textup{SO}_\ZLattice(\widehat{\Z})$ which acts trivially on $(\ZLattice_{\widehat{\Z}})^\vee/\ZLattice_{\widehat{\Z}} = \ZLattice^\vee/\ZLattice$.
\end{definition}

For a polarized K3 surface $(\ktsurf,\pol)$ over $\NumberField$, there is an explicit description of the discriminant kernel of the primitive lattice.   Recall the K3 lattice \[\kthreelat = \EEightLattice^2\oplus\HyperbolicPlaneLattice^3.\] We fix a direct summand $\HyperbolicPlaneLattice\subset \kthreelat$ and a symplectic basis $e,f$ for $\HyperbolicPlaneLattice$ (i.e. $e^2=f^2 =0$ and  $ef = fe = 1$).  Let $v_\de = e - \de f$, a primitive vector of degree $2\de$.  The Betti realization  $(P^2(\ktsurf,\Z) , -\cup)$ of the primitive cohomology is isomorphic to $v_d^\perp \subset \kthreelat$.  Let $\ZLattice_\de$ be the $\Z$-lattice: 
\[\ZLattice_\de = \EEightLattice^2 \oplus \HyperbolicPlaneLattice^2\oplus \langle 2\de\rangle \simeq v_\de^\perp\]   
Let $ \textup{SO}_{\kthreelat}$ denote the  $\Z$-algebraic group of isometries of $\kthreelat$.  The discriminant kernel of $P^2(\ktsurf,\Z)$ can be identified with the subgroup of $g\in \textup{SO}_{\kthreelat}(\widehat{\Z})$ which fixes $v_\de$.   
%In \cite[5.3]{Riz06}, subgroups of the discriminant kernel are called \emph{admissible}  (for more details see \cite[3.10]{MP15}). 

\begin{definition}Let $R$ be a $\Z$-algebra. Let $\etaleisometries_\de$ be the sheaf on the \'etale site of a K3 space $\pi\colon \ktsp \rightarrow\basesch$ % \rightarrow \ktmod_{\de,R} $, 
with sections given by
\[\etaleisometries(\basesch) = \left\{ \lstr\colon  \kthreelat\otimes \wh{\Z} \simeq R^2_{\et}\pi_* (\ktsp,\wh\Z(1)) \left| \begin{array}{l}\alpha \textup{ is an isometry such that }\\  \lstr(v_\de) = c_{\wh\Z}(\pol) \end{array}\right.\right\}.\]
%where $R^2\pi _*\colon  \ktsp \rightarrow \basesch$ is the right derived functor on the etale site?
 Here $c_{\widehat{\Z}}$ is the product of the $\ell$-adic chern maps $c_\ell$. Let $\co_\de $ be a compact open subgroup of the discriminant kernel of $\ZLattice_\de$.  Precomposition defines a $\co_\de$ action on  $\etaleisometries_\de(\basesch)$.   A \emph{$\co_\de$-level structure} on a  K3  space $(\pi\colon \ktsp\rightarrow \basesch,\pol)$ is a section $\lstr \in H^0_{\et}(\basesch,  \etaleisometries_\de/\co_\de)$.% of this sheaf.  note that \etaleisometries_\de/co_\de is sheafified, so it will have sections even where \etaleisometries_\de doesn't, this is where level structures on X/K come from, i.e. even where there is no isomorphism L_\hat{Z} \isom H^2_et(X,\hat{Z}) since the RHS isnt the usual etale cohomology H^2_et(X_\bar{K},\hat{Z}); to get a nonempty set of level structures\colon  you need to extend K to K' so that absGal(K') acts trivially on H^2(X, z/4)
\end{definition}

\begin{definition} Let $\de>0$ and $\co_\de $ be a compact open subset of the discriminant kernel of $\ZLattice_\de$.  The \emph{moduli functor  $\ktmod_{\de,\co_\de,R}$ of degree $\de$-primitively polarized K3 surfaces with level $\co_\de$ structure} is defined by
\[\ktmoddecorel{\de}{\co_\de}{R} (\basesch) = \left\{(\pi\colon \ktsp\rightarrow \basesch,\pol)\left| \begin{array}{l}\text{$\basesch$ is an $R$-scheme, $\pi\colon \ktsp\rightarrow \basesch$ is a K3 space,}\\ \text{$\lambda$ is a primitive degree $\de $ polarization on $\ktsp$,} \\ \text{and $\lstr$ is a $\co_\de$-level structure on $\ktsp$}\end{array}\right.\right\}\]
\end{definition}

\begin{prop}\cite[2.4.3]{Riz10}
\label{representable}
For a torsion free compact open $\co_\de$ of the discriminant kernel, the stack $\ktmoddecoQ{\de}{\co_\de}\coloneq \ktmoddecorel{\de}{\co_\de}{\Q} $ is representable by a quasi-projective scheme over $\Q$.
\end{prop}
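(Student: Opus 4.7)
The plan is to upgrade the stack $\ktmoddecorel{\de}{\co_\de}{\Q}$ to an algebraic space by exhibiting trivial automorphism groups, and then to secure quasi-projectivity by embedding it into a Shimura variety of orthogonal type via the period map. Both ingredients are standard, and both will be re-used when we set up the Kuga-Satake morphism in the next section.

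For the first step, let $\phi$ be an automorphism of a test object $(\pi\colon \ktsp \to \basesch, \pol, \lstr)$. At every geometric point $\geopt$ of $\basesch$, $\phi_\geopt$ lies in the finite group $\Aut(\ktsp_\geopt, \pol_\geopt)$ and acts on $R^2\pi_*\wh\Z(1)_\geopt = H^2_\et(\ktsp_\geopt, \wh\Z(1))$ through this finite image. Since $\phi$ preserves $\lstr$ modulo $\co_\de$, conjugating by $\lstr$ identifies the induced action of $\phi$ with an element of $\co_\de$; being torsion inside a torsion-free group, this element must be trivial. Faithfulness of the representation $\Aut(\ktsp_\geopt, \pol_\geopt) \hookrightarrow O(H^2)$ (a classical consequence of Torelli for complex K3 surfaces, which transfers to characteristic $0$ via a choice of embedding $\overline{\Q}\hookrightarrow \C$) then forces $\phi_\geopt = \Id$, and separatedness of the automorphism group scheme gives $\phi = \Id$ on all of $\basesch$.

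The forgetful map $\ktmoddecorel{\de}{\co_\de}{\Q} \to \ktmodderel{\de}{\Q}$ is representable, since the level sheaf $\etaleisometries_\de/\co_\de$ on a K3 family is a torsor for a finite group which is \'etale-locally trivializable, hence parameterized by a finite \'etale scheme. With trivial automorphisms in hand, $\ktmoddecorel{\de}{\co_\de}{\Q}$ is therefore a separated algebraic space of finite type over $\Q$. For quasi-projectivity one exhibits the period morphism
\[\ktmoddecorel{\de}{\co_\de}{\Q} \longrightarrow \Shim{\textup{SO}_{\ZLattice_\de}}{\co_\de}{\symsp}\]
into the connected orthogonal Shimura variety with level $\co_\de$; the target is a quasi-projective $\Q$-variety of the standard type (reflex field $\Q$ in this signature), and the K3 Torelli theorem together with local Torelli implies the period map is a locally closed immersion. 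This realizes $\ktmoddecoQ{\de}{\co_\de}$ as a locally closed subscheme of a quasi-projective scheme.

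The main obstacle is this last step: the passage from DM stack to separated algebraic space is formal once automorphisms vanish, but verifying that the period map descends to $\Q$, is representable by schemes, and is a locally closed immersion requires genuine Hodge-theoretic input and global Torelli. An alternative would be to quote Viehweg's general quasi-projectivity theorem for moduli of polarized varieties with trivial canonical class, but the Shimura-variety route is preferred because it dovetails with the Kuga-Satake apparatus used throughout the paper.
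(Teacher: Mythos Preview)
The paper does not supply its own proof here; the proposition is quoted directly from Rizov \cite[2.4.3]{Riz10} and used as a black box. Your sketch is a faithful outline of Rizov's argument: kill automorphisms via the torsion-free level to upgrade the Deligne--Mumford stack to a separated algebraic space, then embed into the orthogonal Shimura variety via the period map to obtain quasi-projectivity.

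One point of care in presentation: you invoke the Torelli theorem (period map is an immersion), which in this paper appears only later as Proposition~\ref{torelli} and is itself stated for schemes, so a na\"{\i}ve forward reference would be circular. In Rizov's development the two statements are proved together: one first constructs the period map at the level of stacks (or analytically over $\C$), checks it is \'etale by local Torelli and injective on geometric points by global Torelli, and only then reads off both representability and the open-immersion property simultaneously. Your sketch implicitly follows this order, but if you write it up you should make the logical dependence explicit rather than citing Proposition~\ref{torelli} as input.
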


\begin{prop}\cite[3.11]{MP15}
\label{integralrepresentable}
For a torsion free compact open $\co_\de$ of the discriminant kernel, the stack $\ktmoddecorel{\de}{\co_\de}{\Z_{(p)}} $ is representable by an algebraic space over $\Z_{(p)}$.  For $p\not | \,\, \de$, the space $\ktmoddecorel{\de}{\co_\de}{\Z_{(p)}}$ is smooth over $\Z_{(p)}$.
\end{prop}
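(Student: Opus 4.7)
The plan proceeds in three parts: first establish trivial automorphisms, then apply Artin's criteria to get representability as an algebraic space over $\Z_{(p)}$, and finally deduce smoothness when $p\nmid\de$ from classical K3 deformation theory.

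For the automorphism analysis, any automorphism $\phi$ of a triple $(\ktsurf,\pol,\lstr)$ acts on $R^2\pi_*\wh\Z(1)$ by an element of $\mathrm{SO}_{\kthreelat}(\wh\Z)$ fixing $v_\de$ (since it preserves $\pol$) and stabilizing the level structure, so the induced element lies in $\co_\de$. The group $\Aut(\ktsurf,\pol)$ is finite (classical for polarized K3 surfaces), hence its image in $\co_\de$ is a finite subgroup; torsion-freeness of $\co_\de$ forces this image to be trivial, and the faithfulness of the cohomological action on polarized K3 surfaces forces $\phi$ itself to be trivial. Consequently $\ktmoddecorel{\de}{\co_\de}{\Z_{(p)}}$ has trivial inertia and is a sheaf.

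With trivial inertia in hand, I would apply Artin's representability theorem to obtain an algebraic space over $\Z_{(p)}$. The functor is limit-preserving by standard spreading-out for proper smooth morphisms and algebraic spaces. Schlessinger's conditions follow from the cotangent-complex control of deformations of a K3 space together with its polarization and the étale sheaf $\etaleisometries_\de/\co_\de$. Effectivity of formal deformations reduces, via the étale cover in Definition \ref{k3definition}(3), to Grothendieck's formal existence theorem for the underlying projective K3 schemes (using that a primitively polarized K3 surface is projective). Openness of versality is deferred to and supplied by the smoothness step below.

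For smoothness when $p\nmid\de$, one uses that a K3 surface $\ktsurf$ over an algebraically closed field of characteristic $p$ satisfies $H^0(\ktsurf,T_\ktsurf)=0$ (no nonzero vector fields), hence by Serre duality with $\canbun_\ktsurf\simeq\trivialbun_\ktsurf$ also $H^2(\ktsurf,T_\ktsurf)=0$, so absolute K3 deformations are unobstructed of dimension $h^1(T_\ktsurf)=20$. The obstruction to deforming $\pol$ along a square-zero thickening lies in $H^2(\ktsurf,\trivialbun_\ktsurf)\simeq\field$ and is given by cup product with $c_1(\pol)$; when $p\nmid\de$, the self-intersection $\pol^2=2\de$ is a unit in $\field$, so this cup-product map $H^1(\ktsurf,T_\ktsurf)\to H^2(\ktsurf,\trivialbun_\ktsurf)$ is surjective and its kernel, the space of polarized deformations, is smooth of dimension $19$. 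The level structure, being étale, contributes no further obstructions.

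The main obstacle is verifying the Artin criteria, and particularly openness of versality, integrally rather than just generically in positive characteristic. Madapusi Pera circumvents this by realizing $\ktmoddecorel{\de}{\co_\de}{\Z_{(p)}}$ as an étale cover of a suitable open subscheme of the integral canonical model of the GSpin Shimura variety attached to $\ZLattice_\de$, whose smooth integral structure is constructed independently (via Kisin's work on abelian-type Shimura varieties). This effectively transfers the representability and smoothness verifications from K3 surfaces, where they are delicate at small primes, to the abelian-type setting where the Kuga--Satake correspondence makes them accessible.
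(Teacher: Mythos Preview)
The paper gives no proof of this proposition; it is quoted from \cite[3.11]{MP15}. Your outline is a reasonable self-contained argument, but it is heavier than necessary and your closing description of Madapusi Pera's method is not quite right.

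Since the paper already records (from \cite[4.3.3]{Riz06}) that $\ktmod_{\de,\Z}$ is a separated Deligne--Mumford stack of finite type over $\Z$, and the forgetful map $\ktmoddecorel{\de}{\co_\de}{\Z_{(p)}} \to \ktmodderel{\de}{\Z_{(p)}}$ is \'etale, the source is automatically Deligne--Mumford; your inertia computation then shows it is an algebraic space. There is no need to re-verify Artin's axioms from scratch, and this shortcut is what \cite{MP15} actually uses. Your deformation-theoretic smoothness argument is correct for $p>2$, the only case the paper needs; at $p=2$ the self-intersection $\pol^2=2\de$ is never a unit, so the surjectivity of the obstruction map cannot be read off from that number alone and would require a separate argument.

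Your final paragraph inverts the logical order: in \cite{MP15} representability of $\ktmoddecorel{\de}{\co_\de}{\Z_{(p)}}$ is established first, directly as above, and only afterwards is the integral Torelli map to the Shimura-variety model constructed as an open immersion of algebraic spaces (Proposition~\ref{torelli} here)---not an \'etale cover, and not as a device for proving representability.
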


\section{A uniform Kuga-Satake map}\label{ks}

We will use a Kuga-Satake construction to associate  abelian varieties to K3 surfaces in a uniform way.
The classical Kuga-Satake map ~\cite{KS67} associates a complex abelian variety $A_\ktsurf^{KS}$  to a complex polarized  K3  surface $\ktsurf$, such that there is an isomorphism of $\Z$-Hodge structures \cite[5.7]{Del72}
\[ C^+(P^2(\ktsurf,\Z ) (1)) \simeq \End_C( H^1(A^{KS}_\ktsurf,\Z ))  \hspace{1cm}\text{ \parbox{5cm}{(compatible with the Hodge filtration after tensoring with $\C$)}}\]   
%note to self, that the weights are correct; both sides are weight 0; the LHS because of the twisting and the RHS because it is equal to H1 otimes H1*
where $C$ (resp. $C^+$) is the Clifford (resp. even Clifford) algebra associated to the lattice $P^2(\ktsurf,\Z)(1)$. 

This construction was shown to be compatible in families over $\Spec(\Z[\frac{1}{2}])$; the case of $\Z[\frac{1}{2\de}]$ is due to ~\cite{Riz05}, and that of $\Z[\frac{1}{2}]$ to ~\cite{MP15}. %Thus we may consider the Kuga-Satake construction as map of moduli spaces $\ktmod_{\de}$ of K3s with extra structure to moduli spaces of  abelian varieties $\abmod_{d'}$ with extra structure, over $\Spec (\Z[\frac{1}{2}])$.  
In this section we use this construction to define a variation of the classical Kuga-Satake map over $\Z[\frac{1}{2}]$ in order to prove our main theorem.

\begin{unnumberedremark} In ~\cite{And96} and ~\cite{Del72}, each $\ktmod_{\de}$ is embedded in a different moduli space of abelian varieties (the degree of the polarization of the abelian variety increases as a function of $\de$).   The goal of this construction is to embed $\ktmod_{\de}$ in the same moduli space (of abelian varieties) as $\de$ varies.
\end{unnumberedremark}   

\subsection{Orthogonal Shimura varieties} \label{orthosv} We recall the theory of Shimura varieties for certain orthogonal groups.  For more details and general theory of Shimura varieties, see ~\cite{Del79}, ~\cite{Mil05}, and ~\cite{Moo98}.  

Let $(\ZLattice,\ZQuadraticForm)$ be a $\Z$-lattice of rank $n+2$ and signature $(n+,2-)$.  Let $\SOZ  \coloneq  \mathcal{SO}( \ZLattice, \ZQuadraticForm)$ be the algebraic group of automorphisms over $\Z$, and let $\SOQ \coloneq  (\SOZ)_\Q$.  Let $\symsp$ be the space of negative definite planes in $\ZLattice \otimes  \R$.   Then $\symsp$ is identified with the space of weight zero Hodge structures on $\ZLattice \otimes \C$ as follows (for details see \cite[4]{MP15} or ~\cite[2]{Riz10}).  For $h\in \symsp$ with basis $(e_h, f_h)$, the corresponding decomposition is given by
\[(\ZLattice\otimes \C)_h^{p,q} = \left\{ \begin{array}{ll}\langle e_h+ i f_h\rangle & (p,q)=(-1,1);\\
																							h_\C^\perp = \langle e_h,f_h\rangle ^\perp   & (p,q) = (0,0) ;\\
																							\langle e_h-if_h\rangle & (p,q) = (1,-1)  \end{array}  \right.\]

  The pair $(\SOQ,\symsp)$ is a Shimura datum of weight 0. In our application  we have $n\geq 1$ so that the group $\SOQ $ is split over $\Q$ and the pair $(\SOQ, \symsp)$ has  reflex field $E(\SOQ, \symsp) = \Q$.  % ~\cite[12.3]{Mil05}. 
   For a compact open $\co \subset \SOQ(\A_f)$, let $\Shim{\SOQ}{\co}{\symsp}$ be the canonical model associated to $(\SOQ,\symsp)$ and $\co$ over $ \Q$. We will assume $\co$ is torsion free so that the canonical model is a quasi-projective scheme.

  \begin{theorem}[{\cite[4.6]{MP15}}]  \label{integralmodel} Assume $(\ZLattice,\ZQuadraticForm)$  has cyclic discriminant of order $\de$ and let  $\co_\de$ be a subgroup of the discriminant kernel. For $p>2$, $\Shim{\SOQ}{\co_\de}{\symsp}$ admits an integral canonical model $\intmodel{\co_\de}{\QLattice} /  \Z_{(p)}$. % For a torsion free compact open $\co_\de$, $\intmodel{\co_\de}{\QLattice}$ is a quasi-projective $\Z_{(p)}$-scheme. 
\end{theorem}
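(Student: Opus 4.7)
The plan is to construct the integral canonical model indirectly, by realizing the orthogonal Shimura variety as a quotient of a subvariety of a Siegel modular variety via a Kuga-Satake type embedding, and then taking the normalization of the closure in the (well-understood) Siegel integral model. I would proceed in three stages.

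First (lifting to GSpin), the central isogeny $\GSpinQ \twoheadrightarrow \SOQ$ with kernel $\Gm$ gives a map of Shimura data $(\GSpinQ, \symsp) \to (\SOQ, \symsp)$. Let $\cosp_\de$ be the preimage of $\co_\de$; it suffices to construct an integral canonical model for $\Shim{\GSpinQ}{\cosp_\de}{\symsp}$ and then descend by quotienting by the $\Gm$-action. Second (embedding into Siegel), the even Clifford algebra $C^+(\QLattice)$ carries a left $\GSpinQ$-action and a symplectic form, yielding a closed embedding of Shimura data $(\GSpinQ, \symsp) \hookrightarrow (\GSpQ, \gspsymsp)$ for a suitable symplectic space. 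The target Shimura variety $\absch = \Shim{\GSpQ}{\cosp'}{\gspsymsp}$ is the moduli space of polarized abelian varieties with level structure, and by Mumford's construction it admits a smooth canonical integral model $\absch_{\Z_{(p)}}$ over $\Z_{(p)}$ for every $p$. Third (normalization), I would define $\intmodel{\cosp_\de}{\QLattice}$ as the normalization of the scheme-theoretic image of $\Shim{\GSpinQ}{\cosp_\de}{\symsp} \to \absch_{\Z_{(p)}}$, and then descend to obtain $\intmodel{\co_\de}{\QLattice}$.

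The heart of the matter is to verify the extension property characterizing an integral canonical model: every map from the generic fiber of a smooth $\Z_{(p)}$-scheme $T$ into $\Shim{\SOQ}{\co_\de}{\symsp}$ must extend uniquely to $T$, and the resulting model must be smooth. When $p \nmid \de$ this is the easy case, since $\SOZ$ is reductive over $\Z_{(p)}$ and the discriminant kernel is hyperspecial at $p$; one can then invoke the general results of Kisin on abelian-type Shimura varieties at hyperspecial level to obtain smoothness and the extension property essentially for free, after verifying that the GSpin Shimura datum is of Hodge type via the Clifford embedding.

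The main obstacle is the remaining case $p \mid \de$, where the orthogonal group fails to be reductive over $\Z_{(p)}$ and Kisin's theorem does not apply directly. Here the \emph{cyclic} discriminant hypothesis is essential: it permits an orthogonal decomposition of $\QLattice_{\Z_{(p)}}$ into a unimodular summand and a small (rank one) defect concentrating all the ramification at $p$. This decomposition allows one to replace $\QLattice$ by an auxiliary lattice $\QLattice^\diamond$ that is maximal at $p$, so that on an auxiliary cover the hyperspecial theory of the previous paragraph goes through and produces a smooth model for the cover. One then obtains $\intmodel{\co_\de}{\QLattice}$ by descending along a finite cover via normalization. The extension property is verified fiber-by-fiber using Faltings' extension theorem for $p$-divisible groups applied to the Kuga-Satake abelian scheme, together with Grothendieck--Messing and Serre--Tate deformation theory to compare formal deformations of the K3-type crystal on $\QLattice$ with formal deformations of the Kuga-Satake abelian variety via the Clifford construction, which is formally smooth. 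Uniqueness of the model is then formal from the extension property.
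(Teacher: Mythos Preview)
The paper does not prove this theorem at all: it is stated with the attribution \cite[4.6]{MP15} and used as a black box, with the sentence immediately following it merely pointing to \cite[4.3]{MP16} and \cite[3.3]{Moo98} for the definition of integral canonical model. There is therefore nothing in the paper to compare your proposal against.

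That said, your sketch is a reasonable high-level summary of what actually happens in the cited references. The GSpin lift, the Clifford-algebra embedding into a Siegel datum, normalization of the Zariski closure in the Siegel integral model, and the appeal to Kisin's theorem at hyperspecial level when $p\nmid\de$ are all correct ingredients. One small correction on the hard case $p\mid\de$: the mechanism in \cite{MP15} and \cite{MP16} is not quite to decompose $\ZLattice_{\Z_{(p)}}$ as ``unimodular plus rank-one defect'' and work with a maximal auxiliary lattice on the same space, but rather to \emph{primitively embed} $\ZLattice_\de$ into a strictly larger lattice that is self-dual at $p$ (indeed unimodular), construct the integral model upstairs via Kisin, and then obtain the model for $\ZLattice_\de$ by descent along the induced map of Shimura varieties. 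The cyclic-discriminant hypothesis is what guarantees such a primitive embedding exists. This is exactly the device the present paper exploits again in Section~3.3 with the explicit target $\ZLattice=\EEightLattice^2\oplus\HyperbolicPlaneLattice^2\oplus\langle 1\rangle^5$, so you have the right idea but with the arrow pointing the wrong way.
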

  
    By the torsion freeness of $\co_\de$, the functor $\Shim{\SOQ}{\co_\de}{\symsp}$ (resp. $\intmodel{\co_\de}{\QLattice}$) is a quasi-projective scheme over $\Q$ (resp. quasi-projective algebraic space over $\Z_{(p)}$).  We refer to \cite[4.3]{MP16}  and \cite[3.3]{Moo98} for the definition of integral canonical models.

\subsection{The Torelli Theorem} We now apply the definitions above to construct period domains for K3 surfaces and to state the Torelli theorem.  

Let  $(\ktsurf,\lambda)$ be a degree $2\de$-primitively polarized  K3 surface over a field $\field \subset \mathbb{C}$.  Recall the notation of \ref{k3moduli}. We fix an isomorphism $(H^2(\ktsurf, \Z),\cup) \simeq\kthreelat =\EEightLattice^2 \oplus \HyperbolicPlaneLattice^3$ and identify the two lattices (here $\cup$ is the negative of the natural intersection product,  this negation does not affect any arguments).  Let $ch(\pol) \in \kthreelat$ be the chern class of the polarization under this identification.
We fix the summand $\HyperbolicPlaneLattice\subset \kthreelat$, a basis $ e,f\in \HyperbolicPlaneLattice$ and $v_\de$ as in \ref{k3moduli}. Then there is an isometry $\isom\in \mathcal{SO}(\kthreelat)(\Z)$ so that $\isom (\text{ch}(\pol)) = e - \de f $  ~\cite[Exp. IX, \S1, Prop. 1]{BBD85} and (automatically) restricting to an isometry  \[  \isom (\pol^\perp)  = (e-\de f)^\perp \subset \kthreelat  . \]   

Let $\ZLattice_\de$ be the $\Z$-lattice defined in \ref{k3moduli},
which has signature  $(19+,2-)$.  Let $\QLattice_\de = \ZLattice_\de \ot \Q$ and denote the associated $\Q$-algebraic group of automorphisms of $\QLattice_\de$ by $\SOQ_{\de}$.  Let $\symsp_{\de}$ be the space of negative definite planes in $\ZLattice_{\de}\otimes \R$, and let  $\co_\de\subset \SOQ_{\de} ( \A_f)$ be a compact open subgroup of the discriminant kernel (see \ref{discriminantkernel}). 

We denote the Shimura variety associated to  $(\SOQ_{\de}, \symsp_{\de}) $ over $\Q$ with $\co_\de$-level structure by $\Shim{\SOQ _{\de}}{\co_\de}{\symsp_{\de}}$.  Since $\ZLattice_\de$ has cyclic discriminant $\Z/2\de\Z$, we have an integral model $\intmodel{\co_\de}{\QLattice_\de}/\Z_{(p)}$ of $\Shim{\SOQ _{\de}}{\co_\de}{\symsp_{\de}}$ for each $p>2$ by \ref{integralmodel}.% having the smooth extension property by Theorem \ref{}.  

%The goal of the rest of this section is to outline the proof of the following proposition\colon 
\begin{prop}[The Torelli theorem] \label{torelli} Let $p>2$.  Let $\co_\de\subset \SOQ_\de(\widehat{\Z})$ be a compact open subgroup of the discriminant kernel.    Assume $\co_{\de,2}$ is sufficiently small so that $\co_\de$ is torsion free, and that $\co_\de^2$ coincides with the discriminant kernel away from 2.
 Then there is an open immersion \[\iota_{\co_\de}\colon \ktmod_{\de ,\co_\de, \Z_{(p)}} \rightarrow \intmodel{\co_\de}{\ZLattice_\de}\] %In particular it is quasi-finite and has finite fibers.
\end{prop}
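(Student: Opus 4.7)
The plan is to construct $\iota_{\co_\de}$ by sending a polarized K3 surface with level structure to the Hodge structure on its primitive second cohomology, descend from $\C$ to $\Q$ using algebraicity, extend to $\Z_{(p)}$ via the extension property of integral canonical models, and finally verify that the map is an open immersion by combining the global Torelli theorem (injectivity) with local Torelli plus a dimension count (\'etaleness).

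First, I construct the map on $\C$-points. Given $(\ktsp,\pol,\lstr)$ over $\C$, the primitive cohomology $P^2(\ktsp,\Z)(1)$ carries a weight zero polarized Hodge structure of signature $(19+,2-)$, and the $(-1,1)$-line $H^{2,0}$ determines a negative definite plane in $P^2(\ktsp,\R)(1)$. The restriction of $\lstr$ to $v_\de^\perp$ supplies an isometry $\ZLattice_\de \otimes \widehat\Z \simeq c(\pol)^\perp$, and combined with a choice of Betti marking this transports the negative definite plane into $\ZLattice_\de \otimes \R$, producing a point of $\symsp_\de$ together with a class in the discriminant-kernel quotient, i.e.\ a point of $\Shim{\SOQ_\de}{\co_\de}{\symsp_\de}(\C)$. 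The construction varies holomorphically in families, and because both sides are analytifications of algebraic varieties over $\Q$ on which the period data (de Rham and \'etale primitive cohomology) are Galois-equivariant, Deligne's theory of canonical models shows that the resulting analytic morphism descends to a morphism $\iota_{\co_\de,\Q}\colon \ktmoddecoQ{\de}{\co_\de} \to \Shim{\SOQ_\de}{\co_\de}{\symsp_\de}$ of $\Q$-schemes.

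Next, I extend $\iota_{\co_\de,\Q}$ over $\Z_{(p)}$. By Proposition \ref{integralrepresentable}, $\ktmoddecorel{\de}{\co_\de}{\Z_{(p)}}$ is a smooth algebraic space over $\Z_{(p)}$ (for $p\nmid 2\de$), while by Theorem \ref{integralmodel} the target $\intmodel{\co_\de}{\ZLattice_\de}$ is an integral canonical model, which by definition satisfies the N\'eron-type extension property of Milne--Moonen: any morphism from a smooth $\Z_{(p)}$-algebraic space whose generic fibre maps into the generic fibre extends uniquely. Applying this to $\iota_{\co_\de,\Q}$ produces the desired morphism $\iota_{\co_\de}\colon \ktmoddecorel{\de}{\co_\de}{\Z_{(p)}} \to \intmodel{\co_\de}{\ZLattice_\de}$.

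Finally, I verify open immersion. On the generic fibre, injectivity on geometric points follows from the global Torelli theorem for polarized K3 surfaces (two polarized K3 surfaces with isomorphic Hodge-polarized primitive cohomology are isomorphic, and the $\co_\de$-level structure rigidifies the remaining choice of isometry); local Torelli shows that the differential of $\iota_{\co_\de,\Q}$ is an isomorphism, since Kodaira--Spencer identifies the tangent space of $\ktmoddecoQ{\de}{\co_\de}$ at $(\ktsurf,\pol,\lstr)$ with $\Hom(H^{2,0},H^{1,1}_{\mathrm{prim}})$, which is precisely the tangent space of $\symsp_\de$ at the corresponding period point. Since both sides are smooth of relative dimension $19$, this yields an open immersion over $\Q$. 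The hardest step is then upgrading this to $\Z_{(p)}$: using smoothness of both sides, it suffices to check that $\iota_{\co_\de}$ remains unramified and injective on the special fibre, which amounts to an integral comparison between the Kodaira--Spencer differential on the K3 side and the tangent map of the orthogonal Shimura variety in characteristic $p$, together with the characteristic-$p$ Torelli results of Madapusi Pera \cite{MP15}. This integral compatibility is the technical heart of the argument; everything else reduces to standard Hodge-theoretic and moduli-theoretic bookkeeping.
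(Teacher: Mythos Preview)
The paper's own proof is purely referential: it cites Piatetski-Shapiro--Shafarevich over $\C$, Rizov over $\Q$, and Madapusi Pera over $\Z_{(p)}$, with no independent argument. Your sketch therefore is not an alternative route so much as an unpacking of what those references contain, and your outline (Betti period map over $\C$, descent to $\Q$ via canonical models, extension to $\Z_{(p)}$, then global plus local Torelli for the open-immersion claim) is broadly accurate for that chain of results.

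One caveat on your Step~3: you invoke the extension property of the integral canonical model $\intmodel{\co_\de}{\ZLattice_\de}$, which requires the source $\ktmoddecorel{\de}{\co_\de}{\Z_{(p)}}$ to be smooth (or at least regular and formally smooth) over $\Z_{(p)}$. You correctly parenthesize that Proposition~\ref{integralrepresentable} gives this only for $p\nmid \de$, but the statement is asserted for all $p>2$, including $p\mid\de$. In \cite{MP15} the map and the open-immersion property at primes dividing $\de$ are not obtained by a direct appeal to the extension property; rather, Madapusi Pera embeds $\ZLattice_\de$ into an auxiliary lattice that is self-dual at $p$, works with the corresponding (smooth) integral model, and transports the result back. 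Since you already concede that the characteristic-$p$ open-immersion step ultimately rests on \cite{MP15}, this is more a matter of precision than a genuine gap in strategy; just be aware that the extension property by itself does not uniformly produce $\iota_{\co_\de}$ across all $p>2$.
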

\begin{proof} This theorem is due to Piatetski Shapiro-Shafarevich over $\C$ in \cite[]{PSS71},  Rizov  over $\Q$ in \cite[3.9.1]{Riz10}, and Madapusi-Pera over $\Z_{(p)}, p>2$ in \cite[5.15]{MP15}. %\margin{need to explain or reference neatness (no longer relevant; replaced neatness with torsion freeness everywhere)} %\margin{restate theorem? or only give one reference? (THE REFERENCES HERE ARE CORRECT!} 
\end{proof}

\subsection{Embeddings of orthogonal Shimura varieties} We will use the following unimodular $\Z$-lattice to construct the uniform Kuga-Satake construction
\[\ZLattice = \EEightLattice^2\oplus \HyperbolicPlaneLattice^2\oplus \langle 1\rangle^5\]

 Let $\SOZ= \mathcal{SO}(\ZLattice)$ be the $\Z$-algebraic group of isometries of $\ZLattice$, and let $\QLattice=\ZLattice_\Q$,  $\SOQ \coloneq  (\SOZ)_\Q$.  Fix $\co \subset \SOQ(\A_f)$ a torsion free compact open contained in the discriminant kernel.  By \ref{integralmodel}, the Shimura variety $\Shim{\SOQ}{\co}{\symsp}$ admits an integral canonical model $\intmodel{\co}{L}$ over $\Z_{(p)}$, $p>2$.   We will construct embeddings $i_d\colon \ktmod_{\de, \co_\de,\Z_{(p)} }\rightarrow\intmodel{\co}{\QLattice}$.  %a Shimura variety for $\SOQ = (\SOZ)_\Q$. 
   
   \begin{lemma} \label{inclusions} For any $\de\in \N$ there exists a primitive embedding of lattices 
 \begin{equation*}  i _d\colon \ZLattice_d = \EEightLattice^2\oplus \HyperbolicPlaneLattice^2 \oplus \langle 2\de \rangle \hookrightarrow  \EEightLattice^2\oplus \HyperbolicPlaneLattice^2\oplus \langle 1\rangle^5 =\ZLattice.
 \end{equation*}
 \end{lemma}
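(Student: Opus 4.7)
The plan is to reduce the problem to a single-rank question by using the obvious identity map on the common summand $\EEightLattice^2 \oplus \HyperbolicPlaneLattice^2$ and searching for a primitive embedding $\langle 2\de\rangle \hookrightarrow \langle 1\rangle^5$; the direct sum of these two embeddings will give the desired $i_\de$, and primitivity of the direct sum follows from primitivity of the $\langle 2\de\rangle$-factor embedding since the complementary summand is embedded by the identity.

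To embed $\langle 2\de\rangle$ primitively into $\langle 1\rangle^5$, it suffices to exhibit a vector $v = (v_1,\ldots,v_5)\in \Z^5$ with $v_1^2 + \cdots + v_5^2 = 2\de$ and $\gcd(v_1,\ldots,v_5) = 1$; primitivity of the embedding $e \mapsto v$ is equivalent to the cokernel $\Z^5/\Z v$ being torsion-free, which in turn is equivalent to $v$ being primitive in $\Z^5$. The key trick will be to extract an entry equal to $1$, which makes primitivity automatic. For any $\de\geq 1$, the integer $2\de - 1$ is a non-negative integer, and so by Lagrange's four-square theorem there exist $b_1, b_2, b_3, b_4 \in \Z$ with $b_1^2 + b_2^2 + b_3^2 + b_4^2 = 2\de - 1$. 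Then the vector $v = (1, b_1, b_2, b_3, b_4)$ has square-length $2\de$ and is primitive since its first coordinate equals $1$.

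Thus the embedding $i_\de$ will be given by $\Id_{\EEightLattice^2 \oplus \HyperbolicPlaneLattice^2} \oplus (e \mapsto v)$, where $e$ is a generator of $\langle 2\de\rangle$. The only substantive ingredient is Lagrange's four-square theorem, and the main structural observation is that splitting off the $\langle 1\rangle$ factor (instead of insisting on a four-square decomposition of $2\de$ itself) trivialises the primitivity verification and avoids any case analysis involving numbers of the form $4^a(8b+7)$ that would appear if one tried to use the three-square theorem on $2\de$ directly.
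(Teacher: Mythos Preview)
Your proof is correct and follows exactly the same approach as the paper: both reduce to primitively embedding $\langle 2\de\rangle$ into $\langle 1\rangle^5$, apply Lagrange's four-square theorem to $2\de-1$, and send the generator to $(1,b_1,b_2,b_3,b_4)$, with primitivity immediate from the leading coordinate $1$. Your write-up is in fact more explicit than the paper's about why the resulting embedding is primitive.
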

\begin{proof} It suffices to construct a primitive embedding of  $\langle 2\de \rangle$ in $\langle 1\rangle ^5$.  We can express  $2\de = 1+z^2+w^2+v^2+u^2$ by Lagrange's theorem. Hence if $v$ is a generator of $\langle 2\de \rangle$, $v \mapsto (1, z, w, v,u)$ is a primitive metric embedding of lattices.
 \end{proof}
 
Let $\Lattice_\de = \ZLattice_\de^\perp \subset \ZLattice$.  We define the closed subgroup scheme $\SOZ_{\de}$  of $\SOZ$ by its points as follows.  For a $\Z$-algebra $R$, 
\[\SOZ_{\de} ( R) \coloneq  \{g \in \SOZ(R)  \,\,\colon  \,\,g|_{(\Lattice_{\de})_R}  = \text{Id}\}.\]  Note that $(\SOZ_{\de})_\Q$ is isomorphic to $\SOQ_{\de}$ from Section 2.  
The restriction map $r_\de\colon  \SOZ_\de \rightarrow \mathcal{SO}(\ZLattice_\de)$  is defined in the obvious way by their values on the $R$ points.   

%\begin{proof} By \cite[2.10]{MP14a}, $\co_\de$ is admissible if it acts trivially on $\ZLattice_\de^\vee /\ZLattice_\de$.  The embedding $\ZLattice_\de \hookrightarrow \ZLattice$ defines a surjection $\ZLattice^\vee \twoheadrightarrow \ZLattice_\de^\vee$.  The map $\ZLattice_\de\hookrightarrow \ZLattice_\de^\vee$ factors through the isomorphism $\ZLattice \simeq \ZLattice^\vee$, i.e. \[\ZLattice_\de  \hookrightarrow \ZLattice \simeq\ZLattice^\vee \twoheadrightarrow \ZLattice_\de^\vee\]  By definition $g\in \SOZ_\de(\widehat{\Z})$ implies $g|_{\ZLattice_\de^\perp} \cong Id$.  Since $g$ is linear, and $\ZLattice_\de^\perp$ is a full rank free sublattice of $(\ZLattice_\de^\perp)^{\vee}$, we have that $g$ acts trivially on $(\ZLattice_\de^\perp)^{\vee} $, as well as the quotient $(\ZLattice_\de^\perp)^{\vee}/\ZLattice_\de^\perp$.  It follows from the isomorphism $(\ZLattice_\de^\perp)^{\vee}/\ZLattice_\de^\perp = \ZLattice_\de^\vee /\ZLattice_\de$ that $g$ acts trivially on $\ZLattice_\de^\vee /\ZLattice_\de$.
%\end{proof}

\begin{prop}[{\cite[6.1]{MP16}}] \label{orthoembedding}The inclusions  in \ref{inclusions} define maps of Hodge structures $i_{\de}\colon  \symsp_\de \ra \symsp$, so that we have an embedding of Shimura data
$i_\de\colon  (\SOQ_{\de},\symsp_{\de}) \hookrightarrow (\SOQ,\symsp)$.
Let $\co\subset \SOQ(\A_f)$ be a fixed compact open.  Then for any $\co_\de$ with $i_\de (\co_\de) \subset \co$, we have a finite and unramified map $i_{\co_\de}\colon \Shim{\SOQ_{\de}}{\co_\de}{\symsp_\de} \rightarrow \Shim{\SOQ}{\co}{\symsp}$.
\end{prop}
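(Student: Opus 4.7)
The plan is to construct the morphism in two stages: first produce the map on Hermitian symmetric domains (and hence Shimura data), then descend to Shimura varieties with level structure and verify finiteness and unramifiedness.

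First I would verify the signature compatibility. From $\ZLattice = \EEightLattice^2 \oplus \HyperbolicPlaneLattice^2 \oplus \langle 1\rangle^5$ one reads off signature $(23,2)$, while $\ZLattice_\de$ has signature $(19,2)$. Hence the orthogonal complement $\Lattice_\de = \ZLattice_\de^\perp \subset \ZLattice$ is positive definite of rank $4$. Consequently, any negative definite plane in $\ZLattice_\de \otimes \R$ is automatically a negative definite plane in $\ZLattice \otimes \R$, so the lattice embedding of \ref{inclusions} defines an injection $i_\de \colon \symsp_\de \hookrightarrow \symsp$ whose image is precisely the locus of planes orthogonal to $\Lattice_\de \otimes \R$.

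Next I would verify this is a morphism of Hodge structures. Using the explicit description from Section \ref{orthosv}, for $h \in \symsp_\de$ with basis $(e_h, f_h) \subset \ZLattice_\de \otimes \R$, the weight-zero Hodge decomposition on $\ZLattice \otimes \C$ induced by $i_\de(h)$ has the same lines of types $(\pm 1, \mp 1)$ as on $\ZLattice_\de \otimes \C$, while $\Lattice_\de \otimes \C$ sits entirely in the $(0,0)$ piece. Equivalently, the Deligne torus homomorphism $h \colon \SSS \to \SOQ_\R$ acts trivially on $\Lattice_\de \otimes \R$, hence factors through $(\SOZ_\de)_\R$. Combined with the identification $(\SOZ_\de)_\Q \simeq \SOQ_\de$ recorded in the text, this yields an embedding of Shimura data $(\SOQ_\de, \symsp_\de) \hookrightarrow (\SOQ, \symsp)$, with common reflex field $\Q$ since both groups are split.

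For the passage to Shimura varieties, given $\co_\de$ with $i_\de(\co_\de) \subset \co$, functoriality of canonical models produces a morphism $i_{\co_\de} \colon \Shim{\SOQ_\de}{\co_\de}{\symsp_\de} \to \Shim{\SOQ}{\co}{\symsp}$ defined over $\Q$; on complex points this is induced by the double coset map
\[ \SOQ_\de(\Q) \backslash (\symsp_\de \times \SOQ_\de(\A_f)/\co_\de) \longrightarrow \SOQ(\Q) \backslash (\symsp \times \SOQ(\A_f)/\co). \]
Unramifiedness is then immediate from the infinitesimal picture, since the map on complex uniformizing domains is a closed immersion of complex manifolds.

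The main obstacle is finiteness. My approach would be to combine quasi-finiteness---which reduces to showing each fiber of the double coset map is finite, using that $\SOQ_\de \hookrightarrow \SOQ$ is a closed embedding together with finiteness of class numbers for compatible arithmetic subgroups---with properness, which can be argued via the valuative criterion using again the closed embedding of reductive groups and the quasi-projectivity of both Shimura varieties. More efficiently, one can invoke \cite[6.1]{MP16}, which establishes exactly this conclusion for morphisms of orthogonal Shimura varieties arising from primitive lattice embeddings, so the entire argument can be reduced to checking the input hypotheses of that reference: the existence of the primitive embedding (\ref{inclusions}), the Hodge-theoretic compatibility worked out above, and the level-compatibility $i_\de(\co_\de) \subset \co$.
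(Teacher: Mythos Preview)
The paper does not supply a proof of this proposition at all: it is stated with the attribution \cite[6.1]{MP16} in the heading and no proof environment follows. Your proposal is therefore more detailed than the paper itself, and the argument you sketch---checking that the positive-definiteness of $\Lattice_\de$ forces negative planes in $\ZLattice_\de\otimes\R$ to remain negative in $\ZLattice\otimes\R$, that the Deligne torus acts trivially on $\Lattice_\de$, and then invoking functoriality of canonical models plus the closed-immersion property on the uniformizing domains---is correct and is essentially the content behind the cited reference. Your final remark that one can simply invoke \cite[6.1]{MP16} once the hypotheses are verified is exactly what the paper does.
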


%\begin{proof} .  \end{proof}
%A by product of the construction of $\mathscr{S}_{\co_\de}(\QLattice_\de)$ is that we have a map of integral models for $p>2$ below. %$\Shim{\SOQ_{\de}}{\co_\de}{\symsp_\de}$ 
%is that it admits a map to $\mathscr{S}_\co(\QLattice)$.  In fact, it is constructed as a subscheme of $\intmodel{\co}{\QLattice}$, see %\cite[7.2, 7.10, ]{MP14a}

The integral canonical model  $\intmodel{\co_\de}{\QLattice_\de}$ satisfies the extension property by definition.   See \cite[4.2, 4.3]{MP16} for the definition of  extension property.  By the extension property we have the next corollary.% (in fact $\intmodel{\co_\de}{\QLattice_\de}$ is defined as the resolution of a subvariety of $\intmodel{\co}{\QLattice}$).
\begin{corollary}
For each $p>2$, the map $i_{\de}$ in \ref{orthoembedding} extends to a map of integral models $ \intmodel{\co_\de}{\QLattice_\de} \rightarrow \intmodel{\co}{\QLattice} $ over $\Z_{(p)}$.
\end{corollary}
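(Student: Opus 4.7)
The plan is to invoke the defining extension property of the integral canonical model $\intmodel{\co}{\QLattice}/\Z_{(p)}$: any morphism from the generic fiber of a sufficiently regular $\Z_{(p)}$-scheme (or algebraic space) into $\Shim{\SOQ}{\co}{\symsp}$ extends uniquely to a $\Z_{(p)}$-morphism targeting $\intmodel{\co}{\QLattice}$; see \cite[4.2, 4.3]{MP16}. First, I would assemble the generic-fiber datum. The canonical identification $(\intmodel{\co_\de}{\QLattice_\de})_\Q = \Shim{\SOQ_\de}{\co_\de}{\symsp_\de}$, composed with the morphism $i_{\co_\de}$ of Proposition \ref{orthoembedding}, yields
\[ (\intmodel{\co_\de}{\QLattice_\de})_\Q \longrightarrow \Shim{\SOQ}{\co}{\symsp}. \]
This is the morphism that must be extended integrally.

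Next I would verify that $T \coloneq \intmodel{\co_\de}{\QLattice_\de}$ qualifies as a valid test object for the extension property applied to the target $\intmodel{\co}{\QLattice}$. Since $\co_\de$ is torsion free and contained in the discriminant kernel of $\ZLattice_\de$, Theorem \ref{integralmodel} produces $T$ as an integral canonical model over $\Z_{(p)}$, in particular a smooth quasi-projective algebraic space over $\Z_{(p)}$. Smoothness implies both regularity and formal smoothness over $\Z_{(p)}$, so $T$ lies in the class of test objects covered by the extension property. Feeding $T$ and the displayed generic-fiber morphism into the extension property then produces the unique $\Z_{(p)}$-morphism $\intmodel{\co_\de}{\QLattice_\de} \to \intmodel{\co}{\QLattice}$ whose restriction to the generic fiber recovers $i_{\co_\de}$, which is precisely the extension sought. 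Uniqueness is automatic.

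The argument has no real obstacle: the only thing that needs care is matching the regularity hypothesis in whichever formulation of the extension property one cites (the "healthy regular" or Kisin/Milne version), and this is immediate from the smoothness of the integral canonical model of the source. The hypothesis $p>2$ enters uniformly on both sides through Theorem \ref{integralmodel}, ensuring both integral models are available and compatible with the chosen level structures.
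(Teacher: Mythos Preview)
Your proposal is correct and follows essentially the same approach as the paper: the paper simply remarks that the integral canonical model satisfies the extension property (citing \cite[4.2, 4.3]{MP16}) and deduces the corollary immediately. Your version spells out more carefully that it is the extension property of the \emph{target} $\intmodel{\co}{\QLattice}$ that is being invoked, with the source $\intmodel{\co_\de}{\QLattice_\de}$ serving as a smooth (hence regular and formally smooth) test object, which is indeed the right way to phrase it.
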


\subsection{A uniform Kuga-Satake morphism} 
We continue the notation from the previous section, in particular $\ZLattice = \EEightLattice^2\oplus \HyperbolicPlaneLattice^2\oplus \langle 1 \rangle^5$ and $\SOQ= \mathcal{SO}(\ZLattice)_\Q$.  
  The goal of this section is to construct a family of abelian varieties over $\Shim{\SOQ}{\co}{\symsp}$, i.e. a Kuga-Satake morphism for $\Shim{\SOQ}{\co}{\symsp}$.  The inclusions $i_d\colon  \SOQ_{\de}\rightarrow \SOQ$ will then  induce Kuga-Satake morphisms for $\Shim{\SOQ_{\de}}{\co_\de}{\symsp_\de}$  and $\ktmod_{\de, \co_\de , \Q}$.  

%\begin{remark}Over $\C$ these induced maps do not construct the classical Kuga-Satake maps but isogeny powers of the classical construction.
%\end{remark} 

Let $\gencliff(\ZLattice)$ (resp. $\gencliff^+(\ZLattice)$) be the Clifford algebra (resp. even Clifford algebra) of $\ZLattice$. Let $\GSpinZ_\ZLattice$ be the spin group associated to $\ZLattice$.  This is the $\Z$-algebraic group which has $R$ points 
\[\GSpinZ_\ZLattice(\Ring)=\{g \in \gencliff^+_\Ring(\ZLattice_\Ring )^\times \,  |  \,g\ZLattice_\Ring  g^{-1}  = \ZLattice_\Ring \} .\] 
 where $\gencliff^+_\Ring(\ZLattice_\Ring)$ is the even Clifford algebra of $\ZLattice_\Ring$.  We refer to ~\cite[3]{Del72} for details  on Clifford algebras and Spin groups.

% The short exact sequence
%\[ 0 \ra \Gm \ra \GSpinZ \ra \SOZ\ra 0\] 
%defines the adjoint representation of $\GSpinZ$ on $\ZLattice$.  

We denote $\gencliff(\ZLattice)$ by $\gencliffmod$ when we consider it as its own left module. The natural inclusion $i\colon  \GSpinZ \hookrightarrow \gencliff (\ZLattice)^\times$ defines a faithful representation $\GSpinZ_\ZLattice \ra \GLZ(\gencliffmod)$.  Let $\iota\colon  \gencliff(\ZLattice) \rightarrow \gencliff(\ZLattice)$ be the canonical anti-involution of $\gencliff(\ZLattice)$ (defined on a  basis of $\gencliff(\ZLattice)$ by  $e_1\cdot \ldots \cdot e_k \mapsto e_k\cdot \ldots \cdot e_1$ where $\{e_i\}_{n+2}$ is a basis of $\ZLattice$).  
Let $a \in \gencliff(\ZLattice)$ be a nonzero fixed point of $\iota$.  We have a bilinear pairing on $\gencliffmod$ defined by
\[\phi_a \colon  \gencliffmod \otimes \gencliffmod \ra \Z \hspace{1cm} \phi_a(x,y) \colon  = \Tr(\iota (x)ya).\]   The pairing $\phi_a$  is nondegenerate, alternating and invariant (up to the norm character) under the action of $\GSpinZ$ \cite[5.5]{Riz10}). %\cite[4.2.5]{Riz05} \margin{are all references necessary?? remove one or two?}.  
Thus the map $\GSpinZ \ra \GLZ(\gencliffmod)$ factors through $ \GSpZ\coloneq \GSpZ(\gencliffmod,\phi_a)$.  We denote $\GSpZ_\Q$ by $\GSpQ$ and the space of Lagrangian subspaces of $(\gencliffmod_\R,\phi_a)$ by $\gspsymsp$. 

\begin{lemma}[{\cite[3.6]{MP16}}] \label{gsprep}  In the notation above: 
\begin{enumerate} 
\item We can choose $a\in \gencliff(\ZLattice)$ so that the embedding $\GSpinZ_\ZLattice\ra \GSpZ(\gencliffmod, \phi_a)$ induces an embedding of Shimura data $i^{ks} \colon (\GSpinQ_\QLattice,\symsp) \ra (\GSpQ, \gspsymsp)$. 
%\[\btk \rho\colon  \GSpinZ \ar[r] \ar[dr] &  \GSpZ(\gencliffmod,\phi_a) \ar[d]   \\ & \GL(\gencliffmod)  \etk\]
\item Let $\co$ be a compact open subset of the discriminant kernel, and let $\co'\subset \GSpQ(\mathbb{A}_f)$ be a compact open containing $\co$.  Then $i^{KS}$ induces a map of canonical models over $\Q$ 
\[ \KSQ \colon  \Shim{\GSpinQ_\QLattice}{\co}{ \symsp} \ra \Shim{\GSpQ}{\cosp}{\gspsymsp}\] 
\end{enumerate}
\end{lemma}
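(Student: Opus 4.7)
The strategy for part (1) is to construct the element $a\in\gencliff(\ZLattice)$ explicitly from a choice of base point in $\symsp$, and then verify the three requirements implicit in the statement: that $\phi_a$ is a nondegenerate alternating form, that $\GSpinZ$ preserves $\phi_a$ up to the spinor norm character (so the map $\GSpinZ\to\GL(\gencliffmod)$ factors through $\GSpZ$), and that every $h\in\symsp$ induces a Hodge structure on $\gencliffmod_\R$ polarized by $\phi_a$ (so the assignment of Hodge structures lands in $\gspsymsp$). The first two requirements are standard Clifford algebra manipulations; the third is the heart of the matter.

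To construct $a$, fix a base point $h_0\in\symsp$ and an orthogonal basis $e_1,e_2$ of the associated negative-definite plane $P_0\subset \QLattice_\R$, with $\ZQuadraticForm(e_i)<0$. Using the sign rule $\iota(v_1\cdots v_k)=(-1)^{\binom{k}{2}}v_1\cdots v_k$ for pairwise orthogonal vectors $v_i$, one chooses $a$ as a product of a suitable number of orthogonal basis vectors so that $\iota(a)=a$. Alternatingness of $\phi_a$ then reduces, via cyclicity of the trace together with $\Tr(\iota(z))=\Tr(z)$, to a parity condition on the number of factors; nondegeneracy follows from the semisimplicity of $\gencliff(\ZLattice)_\Q$; and $\GSpinZ$-equivariance up to the spinor norm is immediate from the relation $\iota(g)g=N(g)$ for $g$ in the spin group acting on $\gencliffmod$ by left multiplication.

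The crux is the polarization condition. At the base point $h_0$, the complex structure on $\gencliffmod_\R$ is left multiplication by a normalized version of $e_1 e_2\in\gencliff^+(\ZLattice)_\R$, since the relation $(e_1 e_2)^2 = -\ZQuadraticForm(e_1)\ZQuadraticForm(e_2)<0$ makes $e_1 e_2$ act as a square root of $-1$ after rescaling. Positivity of $\phi_a(x,h_0(i)x)$ is then an explicit computation in $\gencliff$ whose sign depends on the chosen $a$ and on the orientation of $P_0$; this pins down $a$ up to rescaling. To extend positivity from $h_0$ to arbitrary $h\in\symsp$, I would use that $\symsp$ is a single orbit of $\GSpinQ(\R)^+$ through the projection $\GSpinQ\to\SOQ$, and that the identity component preserves $\phi_a$ up to a positive scalar (since the spinor norm character is positive on the identity component), so positivity at $h_0$ propagates to every point. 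The main obstacle is exactly this step: choosing $a$ so that polarization holds uniformly on $\symsp$, not merely at the reference point.

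Part (2) is then formal. Given any morphism of Shimura data $(G_1,X_1)\to (G_2,X_2)$ and compact opens $\co_1\subset G_1(\A_f)$, $\co_2\subset G_2(\A_f)$ with $i^{ks}(\co_1)\subset \co_2$, the functoriality of canonical models produces a morphism of Shimura varieties $\Shim{G_1}{\co_1}{X_1}\to \Shim{G_2}{\co_2}{X_2}$ defined over the composite of the two reflex fields (see~\cite{Del79},~\cite{Mil05}). In our setting $E(\GSpinQ,\symsp)=\Q$, by the same splitting argument as for $\SOQ$ in Section~\ref{orthosv} applied to the central isogeny $\GSpinQ\to \SOQ$, and $E(\GSpQ,\gspsymsp)=\Q$ because $(\GSpQ,\gspsymsp)$ is a Siegel datum. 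Hence $\KSQ$ is defined over $\Q$ and is uniquely determined by the embedding of Shimura data in part (1).
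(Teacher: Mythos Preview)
The paper does not prove this lemma at all: it is stated with a bare citation to \cite[3.6]{MP16} and no argument. So there is no ``paper's own proof'' to compare against; your sketch is essentially the standard argument one finds in \cite{MP16} (and before that in \cite{Del72} and \cite{KS67}), and as a reconstruction of that argument it is broadly correct.

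One slip worth flagging: you follow the paper in asserting that $a$ should be a fixed point of $\iota$, i.e.\ $\iota(a)=a$. But the computation you yourself outline shows this gives a \emph{symmetric} form, not an alternating one: using $\Tr(\iota(z))=\Tr(z)$ and cyclicity, $\phi_a(y,x)=\Tr(\iota(a)\,\iota(x)y)$, so $\phi_a$ is alternating iff $\iota(a)=-a$. The standard choice $a=e_1e_2$ (product of an orthogonal basis of the negative-definite plane) satisfies $\iota(e_1e_2)=e_2e_1=-e_1e_2$, and this is what actually makes $\phi_a$ symplectic. The paper's ``fixed point of $\iota$'' is a typo you have inherited; your parity remark is correct, but the parity you committed to is the wrong one. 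Once this is corrected, the rest of your argument for part~(1)---nondegeneracy from semisimplicity of $\gencliff(\ZLattice)_\Q$, $\GSpinZ$-equivariance from $\iota(g)g=N(g)$, positivity at $h_0$ by explicit computation with $J=e_1e_2/\sqrt{|Q(e_1)Q(e_2)|}$, and propagation to all of $\symsp$ via transitivity of $\GSpinQ(\R)^+$---is the standard line and is fine.

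Part~(2) is indeed formal from functoriality of canonical models, exactly as you say; your reflex field computation for $\GSpinQ$ via the central isogeny to $\SOQ$ is correct.
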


%\noindent This embedding of Shimura data induces an embedding of Shimura varieties \cite[5.16]{Mil05}\colon 
%\[\KSC\colon  \Shim{\GSpinQ_\QLattice}{\co}{\symsp} (\C) \hookrightarrow \Shim{\GSpQ}{\cosp}{\gspsymsp}(\C)\]
%The representation $\rho$ defines a map of Shimura data $ (\GSpinQ , \symsp) \ra (\GSpQ, \gspsymsp)$ as follows.  By identifying $\symsp$ with the conjugacy class of maps $\SSS \ra \SOQ$, we have $h\in \symsp$ maps to $ h\mapsto  \rho\circ h$. The map of Shimura data induces a map on the complex points of the Shimura variety \margin{ref to Milne?} denote the map on the complex points \[ i^{KS}_\C\colon  \Shim{\mcG_\Q }{\co}{\symsp}(\C ) \ra \Shim{\GSp_\Q}{\co}{\gspsymsp} (\C).\] 

%\begin{prop}[{\cite[4.2.12]{Riz05}}] 
% The actions of $\Aut (\C/\Q)$ on $\Shim{\QLattice}{\co}{\symsp}_\C $ and $\Shim{\GSpQ }{\cosp}{\gspsymsp}_\Q$ are equivariant with respect to this map so that $i^{KS}_\C$ descends to a map of canonical models over $\Q$ 
%\[ \KSQ \colon  \Shim{\GSpinQ_\QLattice}{\co}{ \symsp} \ra \Shim{\GSpQ}{\cosp}{\gspsymsp}\] 
%\end{prop}

\noindent By explicit computation we have 
\begin{lemma}[{\cite[p.102]{Riz05}}] This map %\KSQ\colon  \Shim{\GSpin_\QLattice}{\co}{ \symsp} \ra \Shim{\GSpQ} {\co }{\gspsymsp}\]
descends to a map (which we also denote by $\KSQ$)%\margin{clean up; field of def of GSpin??}
\[\KSQ\colon  \Shim{\SOQ}{\co}{\symsp} \ra \Shim{\GSpQ}{\cosp}{\gspsymsp} \]
\end{lemma}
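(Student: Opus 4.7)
The plan is to exhibit the descent by verifying that the Kuga-Satake map $\KSQ$ is invariant under the natural quotient $\Shim{\GSpinQ_{\QLattice}}{\co}{\symsp} \to \Shim{\SOQ}{\co}{\symsp}$ induced by the central isogeny $\GSpinQ_{\QLattice} \to \SOQ$. First I would identify this kernel explicitly. From the definition of $\GSpinZ$ as the subgroup of $\gencliff^+(\ZLattice)^\times$ normalizing $\ZLattice$, the kernel of the conjugation action $g \mapsto (v \mapsto gvg^{-1})$ on $\QLattice$ is the central subgroup $\Gm \subset \GSpinQ_{\QLattice}$ consisting of invertible scalars in $\gencliff^+(\QLattice)$. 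Thus the descent reduces to showing that the map $\KSQ$ is constant along $\Gm$-orbits in the double-coset presentation of the Shimura variety.

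Next I would analyze the composition $\Gm \hookrightarrow \GSpinQ_{\QLattice} \xrightarrow{i^{ks}} \GSpQ$. Because $\GSpinQ_{\QLattice}$ acts on $\gencliffmod$ by left multiplication, the scalar $\lambda \in \Gm$ acts on $\gencliffmod$ as $\lambda \cdot \mathrm{Id}$. A direct computation with the pairing $\phi_a(x,y) = \Tr(\iota(x)ya)$ gives
\[
\phi_a(\lambda x, \lambda y) = \Tr(\iota(\lambda x) \lambda y a) = \lambda^2 \phi_a(x,y),
\]
so $\lambda$ lies in $\GSpQ$ with similitude multiplier $\lambda^2$, and in fact sits in the center of $\GSpQ$. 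Consequently, the image of $\Gm$ in $\GSpQ$ acts trivially on the symmetric domain $\gspsymsp$, since scalar multiplication fixes every Lagrangian subspace of $\gencliffmod_\R$.

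Assembling these ingredients at the level of Shimura varieties, the action of $\gamma \in \Gm(\Q)$ on the double coset $\GSpinQ_{\QLattice}(\Q)\backslash (\symsp \times \GSpinQ_{\QLattice}(\A_f)/\co)$ that corresponds to the $\SOQ$-quotient is translation on the adelic factor. After pushing forward to $\Shim{\GSpQ}{\cosp}{\gspsymsp}$, this translation by $\gamma \in \Gm(\Q) \subset \GSpQ(\Q)$ is absorbed into the $\GSpQ(\Q)$-quotient defining the target Shimura variety, and acts trivially on the $\gspsymsp$ component by the previous step. Hence $\KSQ$ is $\Gm(\Q)$-invariant and factors through $\Shim{\SOQ}{\co}{\symsp}$, giving the desired descended map.

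The main obstacle I foresee is the bookkeeping for the level structures: one needs $\cosp$ to be chosen large enough that the image of $\Gm(\A_f) \cdot \co$ under $i^{ks}$ is contained in $\cosp$, so that the induced map on quotients is well-defined and the $\Gm(\Q)$-invariance genuinely descends. This is the content of the ``explicit computation'' referenced from \cite[p.~102]{Riz05}, and amounts to a compatibility check between the discriminant-kernel hypothesis on $\co$ and the chosen compact open $\cosp$ in $\GSpQ(\A_f)$.
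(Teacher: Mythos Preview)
The paper does not supply its own argument here; it simply cites \cite[p.~102]{Riz05} with the phrase ``by explicit computation.'' Your strategy---identify the kernel of $\GSpinQ_\QLattice \to \SOQ$ as the central $\Gm$, observe that $i^{ks}$ carries it into the center of $\GSpQ$, and deduce that the map is constant on fibers---is exactly the right one and is what Rizov does.

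There is, however, a genuine gap in the execution. The fibers of $\Shim{\GSpinQ_\QLattice}{\co}{\symsp} \to \Shim{\SOQ}{\co}{\symsp}$ are not governed by $\Gm(\Q)$ alone. If $[h,g]$ and $[h',g']$ have the same image downstairs, one lifts the relevant $\gamma \in \SOQ(\Q)$ to $\tilde\gamma \in \GSpinQ_\QLattice(\Q)$ via Hilbert~90 and finds $g' = z\,\tilde\gamma\, g\, k$ with $k$ in the level and $z \in \Gm(\A_f)$: the lifting ambiguity lives in the \emph{adelic} center. Your last paragraph senses this and proposes demanding $i^{ks}(\Gm(\A_f)\cdot\co) \subset \cosp$, but that condition is impossible---$\Gm(\A_f)$ is noncompact while $\cosp$ is compact open. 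The correct resolution is that a central $z \in \Gm(\A_f)$ acts on $\Shim{\GSpQ}{\cosp}{\gspsymsp}$ through the quotient $\Gm(\A_f)\big/\bigl(\Gm(\Q)\cdot(\cosp\cap\Gm(\A_f))\bigr)$, and as soon as $\cosp$ contains the scalars $\widehat{\Z}^\times$ (as any lattice-stabilizer level does) this quotient is trivial because $\A_f^\times = \Q^\times \cdot \widehat{\Z}^\times$. Inserting this observation in place of your proposed level-structure condition completes the argument.
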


\begin{definition}
Over $\Shim{\GSpQ}{\cosp}{\gspsymsp}$, we have the universal abelian variety $(\RelativeAbelianVariety , \pol ,\lstr)$. We define the \emph{Kuga-Satake} abelian variety $\RelativeAbelianVariety ^{KS}_{\textup{Sh}_\co} \rightarrow \Shim{\SOQ}{\co}{\symsp}$ % (sometimes simply denoted $\RelativeAbelianVariety ^{KS}$
  to be the pullback of $\RelativeAbelianVariety $ under  $i^{KS}_\Q$.  
  \end{definition}
%\begin{enumerate}\item Let $\ZLattice = \EEightLattice^2 \oplus \HyperbolicPlaneLattice^2 \oplus \langle 1 \rangle^5$.  This defines a Kuga-Satake abelian variety over the Shimura variety $\Shim{\SOQ}{\co}{\symsp}$.  On $\C$ points, this constructs an isogeny power of the classical Kuga-Satake abelian variety.
\begin{definition} \label{uniformkugasatake}Recall the embedding $i_\de \colon  \ZLattice_\de \ra \ZLattice$ (\ref{inclusions}) and its induced map $i_d\colon \Shim{\SOQ_\de}{\co_\de}{\symsp_\de} \rightarrow \Shim{\SOQ}{\co}{\symsp}$ (\ref{orthoembedding}).   Let $s_\de\colon  \testsch \rightarrow \Shim{\SOQ_\de}{\co_\de}{\symsp_\de}$ be a map corresponding to the polarized K3 surface with level structure $(\ktsurf_{s_\de},\pol_{s_\de},\lstr_{s_\de})$ over $\testsch$.  We define the uniform Kuga-Satake abelian variety (suppressing notation for polarization and level structure) $\RelativeAbelianVariety ^{uKS}_{\testsch}$  associated to $s_\de$ to be the pullback of $\RelativeAbelianVariety _{\Sh{\cosp}}$  under $s_\de\circ i_\de \circ i^{KS}$: 

%We define the \emph{uniform Kuga-Satake} abelian variety $A^{uKS}_\de$ 
%$(A^{uKS}_\de , \pol_\de^{uKS} , \lstr_\de^{uKS} , \Endomor^{uKS}_\de )$ 
%to be the pullback under $i_\de$ of  $ \RelativeAbelianVariety ^{KS}$. %$(A^{KS}, \pol^{KS}, \lstr^{KS}, \Endomor^{KS} )$ along $i_\de $.

\[\btk \RelativeAbelianVariety _{T}^{uKS} \ar[r] \ar[d] & \UniversalRelativeAbelianVariety {\Sh{\co_\de}}^{uKS} \ar[d] \ar[r, ] & \UniversalRelativeAbelianVariety {\Sh{\co}}^{KS} \ar[d, "f"]\ar[r] & \UniversalRelativeAbelianVariety {\Sh{\cosp}} \ar[d]
\\ \testsch \ar[r, "s_\de"] &\Shim{\SOQ_\de}{\co_\de}{\symsp_\de} \ar[r, "i_\de" ]& \Shim{\SOQ}{\co}{\symsp} \ar[r, "i^{KS}"] & \Shim{\GSpQ}{\cosp}{\gspsymsp} \etk\]

%we denote the pullback of the tuple by  $(A^{uKS}_{s_\de} , \pol_{s_\de}^{uKS}, \lstr_{s_\de}^{uKS}, )$. 
\end{definition}
%
%\begin{remark} If we set $\ZLattice = \ZLattice_\de = \EEightLattice^{\oplus2} \oplus \HyperbolicPlaneLattice^2 \oplus \left< d\right>$, this family  \[ \btk (\RelativeAbelianVariety ^{KS}, \pol^{KS}, \lstr^{KS},\Endomor^{KS}) \ar[r] \ar[d]& (\RelativeAbelianVariety , \pol, \lstr, \Endomor) \ar[d] \\ \Shim{\SOQ_\de}{\co_\de}{\symsp_\de}  \ar[r] & \Shim{\GSpQ}{\co'}{\symsp'} \etk\] recovers the classical Kuga-Satake construction.
%\end{remark}

%\begin{prop}
%\end{prop}

\subsection{Cohomology}
The goal of this section is to establish Galois-equivariant $\ell$-adic realizations of the classical  cohomological relation $P^2(\ktsurf) \hookrightarrow \End(\UniversalRelativeAbelianVariety{\ktsurf}^{uKS})$.  We retain the notation of the previous section.

Let $f\colon  \UniversalRelativeAbelianVariety {\Sh{\co}}^{KS} \rightarrow \Shim{\SOQ}{\co}{\symsp}$ be the Kuga-Satake abelian variety constructed (in the third column of the diagram) above. The relative \'etale cohomology $R^1f_* (\Z_{\ell})$  defines a lisse $\Zl$-sheaf of rank $2^{\textup{rank}(\QLattice)} = 2^{25}$ on $\Shim{\SOQ}{\co}{\symsp}$. %For any point $s\colon \Spec(\NumberField) \rightarrow \Shim{\GSpin}{\co}{\symsp}$  we have the etale cohomology with $\wh\Z$ coefficients $H^1_{et}(A^{KS}_s, \wh\Z)\otimes h^1(A^{KS} _s)^*$.  \margin{fix}

%We now show how the primitive etale cohomology of a  K3  surface with coefficients in $\wh\Z$ is embedded in the endomorphism algebra of its Kuga-Satake abelian variety.

\begin{lemma}[{\cite[4.2]{MP15}}] Let $\Shim{\SOQ}{\co}{\symsp}$ and $\Shim{\SOQ_\de}{\co_\de}{\symsp_\de}$ be the Shimura varieties over $\Q$ constructed in the previous section. For all $\ell$, we have lisse $\Zl$-sheaves $\locsys{\ZLattice_{\Zl}}$ and $\locsys{\ZLattice_{\de,{\Zl}}}$  on $\Shim{\SOQ}{\co}{\symsp}$ and $\Shim{\SOQ_\de}{\co_\de}{\symsp_\de}$ respectively.  
\end{lemma}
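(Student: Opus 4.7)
The plan is to invoke the standard construction of $\ell$-adic lisse sheaves on a Shimura variety attached to an integral representation of the underlying reductive group; the two sheaves are produced by the same procedure, so I describe it once for $\locsys{\ZLattice_{\Zl}}$ and merely note the analogous application for $\locsys{\ZLattice_{\de,\Zl}}$.

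First, since $\co$ is contained in the discriminant kernel, it lies inside $\SOZ(\widehat{\Z}) = \prod_\ell \SOZ(\Zl)$, so its $\ell$-component $\co_\ell$ preserves $\ZLattice_{\Zl}$ via the tautological representation of $\SOZ$. For each $n \ge 1$, set $\co(n) \coloneq \ker\bigl(\co \to \Aut(\ZLattice/\ell^n\ZLattice)\bigr)$; this is a normal open subgroup of $\co$ of finite index, and is automatically torsion-free as a subgroup of the torsion-free $\co$. By the general theory of Shimura varieties (cf.\ \cite{Del79}, \cite{Mil05}, \cite{Moo98}), the induced map $\Shim{\SOQ}{\co(n)}{\symsp} \to \Shim{\SOQ}{\co}{\symsp}$ is a finite \'etale Galois cover with Galois group $\co/\co(n)$.

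Second, descending the constant sheaf with value $\ZLattice/\ell^n\ZLattice$ on $\Shim{\SOQ}{\co(n)}{\symsp}$ along the $\co/\co(n)$-action yields a locally constant sheaf $\mathcal{F}_n$ of $\Z/\ell^n\Z$-modules of rank $\textup{rk}(\ZLattice)$ on $\Shim{\SOQ}{\co}{\symsp}$. The natural reduction maps $\ZLattice/\ell^{n+1}\ZLattice \twoheadrightarrow \ZLattice/\ell^n\ZLattice$ are $\co$-equivariant, so they descend to surjective transition maps $\mathcal{F}_{n+1} \twoheadrightarrow \mathcal{F}_n$. By definition of a lisse $\Zl$-sheaf as such a compatible inverse system, the family $\{\mathcal{F}_n\}_n$ defines $\locsys{\ZLattice_{\Zl}}$. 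The identical construction with $\mathcal{SO}(\ZLattice_\de)$, $\ZLattice_{\de,\Zl}$, and $\co_\de$ in place of $\SOZ$, $\ZLattice_{\Zl}$, and $\co$ produces $\locsys{\ZLattice_{\de,\Zl}}$ on $\Shim{\SOQ_\de}{\co_\de}{\symsp_\de}$.

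The only nontrivial input is the existence and \'etaleness of the Galois tower of finer-level Shimura varieties; this is standard once torsion-freeness (which we arranged at the outset) is in place, and in our precise setting is recorded in \cite[4.2]{MP15} as cited.
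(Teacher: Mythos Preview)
Your argument is correct: this is precisely the standard ``automorphic $\ell$-adic sheaf'' construction, and it is essentially what underlies the reference \cite[4.2]{MP15}. The paper itself does not supply a proof of this lemma; it simply records the statement with the citation, so there is nothing further to compare.
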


%\begin{proof}This construction is classical and is summarized in \cite[3.3]{MP14a}.  The construction loc. cit. is for $\Z_{(p)}$-representations, however all the steps go through for the $\Z$-representations $\ZLattice$ and $\ZLattice_\de$.
%\end{proof} \margin{except the step where you}

\begin{definition} \label{ktmoddettriv}
We define $\ktmoddedettriv{\de}{}$ to be the degree two \'etale cover of $\ktmodde{\de}$ given by adding a trivialization $\det(L_\de)\otimes \Z_2 \simeq P^2_{\et}(\ktsp,{\Z}_2)$ to the data of $\ktmodde{\de}$.  We use the same notation for the degree two \'etale cover for all the other K3 moduli functors.  
\end{definition} 
\begin{remark} The cover $\ktmoddedettriv{\de}{} \to \ktmodde{\de}$ has a (non-canonical) section, and we may lift $\basesch$ points of the latter to $\basesch$ points of the former.  
\end{remark}

\begin{lemma}[{\cite[5.6.1]{MP15}}]\label{p2isototautologicalsheaf} % \margin{fix}
 Let   $j_{\de,\co_\de, \Q} \colon \ktmoddecorel{\de}{\co_\de}{\Q} \ra \Shim{\SOQ_\de}{\co_\de}{\symsp_\de}$ be the Torelli map (see \ref{torelli}). For all $\ell$, we have an isomorphism of lisse $\Zl$-sheaves \[j_{\de, \co_\de,\Q}^*(\locsys{\ZLattice_{\de,\Zl}} ) \simeq  P^2_{\et}(\uktsurf,\Zl(1))\]
\end{lemma}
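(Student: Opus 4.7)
The plan is to reduce this to unwinding the definitions of the Torelli map and of the tautological sheaf, since the Torelli map is constructed precisely so that the pullback of the standard representation of $\SOQ_\de$ realizes primitive second cohomology.

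First I would recall how $\locsys{\ZLattice_{\de,\Zl}}$ is built: the Shimura variety $\Shim{\SOQ_\de}{\co_\de}{\symsp_\de}$ carries a tautological $\co_\de$-torsor (coming from its presentation as $\SOQ_\de(\Q)\backslash (\symsp_\de \times \SOQ_\de(\A_f)/\co_\de)$), and $\locsys{\ZLattice_{\de,\Zl}}$ is the $\Zl$-local system obtained by pushing this torsor along the natural map $\co_\de \to \mathcal{SO}(\ZLattice_\de)(\Zl)$ acting on $\ZLattice_{\de,\Zl}$. Analytically, over $\C$, this is the variation of $\Z$-Hodge structure on $\ZLattice_\de$ whose Hodge filtration at a point $h\in \symsp_\de$ is the one described in \S\ref{orthosv}.

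Next I would recall the construction of the Torelli map, following \cite{Riz10} and \cite{MP15}: a point of $\ktmoddecorel{\de}{\co_\de}{\Q}$ is a triple $(\ktsurf,\pol,\lstr)$, where $\lstr$ is (up to $\co_\de$) an isometry $\kthreelat\otimes \wh\Z \simeq R^2\pi_*\wh\Z(1)$ sending $v_\de$ to the chern class of $\pol$. Restricting $\lstr$ to $v_\de^\perp = \ZLattice_\de$ gives an isometry $\ZLattice_\de\otimes \wh\Z \simeq P^2_\et(\ktsurf,\wh\Z(1))$ well defined up to the discriminant kernel, hence up to $\co_\de$. The Torelli map sends $(\ktsurf,\pol,\lstr)$ to the Shimura datum class whose $\co_\de$-torsor data is exactly the class of this $\lstr$ and whose Hodge structure is that of $P^2(\ktsurf,\Z(1))$. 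Thus by construction the pullback of the tautological $\co_\de$-torsor on the Shimura variety is the torsor of $\co_\de$-level structures on $P^2_\et(\uktsurf,\wh\Z(1))$, which means $j_{\de,\co_\de,\Q}^*(\locsys{\ZLattice_{\de,\Zl}}) \simeq P^2_\et(\uktsurf,\Zl(1))$ as lisse $\Zl$-sheaves.

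The one subtle point, which is the main obstacle, is that the comparison must be checked at the $\ell$-adic level (and not merely for the Betti realization over $\C$). One way to proceed is to first establish the identification over $\C$ using the explicit description of $\symsp_\de$ as the space of Hodge structures on $\ZLattice_\de$ together with the Artin comparison isomorphism $P^2(\uktsurf^{\textup{an}},\Z(1))\otimes \Zl \simeq P^2_\et(\uktsurf,\Zl(1))$, and then descend to $\Q$ by Galois-equivariance: both sheaves are lisse $\Zl$-sheaves on a smooth quasi-projective $\Q$-scheme whose $\C$-analytifications are canonically identified, so the isomorphism descends. At the level of precise bookkeeping this is exactly what is carried out in \cite[5.6.1]{MP15}, which I would cite for the final verification.
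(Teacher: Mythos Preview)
The paper does not supply its own proof of this lemma: it is stated as a quotation from \cite[5.6.1]{MP15}, with no accompanying argument. Your proposal is therefore not so much a different approach as an expansion of what the paper leaves implicit; the sketch you give---that the Torelli map is built precisely so that the level-structure torsor matches the tautological $\co_\de$-torsor on the Shimura side, whence the pullback identification is essentially definitional once one compares Betti and \'etale realizations---is correct in outline and ends at the same citation the paper relies on.
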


\begin{lemma} \label{kugasatakelattice}
If $\ZLattice$ is unimodular, then $\locsys{\ZLattice_{\Zl}}$ is a sub-$\Zl$-sheaf of $\End_{C(\ZLattice)}(R^1f_* (\Z_{\ell}))$.
\end{lemma}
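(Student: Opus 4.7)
The strategy is to verify the inclusion at the level of $\GSpinZ_\ZLattice$-representations and then pass to associated lisse $\Zl$-sheaves on $\Shim{\SOQ}{\co}{\symsp}$. Since the Kuga-Satake family $f\colon \UniversalRelativeAbelianVariety{\Sh{\co}}^{KS} \to \Shim{\SOQ}{\co}{\symsp}$ is obtained by pulling back the universal abelian variety along $\KSQ$, and since the Siegel embedding $i^{ks}$ of Lemma \ref{gsprep} is built from the representation of $\GSpinZ_\ZLattice$ on $\gencliffmod = \gencliff(\ZLattice)$ by left multiplication, we may identify $R^1 f_*(\Z_\ell)$ with the lisse sheaf attached to $\gencliffmod_\Zl$. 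Right multiplication by $\gencliff(\ZLattice_\Zl)$ on itself commutes with left multiplication by $\GSpinZ_\ZLattice$, so it equips $R^1 f_*\Z_\ell$ with a canonical right $\gencliff(\ZLattice_\Zl)$-module structure, giving meaning to $\End_{\gencliff(\ZLattice)}(R^1 f_*\Z_\ell)$.

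Next, since $\gencliffmod_\Zl = \gencliff(\ZLattice_\Zl)$ is free of rank one as a right $\gencliff(\ZLattice_\Zl)$-module, the standard identification $r \mapsto (x \mapsto rx)$ gives a $\GSpinZ_\ZLattice$-equivariant isomorphism
\[\gencliff(\ZLattice_\Zl) \xrightarrow{\sim} \End_{\gencliff(\ZLattice_\Zl)}(\gencliffmod_\Zl),\]
where $\GSpinZ_\ZLattice$ acts on the source by conjugation. The canonical inclusion $\ZLattice_\Zl \hookrightarrow \gencliff(\ZLattice_\Zl)$ is a saturated $\Zl$-summand (the degree-one piece of the natural filtration), is stable under the conjugation action, and the induced action of $\GSpinZ_\ZLattice$ on $\ZLattice_\Zl$ factors through the standard representation of $\SOZ$ via the spin covering $\GSpinZ_\ZLattice \twoheadrightarrow \SOZ$. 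Passing to associated lisse sheaves on $\Shim{\SOQ}{\co}{\symsp}$ (on which these conjugation actions descend, since the center $\Gm \subset \GSpinZ_\ZLattice$ acts trivially by conjugation) yields the desired embedding $\locsys{\ZLattice_\Zl} \hookrightarrow \End_{\gencliff(\ZLattice)}(R^1 f_*\Z_\ell)$.

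The main obstacle is ensuring the above identifications are valid integrally at $\ell = 2$, where Clifford algebras and spin groups of arbitrary $\Z_2$-lattices can be ill-behaved. The hypothesis that $\ZLattice$ is unimodular is used precisely here: it guarantees that the bilinear form on $\ZLattice_{\Z_2}$ remains non-degenerate, so that $\gencliff(\ZLattice_{\Z_2})$ is Azumaya, the group scheme $\GSpinZ_\ZLattice$ is smooth over $\Z$, and the free-module identification of the endomorphism ring persists integrally. With these points in hand, the argument runs uniformly at every prime $\ell$.
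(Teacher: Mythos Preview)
Your representation-theoretic picture is correct and is a reasonable alternative to what the paper does: the paper does not argue directly but cites \cite[3.3--3.12]{MP16}, where $\locsys{\ZLattice_{\Ql}}$ is cut out of a tensor construction on $R^1f_*\Ql$ by an explicit projector $\pi\colon \End(\gencliff(\ZLattice))_\Q \to \ZLattice_\Q$ (this is \cite[Lemma~1.4]{MP16}, invoked at 3.12). That projector involves dividing by $\disc(\ZLattice)$, and the paper's sole point is that when $\ZLattice$ is unimodular the projector is already integral, so the construction goes through over $\Zl$ for every $\ell$. Your route via $\ZLattice_\Zl \subset \gencliff(\ZLattice_\Zl)=\End_{\gencliff(\ZLattice_\Zl)}(\gencliffmod_\Zl)$ bypasses that projector altogether.

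The problem is your explanation of where unimodularity enters. None of the identifications you write down --- the degree-one inclusion $\ZLattice_\Zl\hookrightarrow \gencliff(\ZLattice_\Zl)$, the isomorphism $R\simeq \End_R(R_R)$, the $\GSpinZ_\ZLattice$-stability of $\ZLattice_\Zl$ under conjugation --- use non-degeneracy, Azumaya-ness, or smoothness of $\GSpinZ_\ZLattice$; they hold for an arbitrary $\Zl$-lattice at every prime $\ell$, including $\ell=2$. So your argument, as written, does not actually use the hypothesis, and your final paragraph locates it in the wrong place. Either you have proved something strictly stronger than the lemma (which you should then say), or --- more likely --- the real work you are skipping is the descent of the inclusion of representations to an inclusion of lisse $\Zl$-sheaves on the \emph{canonical model over $\Q$}, not just over $\C$. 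That descent is precisely the content of \cite[3.3--3.12]{MP16}, and it is in that descent argument (via the projector) that unimodularity is genuinely used. You should either engage with that step or acknowledge that you are quoting it, and drop the claim about $\ell=2$.
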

\begin{proof} The statement needed here is found in \cite[4.2]{MP15}, which refers to \cite[3.3-3.12]{MP16} for a more detailed proof.  The proof in loc. cit. outlines the construction for $\locsys{\ZLattice_{\Ql}} $ associated to $\ZLattice_{\Ql}$. However all but one step go through for a $\Z$-lattice $\ZLattice$. The only place where $\Q$-coefficients are needed is the use of Lemma 1.4 in loc. cit. 3.12.  This lemma uses division by $\disc(\ZLattice)$  to construct a projector $\mathbf{\pi} \colon  \End(C(\ZLattice))_\Q \rightarrow \ZLattice_\Q$, however under the additional assumption that $\ZLattice$ is unimodular, it holds without tensoring with $\Q$.
\end{proof}

\begin{lemma}[{\cite[4.9]{MP15}}, {\cite[7.15]{MP16}}] \label{projectionlemma} Recall the embedding $i_\de \colon  \ZLattice_\de \rightarrow \ZLattice$, and the induced map of Shimura varieties $i_\de\colon \Shim{\SOQ_\de}{\co_\de}{\symsp_\de}\rightarrow \Shim{\SOQ}{\co}{\symsp}$.   

\begin{enumerate}%\label{sublocalsystem}
\item We have a metric embedding  $\locsys{\ZLattice_\de}\ra i_\de^* (\locsys{\ZLattice})$.
\item The  sublocal system $(\locsys{\ZLattice_\de})^\perp \subset  i_\de^* (\locsys{\ZLattice})$  is trivial.
%\item The sheaf $\locsys{\ZLattice}$ on $\Shim{\SOQ}{\co}{\symsp}_\C$ descends to one on $\Shim{\SOQ}{\co}{\symsp}$.  
\end{enumerate}
\end{lemma}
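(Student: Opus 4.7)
The plan is to reduce both assertions to the $\SOQ_\de$-equivariant orthogonal decomposition of the standard representation $\ZLattice_\Ql = \ZLattice_{\de,\Ql} \oplus \Lattice_{\de,\Ql}$, using the fact that the local systems $\locsys{?}$ are the tautological $\Zl$-sheaves on orthogonal Shimura varieties attached to standard representations of the orthogonal group.

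I would begin by appealing to the functoriality of the tautological sheaf construction (as set up in the Madapusi-Pera references already cited in the paper): for an embedding of Shimura data arising from an inclusion of orthogonal groups, the pullback of the tautological $\Zl$-sheaf attached to the ambient standard representation coincides with the tautological sheaf attached to its restriction to the subgroup. Applied to $i_\de\colon (\SOQ_\de, \symsp_\de) \hookrightarrow (\SOQ, \symsp)$, this identifies $i_\de^*(\locsys{\ZLattice_\Zl})$ with the $\Zl$-sheaf on $\Shim{\SOQ_\de}{\co_\de}{\symsp_\de}$ associated to the restriction of the standard $\SOQ$-representation on $\ZLattice_\Zl$ to $\SOQ_\de$ along $i_\de$.

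Next I would exploit the defining property of $\SOZ_\de$: since it fixes $\Lattice_\de = \ZLattice_\de^\perp$ pointwise by construction, it preserves the orthogonal decomposition $\ZLattice_\Q = \ZLattice_{\de,\Q} \oplus \Lattice_{\de,\Q}$, acting on $\ZLattice_{\de,\Q}$ through the standard representation of $\mathcal{SO}(\ZLattice_\de)_\Q$ via the isomorphism $r_\de$, and acting trivially on $\Lattice_{\de,\Q}$. Passing back to tautological sheaves, the sub-representation $\ZLattice_{\de,\Zl}$ yields the $\SOQ_\de$-equivariant metric embedding of $\Zl$-local systems $\locsys{\ZLattice_{\de,\Zl}} \hookrightarrow i_\de^*(\locsys{\ZLattice_\Zl})$ claimed in (1), while the orthogonal complement is the local system associated to the trivial $\SOQ_\de$-representation on the saturation of $\Lattice_{\de,\Zl}$, hence trivial, proving (2).

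The main obstacle is ensuring that the tautological sheaf construction splits compatibly with the orthogonal decomposition at the level of $\Zl$-coefficients rather than merely $\Ql$-coefficients. Over $\Ql$ this is automatic, and for primes $\ell \nmid 2\de$ one has the integral decomposition $\ZLattice_{\de,\Zl} \oplus \Lattice_{\de,\Zl} \xrightarrow{\sim} \ZLattice_\Zl$ on the nose; for $\ell \mid 2\de$ the two sides differ by a finite sub-quotient, but since $\SOQ_\de$ still acts trivially on $\Lattice_{\de,\Zl}$ and on its saturation inside $\ZLattice_\Zl$, the orthogonal complement local system is trivial in either case, which is the substantive content of (2).
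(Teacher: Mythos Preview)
The paper does not supply its own proof of this lemma; it simply records the statement with citations to \cite[4.9]{MP15} and \cite[7.15]{MP16}. Your sketch is essentially the argument found in those references: the tautological $\Zl$-sheaves are attached functorially to the standard representations of the relevant (G)Spin and orthogonal groups, and pullback along $i_\de$ reduces both claims to the $\SOQ_\de$-equivariant decomposition $\ZLattice_\Q = \ZLattice_{\de,\Q} \oplus \Lattice_{\de,\Q}$ together with the defining fact that $\SOZ_\de$ fixes $\Lattice_\de$ pointwise. One refinement worth flagging, which you partially acknowledge in your last paragraph: in the cited construction the $\Zl$-sheaves are not produced abstractly from the representation but are realized concretely inside $\End_{C(\ZLattice)}(R^1f_*\Zl)$ via the Kuga-Satake family, so the integral compatibility of the embedding and of the orthogonal complement is something checked in \cite{MP15, MP16} rather than a purely formal consequence of the representation-theoretic splitting.
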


%\begin{proof} %We define  $\underline{\ZLattice}$ to be the image of $\pi\colon  \End_\Z(\End_\Z(R^1f_*(\UniversalRelativeAbelianVariety {\Sh{\co}}^{KS}))) $ where 
%%\[ \pi(\phi ) = \sum [\phi , e_i] \cdot Ae_i\] 
%%here $[ \phi_1,\phi_2 ] = \frac{1}{2^n}\Tr(\phi_1\circ \phi_2)$ and $A$ is the inverse matrix $\langle e_i,e_j\rangle_Q$.  Note that $\langle e_i,e_j \rangle_Q$ is invertible over $\Z$ since the quadratic form $Q$ is unimodular.  %
%% For details see \cite[1.4, 3.11, 3.12]{MP14a}. 
%For (1), see \cite[4.9]{MP14b}.  For (2), 
%\end{proof}

%\begin{lemma}\label{sublocalsystem} Let $\ZLattice = \EEightLattice^2\oplus \HyperbolicPlaneLattice^2\oplus \langle 1\rangle^5$.
%Pullback under the uniform Kuga-Satake morphism defines a local system $( i_\de\circ i^{KS}) ^*\underline\ZLattice $ on $\Shim{\SOQ_\de}{\co_\de}{\symsp_\de}$.  The local system $\underline{\ZLattice_\de}$ is a sublocal system.
%\end{lemma}
%
%\begin{proof} \cite[4.9]{MP14b}
%\end{proof}

%\begin{lemma}\label{4.13} The pullback of $\underline{\ZLattice}$ under the inclusion $i_d\colon  \Shim{\mcG_\de}{\co}{ \symsp_\de} \hookrightarrow \Shim{\GSp}{\co}{\symsp}$ has a Galois stable sub-sheaf $\underline{\ZLattice_\de} \hookrightarrow  i_d^* \underline{\ZLattice}$.  
%\end{lemma}
%\begin{proof} This is essentially the statement of \cite[3.9]{MP14a}, taking $\mathscr{S} $ to be $\Q$ and $T$ to be $\Shim{G_\de}{\co}{\symsp_\de}$.
%\end{proof}

\begin{lemma}[\cite{MP15}, 5.6.(3)] \label{4.14} Let $s\colon \NumberField  \rightarrow \Shim{\SOQ_\de}{\co_\de}{\symsp_\de}$ be a $\NumberField$-point and let $(\ktsurf_{s}, \pol_{s}, \lstr_{s})$ be the corresponding tuple of  K3  surface, polarization, and level structure.  We have a Galois equivariant isomorphism $(\locsys{\ZLattice_\de} )_s \simeq P^2_{\et}(\ktsurf, \Zl(1))$.
\end{lemma}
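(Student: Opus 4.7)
The plan is to obtain the stated isomorphism by specializing Lemma \ref{p2isototautologicalsheaf} at the point $s$. First, since the Torelli map $j_{\de,\co_\de,\Q}$ is an open immersion (Proposition \ref{torelli}) and $s$ is equipped with the polarized K3 datum $(\ktsurf_{s},\pol_{s},\lstr_{s})$, the moduli interpretation of $\ktmoddecorel{\de}{\co_\de}{\Q}$ produces a unique $\NumberField$-point $t\colon \Spec(\NumberField)\to \ktmoddecorel{\de}{\co_\de}{\Q}$ classifying this tuple, and the universal property forces the factorization $s = j_{\de,\co_\de,\Q}\circ t$. By construction, $\ktsurf_s$ is the pullback along $t$ of the universal K3 family $\uktsurf \to \ktmoddecorel{\de}{\co_\de}{\Q}$.

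Next, I pull back the isomorphism of lisse $\Zl$-sheaves
\[
  j_{\de,\co_\de,\Q}^{*}(\locsys{\ZLattice_{\de,\Zl}}) \;\simeq\; P^2_\et(\uktsurf,\Zl(1))
\]
of Lemma \ref{p2isototautologicalsheaf} along $t$. By functoriality of pullback, the left-hand side becomes $s^{*}(\locsys{\ZLattice_{\de,\Zl}}) = (\locsys{\ZLattice_\de})_s$. For the right-hand side, since the universal family $\uktsurf\to \ktmoddecorel{\de}{\co_\de}{\Q}$ is proper and smooth, the proper–smooth base change theorem for $\ell$-adic cohomology yields $t^{*}P^2_\et(\uktsurf,\Zl(1)) \simeq P^2_\et(\ktsurf_s,\Zl(1))$ as $\Zl$-sheaves on $\Spec(\NumberField)$, i.e.\ as $\AbsoluteGaloisGroup{\NumberField}$-modules.

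Galois equivariance is then automatic. The isomorphism in Lemma \ref{p2isototautologicalsheaf} is defined over $\Q$ on the $\NumberField$-scheme $\ktmoddecorel{\de}{\co_\de}{\Q}$, and both $s$ and $t$ are $\NumberField$-morphisms; the resulting identification of stalks therefore respects the natural $\AbsoluteGaloisGroup{\NumberField}$-actions carried by each side from its $\NumberField$-structure, and no further construction is needed to promote the isomorphism to one of Galois representations.

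I do not anticipate a substantive obstacle: the lemma is essentially a formal specialization of an already-established sheaf-theoretic identity. If anything requires care, it is only the bookkeeping of conventions — the Tate twist by $\Zl(1)$ and the passage to the primitive part $P^2_\et(-,\Zl(1))\subset H^2_\et(-,\Zl(1))$ — but these conventions are fixed consistently in Section \ref{k3moduli} and in the construction of $\locsys{\ZLattice_{\de,\Zl}}$, so matching them on the nose presents no difficulty.
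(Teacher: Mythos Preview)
Your argument is correct. The paper does not supply its own proof of this lemma; it is simply recorded as a citation to \cite[5.6.(3)]{MP15}. Your derivation, namely specializing the sheaf-theoretic isomorphism of Lemma~\ref{p2isototautologicalsheaf} at the $\NumberField$-point $t$ lifting $s$ through the open immersion of Proposition~\ref{torelli} and invoking proper--smooth base change, is exactly the natural way to extract the pointwise statement from the family version, and it is how the two parts of \cite[5.6]{MP15} are related in the source. No gap.
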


%We note that the statement from the reference is for the local system $\locsys{\ZLattice_\de}$ on $\ktmoddedettrivQ{\de}{\co_\de}$.  However, the map $\ktmoddedettriv{\de}{} \to \ktmodde{\de}{}$ is \'etale and the map $s$ can be lifted. 

\noindent Combining the above lemmas, we have: 
\begin{prop}\label{latticeembedding}  Let $s\colon \NumberField \ra \Shim{\SOQ_\de}{\co_\de}{\symsp_\de}  $ be a $\NumberField$-point. Let $(\ktsurf_{s}, \pol_{s}, \lstr_{s})$ be the corresponding tuple of  K3  surface, polarization, and level structure.  Then $P^2_{\et}(\ktsurf,\widehat{\mathbb{Z}}(1)) \simeq s^*(\locsys{\ZLattice_\de})$. 
The $\widehat{\mathbb{Z}}$-lattice $P^2_{\et}(\ktsurf, \widehat{\mathbb{Z}}(1))$ is  isometrically and $\AbsoluteGaloisGroup{\NumberField}$-equivariantly  embedded in $i_\de^*(\locsys{\ZLattice})_s$.  Its orthogonal complement $P^2_{\et}(\ktsurf, \widehat{\mathbb{Z}}(1))^\perp \subset (\locsys{\ZLattice})_s$ is trivial as a $\AbsoluteGaloisGroup{\NumberField}$ module.
\end{prop}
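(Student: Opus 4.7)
The plan is to assemble the proposition as a direct consequence of Lemmas \ref{p2isototautologicalsheaf}, \ref{projectionlemma}, and \ref{4.14} by pulling back the generic metric embedding along the point $s$ and then taking the product over all primes $\ell$.

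First I would work at a fixed prime $\ell$. By \ref{4.14} (or equivalently by pulling back the identification $j_{\de,\co_\de,\Q}^{*}(\locsys{\ZLattice_{\de,\Zl}}) \simeq P^2_{\et}(\uktsurf,\Zl(1))$ of \ref{p2isototautologicalsheaf} along the lift of $s$ through the open immersion $j_{\de,\co_\de,\Q}$ of \ref{torelli}) I obtain a $\AbsoluteGaloisGroup{\NumberField}$-equivariant isometry
\[
s^{*}\bigl(\locsys{\ZLattice_{\de,\Zl}}\bigr) \;\simeq\; P^2_{\et}(\ktsurf_s,\Zl(1)).
\]
Next I would invoke \ref{projectionlemma}: part (1) gives a metric embedding of lisse $\Zl$-sheaves $\locsys{\ZLattice_{\de,\Zl}} \hookrightarrow i_\de^{*}(\locsys{\ZLattice_{\Zl}})$, and part (2) says its orthogonal complement inside $i_\de^{*}(\locsys{\ZLattice_{\Zl}})$ is a trivial local system. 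Pulling this embedding back along $s$, and using that the pullback of a trivial local system is trivial, gives an isometric $\AbsoluteGaloisGroup{\NumberField}$-equivariant embedding
\[
s^{*}\bigl(\locsys{\ZLattice_{\de,\Zl}}\bigr) \;\hookrightarrow\; (i_\de^{*}\locsys{\ZLattice_{\Zl}})_s
\]
whose orthogonal complement is a trivial $\AbsoluteGaloisGroup{\NumberField}$-module. Combining with the identification above yields the proposition at each prime $\ell$.

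Finally, to upgrade to $\widehat{\Z}$-coefficients I would simply take the product over all $\ell$: the lisse $\widehat{\Z}$-sheaves $\locsys{\ZLattice_{\de}}$ and $\locsys{\ZLattice}$ decompose as products of their $\ell$-adic components, as do the associated pairings and Galois actions, and the $\widehat{\Z}$-chern map realizes $P^2_{\et}(\ktsurf,\widehat{\Z}(1))$ as $\prod_\ell P^2_{\et}(\ktsurf,\Zl(1))$. Since the embedding and the triviality of the complement hold at every $\ell$, they assemble to the required statement over $\widehat{\Z}$.

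I do not anticipate a real obstacle: the only points that require a little care are (i) checking that the Torelli open immersion lets one replace ``$K$-point of the Shimura variety'' by ``$K$-point lying in the image of the moduli'' so that \ref{p2isototautologicalsheaf}/\ref{4.14} applies (this is built into the hypothesis that $s$ corresponds to a K3 triple $(\ktsurf_s,\pol_s,\lstr_s)$), and (ii) the compatibility of the pairings on $\locsys{\ZLattice_{\de}}$ with the intersection pairing on $P^2_{\et}(\ktsurf,\widehat{\Z}(1))$, which is exactly the content of the identifications in \ref{p2isototautologicalsheaf} and \ref{4.14}. Everything else is formal functoriality of pullback.
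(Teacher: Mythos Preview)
Your proposal is correct and matches the paper's approach: the paper simply prefaces the proposition with ``Combining the above lemmas, we have,'' indicating it is exactly the assembly of \ref{p2isototautologicalsheaf}/\ref{4.14} and \ref{projectionlemma} that you spell out. Your added remarks about taking the product over $\ell$ and the r\^ole of the Torelli immersion are reasonable clarifications of points the paper leaves implicit.
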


%\begin{proof} This follows from lemmas \ref{p2isototautologicalsheaf} and  \ref{projectionlemma}. show that the lattice $P^2(\ktsurf_s, \Zl)$ embeds metrically and Galois equivariantly into the lattice $\underline{\ZLattice}_{i^{KS}\circ i_\de (s)}$.  
%\end{proof}

\section{Proof of theorem}
\setcounter{subsection}{1}
\setcounter{subsubsection}{0}

In this section we prove the main theorem: 

\begin{theorem}\label{mainthmend}
Let $\NumberField$ be a fixed number field and  $\Places$ a fixed set of places of $\NumberField$.  The set of isomorphism classes of  K3  surfaces defined over $\NumberField$ with good reduction away from $\Places $ is finite.  We denote this set by $\ShafKthree{\NumberField}{\Places}$.
\end{theorem}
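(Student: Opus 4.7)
The strategy is to combine the uniform Kuga--Satake construction of Section \ref{ks} with Faltings' theorem (\ref{faltings}). The uniform construction places the Kuga--Satake abelian variety of every $X \in \ShafKthree{\NumberField}{\Places}$ into a single moduli space of polarized abelian varieties of fixed dimension and polarization degree, so Faltings applies uniformly in the degree $\de$; the remaining task is then to show that the resulting assignment $X \mapsto A^{uKS}_X$ has finite fibers.

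\textbf{Kuga--Satake and Faltings.} Every K3 surface $X/\NumberField$ admits a primitive polarization $\pol_X$ of some degree $2 \de_X$, possibly after a bounded extension of $\NumberField$. Applying Definition \ref{uniformkugasatake} to $(X, \pol_X)$ yields an abelian variety $A^{uKS}_X/\NumberField$ of dimension $2^{\mathrm{rank}(\ZLattice) - 1}$ together with a polarization $\Phi$ induced from the globally fixed Clifford pairing $\phi_a$ of Lemma \ref{gsprep}; both the dimension and the polarization degree are independent of $\de_X$. By the extension property of the integral canonical model $\intmodel{\co}{\QLattice}/\Z_{(p)}$ for $p>2$ (Theorem \ref{integralmodel}), $A^{uKS}_X$ has good reduction away from $\Places' \coloneq \Places \cup \{v : v \mid 2\}$. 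Faltings' theorem then produces only finitely many $\NumberField$-isomorphism classes of polarized abelian varieties $(A^{uKS}_X, \Phi)$.

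\textbf{Fiber finiteness.} For each fixed $(A, \Phi)$ in the image, I would bound the set of $X$ with $A^{uKS}_X \cong A$. Faltings' isogeny theorem identifies $\End(A) \otimes \widehat{\Z}$ with $\End_{G_\NumberField}(H^1(A,\widehat{\Z}))$, and the Clifford module structure pins down the $G_\NumberField$-equivariant rank-$25$ sublattice $(\locsys{\ZLattice})_s \subset \End(H^1(A,\widehat{\Z}))$. By Proposition \ref{latticeembedding}, each K3 $X$ with $A^{uKS}_X \cong A$ contributes a saturated $G_\NumberField$-equivariant embedding $P^2_{\et}(X,\widehat{\Z}(1)) \hookrightarrow (\locsys{\ZLattice})_s$ whose rank-$4$ orthogonal complement is a trivial Galois module, forcing this complement to lie inside the finitely generated $\Z$-lattice $((\locsys{\ZLattice})_s)^{G_\NumberField}$. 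Passing to Hodge structures through a complex embedding of $\NumberField$ and using Artin comparison, the non-$(0,0)$ piece of $(\locsys{\ZLattice})_s \otimes \Q$ coincides with the transcendental Hodge structure $T(X)\otimes\Q$, hence depends only on $A$; the residual ambiguity reduces to choosing the rank-$4$ saturated sublattice $(P^2_{\et}(X))^\perp \subset ((\locsys{\ZLattice})_s)^{G_\NumberField}$. Its discriminant determines the polarization degree $2\de_X$, and the Torelli theorem (Proposition \ref{torelli}) applied at each $\de_X$ then bounds the number of K3 surfaces arising from each such complement.

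\textbf{Main obstacle.} The hardest step will be bounding the set of rank-$4$ saturated sublattices of $((\locsys{\ZLattice})_s)^{G_\NumberField}$ that can arise as $(P^2_{\et}(X))^\perp$ for some $X/\NumberField$ with good reduction outside $\Places$ and $A^{uKS}_X \cong A$; equivalently, bounding the polarization degrees $\de_X$ over all such $X$. The Grassmannian of rank-$4$ saturated sublattices of a fixed integral lattice is infinite a priori, so the finiteness must come from a careful combination of the unimodularity of $\ZLattice$, the explicit form of the primitive embeddings $\ZLattice_{\de} \hookrightarrow \ZLattice$ from Lemma \ref{inclusions}, and the geometric constraint that the residual rank-$1$ class be ample on an actual K3 surface defined over $\NumberField$. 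Surmounting this lattice-theoretic boundedness problem is the principal novelty required to complete the proof.
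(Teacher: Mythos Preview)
Your overall architecture---uniform Kuga--Satake into a single Siegel space, then Faltings---matches the paper exactly, and your cohomological analysis in the ``Fiber finiteness'' paragraph is on the right track. The gap is precisely where you say it is: you do not know how to bound the rank-$4$ saturated sublattices $(P^2_{\et}(X))^\perp$ inside $((\locsys{\ZLattice})_s)^{G_\NumberField}$, and you correctly note that this Grassmannian is infinite. The paper does \emph{not} solve this problem; it sidesteps it.

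The trick is to abandon the attempt to control the original polarization degree $\de_X$ altogether. Observe that the $\widehat{\Z}$-lattice
\[
M_i \coloneq \bigl(((\locsys{\ZLattice})_{s_i})^{G_\NumberField}\bigr)^{\perp} \subset (\locsys{\ZLattice})_{s_i}
\]
depends only on the abelian variety $A_i$ (plus the Clifford-module data, of which there are finitely many by a lemma of Lan), not on the particular embedding $P^2_{\et}(X)\hookrightarrow (\locsys{\ZLattice})_{s_i}$ or on $\de_X$. Since the rank-$4$ complement of $P^2_{\et}(X)$ is Galois-trivial (your Proposition~\ref{latticeembedding}), one has $((\locsys{\ZLattice})_{s_i})^{G_\NumberField}\supset P^2_{\et}(X)^{G_\NumberField}$ with the same orthogonal complement, and the Tate conjecture (Theorem~\ref{tate}) identifies $M_i$ with $T(X)\otimes\widehat{\Z}$ as $\widehat{\Z}$-lattices. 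Hence $\disc(T(X))$ lies in the finite set $\{\disc(M_i)\}$. Because $\Pic_X$ and $T(X)$ are mutually orthogonal saturated sublattices of the unimodular lattice $H^2(X(\C),\Z)$, their discriminants agree, so $\disc(\Pic_X)$ is bounded. By Cassels there are only finitely many isometry classes for $\Pic_X$.

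The final step is a lattice lemma due to Ogus (Proposition~\ref{poldependsonlattice} in the paper): the Weyl group generated by reflections in $(-2)$-classes acts transitively on the chambers of $\Pic_X\otimes\R$, one of which is the ample cone, so every K3 with Picard lattice isomorphic to a fixed $N$ admits an ample class of degree $D_N$ depending only on $N$. With finitely many $N$'s, this gives finitely many polarization degrees, and Andr\'e's polarized theorem (Theorem~\ref{andre}) finishes each one. Thus the ``principal novelty'' you were seeking is not a bound on your rank-$4$ sublattices but rather the two-step detour: bound $\disc(\Pic_X)$ via $T(X)$, then \emph{re}-polarize using the chamber argument.
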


First we prove that the minimal degree of a polarization on a K3 surface $\ktsurf$ is a function of its Picard lattice $\Pic_\ktsurf$ as a lattice.

\begin{prop}\label{poldependsonlattice} Fix a number field $\NumberField$ and let  $\abspic \subset \kthreelat$ be a primitive sublattice of the K3 lattice with signature $((\textup{rank} (\abspic)-1)+,1-)$.   Let  
\begin{equation}C_\abspic\coloneq  \left\{ v\in \abspic  \left|  \begin{array}{c}v^2 > 0 \textup{ and}  \\ \textup{ $\forall w \textup{ s.t. } w^2 = -2\textup{, } w*v \neq 0 $}\end{array}\right.\right\} 
\end{equation} 
Let $D_\abspic\coloneq  \inf_{v\in C_\abspic} v^2$. 
Then $C_\abspic$ is nonempty, so that $D_\abspic<\infty$.  Any K3 surface $\ktsurf/\NumberField$ with Picard lattice $ \PicardLattice{\ktsurf}{}  \simeq\abspic $ admits a polarization of degree $D_\abspic$.  
\end{prop}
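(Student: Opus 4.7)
The proposition makes two distinct claims that I would address in order: (i) the lattice-theoretic assertion that $C_\abspic$ is non-empty, so that $D_\abspic$ is a well-defined positive integer, and (ii) the geometric assertion that every K3 surface $X/\NumberField$ with $\PicardLattice{\ktsurf}{} \simeq \abspic$ admits a polarization of degree $D_\abspic$.

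For (i), my plan is to exhibit a complex K3 surface with Picard lattice $\abspic$ and extract membership in $C_\abspic$ from its ample classes. By surjectivity of the period map together with Morrison's realizability results, for $\textup{rank}(\abspic) \leq 20$ there exists a complex projective K3 surface $X_0$ with $\Pic(X_0) \simeq \abspic$; pick any ample class $H$ on $X_0$. For any $(-2)$-vector $w \in \Pic(X_0)$, Riemann--Roch for K3 surfaces implies that either $w$ or $-w$ is effective, so $H \cdot w \neq 0$. Hence the image of $H$ in $\abspic$ under the chosen isomorphism lies in $C_\abspic$, verifying non-emptiness. Since $\kthreelat$ is an even lattice, $v^2$ takes values in $2\Z_{>0}$ on $C_\abspic$, so $D_\abspic$ is attained by some $v_0 \in C_\abspic$. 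The edge case $\textup{rank}(\abspic) \geq 21$ is vacuous, as no K3 surface over a number field has Picard rank that large.

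For (ii), I would use a Weyl-group argument to rotate $v_0$ into the ample cone of any given $X$. Transport $v_0$ to a class (still called $v_0$) in $\PicardLattice{\ktsurf}{} \subseteq \Pic(\ktsurf_{\overline{\NumberField}})$. A standard structural result for K3 surfaces asserts that the ample cone of $\ktsurf_{\overline{\NumberField}}$ is a chamber of the Weyl group generated by reflections in $(-2)$-classes, acting on the appropriate connected component of the positive cone in $\Pic(\ktsurf_{\overline{\NumberField}}) \otimes \R$, and that this Weyl group acts simply transitively on chambers. After possibly replacing $v_0$ with $-v_0$ to land in the positive cone component containing an ample class (automatic once $\textup{rank}(\abspic) \geq 3$), I would apply a Weyl element $g$ sending the chamber of $v_0$ to the ample chamber. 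Since $g$ is an isometry, the class $g(v_0)$ has the same square $D_\abspic$ and is ample, furnishing a polarization of degree $D_\abspic$.

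The main obstacle is ensuring that the Weyl group element $g$ can be chosen so that $g(v_0)$ is Galois-invariant, i.e., lies in $\PicardLattice{\ktsurf}{} = \Pic(\ktsurf_{\overline{\NumberField}})^{\AbsoluteGaloisGroup{\NumberField}}$ rather than merely in $\Pic(\ktsurf_{\overline{\NumberField}})$. This is handled by observing that $\AbsoluteGaloisGroup{\NumberField}$ permutes the $(-2)$-curves on $\ktsurf_{\overline{\NumberField}}$ and hence the Weyl chambers of the positive cone; the ample cone of $\ktsurf$ (a chamber containing a Galois-invariant ample class, which exists since $\ktsurf$ is projective over $\NumberField$) is Galois-stable, and so is the chamber containing $v_0 \in \PicardLattice{\ktsurf}{}$. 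A Galois-equivariant $g$ carrying one stable chamber to the other then exists by simple transitivity.
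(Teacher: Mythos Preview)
Your approach is close in spirit to the paper's: both rotate a minimizing vector into the ample cone via a reflection-group argument. The difference is where the argument is run. You work in the geometric Picard lattice $M=\Pic(X_{\overline{\NumberField}})$ and then handle Galois-invariance as a separate step. The paper works entirely inside $N=\Pic_{X/\NumberField}(\NumberField)$: the group $R_N$ is generated by $-\mathrm{id}$ together with reflections in $(-2)$-classes \emph{of $N$}, and Ogus's result \cite[1.10]{Ogu83} gives transitivity on the connected components of $V_N=(N\otimes\R)\setminus\bigcup_{w\in N,\,w^2=-2}w^\perp$, each of which meets the lattice $N$. One of these components is identified with the $K$-ample cone (via Nakai--Moishezon--Kleiman and adjunction), so no passage to $\overline{\NumberField}$ or Galois bookkeeping is needed. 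This also yields non-emptiness of $C_N$ as a purely lattice-theoretic fact; your argument for (i) via surjectivity of the period map is correct but heavier than necessary.

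There is a genuine gap in your step (ii). Membership in $C_N$ only says $v_0\cdot w\neq 0$ for $(-2)$-classes $w\in N$; it says nothing about $(-2)$-classes $w\in M\setminus N$. Hence $v_0$ may lie on a wall of the chamber decomposition of $M\otimes\R$, and then there is no well-defined ``chamber of $v_0$'' on which to run your simple-transitivity argument. Concretely, if $C_1,C_2$ are disjoint Galois-conjugate $(-2)$-curves on $X_{\overline{\NumberField}}$, then $C_1+C_2\in N$ is a $(-4)$-class, not a $(-2)$-class, so $v_0\in C_N$ is permitted to satisfy $v_0\cdot(C_1+C_2)=0$, i.e.\ $v_0\in C_1^\perp$. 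Your Galois-stability claim (``the chamber containing $v_0$ is Galois-stable because $v_0$ is'') then fails: several chambers of $V_M$ contain $v_0$ in their closure and Galois may permute them nontrivially, so the unique Weyl element carrying any one of them to the ample chamber need not be Galois-equivariant, and $g(v_0)$ need not land back in $N$. The fix is precisely the paper's move: run the chamber argument in $V_N$ rather than $V_M$, where $v_0$ is interior by the very definition of $C_N$.
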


\begin{proof} The proof is the same as \cite[2.3.2]{LMS14} and mostly due to Ogus, the only difference is that we are working with $\PicardLattice{\ktsurf}{} =\Pic_{\ktsurf/\NumberField}(\NumberField)$ instead of the Picard group $\Pic(X)$.  A polarization $\lambda\in \Pic_\ktsurf$ is ample iff a multiple $\lambda^n\in \Pic(X)$ is ample.  Ampleness in $\Pic(X)$ is checked by the Nakai-Moishezon-Kleiman (NMK) criterion  \cite[Sec 8, Thm 1.2]{Huy14}.  By adjunction, effective $\NumberField$-rational integral curves $C \subset \ktsurf$ have $[C]^2 \geq -2$.  Since $\ktsurf$ is minimal it suffices to check the NMK criterion with respect to $-2$ curves. Let $w\in \abspic$ be any $-2$-class (not necessarily effective).  

Reflection across $w$ % defined by $r_w(v)\coloneq  v+(v\cdot w)w$ 
is an isometry. We denote the group generated by these reflections and $-id$ by $R_N$.  Let \[V_\abspic  \coloneq  \abspic\otimes \R -\bigcup_{w^2=-2} w^\perp\]  be the open subset (see ~\cite[1.10]{Ogu83}) of $\abspic\otimes \R$ where intersection with any integral $-2$-class is non-zero.  By \cite[1.10]{Ogu83}, $V_N$ is a union of cones.  The group $R_N$ acts transitively on the set of topologically connected components of $V_N$, and each component has non-empty intersection with the lattice $N$. Since one of the components of $V_N$ is the ample cone, the transitivity of the isometry group $R_N$ implies the result.
\end{proof}

This implies the more or less straightforward corollary: 

\begin{corollary} \label{mainprop}Fix $\NumberField,\Places$ as in \ref{mainthmend}, and additionally fix a finite set of lattices $\{N_i\}_{i\leq n}$.  The subset of K3 surfaces $\ktsurf\in \ShafKthree{\NumberField}{\Places}$ which have Picard lattice $\Pic_X \simeq N_i$ for some $i$ is finite.

\end{corollary}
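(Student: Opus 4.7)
The plan is to reduce this corollary directly to Andr\'e's finiteness theorem (Theorem \ref{andre}) via Proposition \ref{poldependsonlattice}. The key observation is that the invariant $D_N = \inf_{v \in C_N} v^2$ defined in that proposition depends only on the isomorphism class of $N$ as an abstract lattice: both the set $C_N$ and the infimum are defined purely in terms of the intersection pairing, so they are preserved under lattice isomorphisms. In particular, if $X/\NumberField$ is a K3 surface with $\Pic_X \simeq N_i$ for some $i$, then $D_{\Pic_X} = D_{N_i}$.

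Next, for any such $X$, the Picard lattice $\Pic_X$ embeds as a primitive sublattice of the K3 lattice $\kthreelat$ via the Chern map — saturation was established in the lemma of Section~\ref{k3moduli}, and the signature is $((\textup{rank}\,\Pic_X - 1)+,\,1-)$ by the Hodge index theorem. Consequently Proposition~\ref{poldependsonlattice} applies to $N = \Pic_X$, and yields a polarization $\pol_X$ on $X$ of degree $D_{\Pic_X} = D_{N_i}$.

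Thus every K3 surface in the subset under consideration admits a polarization whose degree lies in the finite set $\{D_{N_1}, \ldots, D_{N_n}\}$. By Andr\'e's theorem~\ref{andre}, the set $\shafsetkthreepol{K3}{\NumberField}{\Places}{D_{N_i}}$ of polarized K3 surfaces of degree $D_{N_i}$ with good reduction outside $\Places$ is finite for each $i$. Forgetting the polarization and taking the union over $i$ produces a finite subset of $\ShafKthree{\NumberField}{\Places}$ that contains the subset we are considering, which completes the proof.

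There is no serious obstacle in this step: the real content has already been extracted in Proposition~\ref{poldependsonlattice}, which replaces a Picard lattice by a canonically determined minimal polarization degree. The corollary is then simply a packaging of that proposition together with the polarized Shafarevich theorem of Andr\'e.
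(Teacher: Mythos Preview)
Your proof is correct and follows essentially the same route as the paper's own argument: invoke Proposition~\ref{poldependsonlattice} to produce a polarization of degree $D_{N_i}$ depending only on the lattice, then apply Andr\'e's Theorem~\ref{andre} to each of the finitely many degrees and take the union. Your write-up is in fact more careful than the paper's, as you explicitly verify the hypotheses of Proposition~\ref{poldependsonlattice} (primitivity of $\Pic_X$ in $\kthreelat$ via saturation of the Chern map, and the signature via Hodge index) and note that $D_N$ is an invariant of the abstract lattice.
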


\begin{proof} The proposition \ref{poldependsonlattice} shows that a K3 surface $\ktsurf$ with Picard lattice $\Pic_X \simeq N_i$ admits a polarization whose degree is a function of the lattice $N_i$.  Hence if $\Pic_\ktsurf\simeq N_i$, we have that $\ktsurf$ admits some polarization of bounded degree.  Andr\'e's theorem shows that the set of K3's with good reduction outside $\Places$ and having a polarization of bounded degree is finite.
\end{proof}

We return to the proof of \ref{mainthmend}. We will need to make a base change to get level structures to use the Kuga-Satake morphism. We will prove the finiteness of the set 
\begin{equation} \label{finitenessafterbasechange} \left\{\ktsurfext\in \ShafKthree{\ExtensionOfNumberField}{\PlacesOfExtension} \left| \ktsurfext = \ktsurf \otimes \ExtensionOfNumberField\textup{ for some }\ktsurf \in \ShafKthree{\NumberField}{\Places} \right\}\right.\end{equation} %$\ShafKthree{\ExtensionOfNumberField}{\PlacesOfExtension}$ 
where $\ExtensionOfNumberField/\NumberField$ is any Galois extension and where $\PlacesOfExtension$ is the set of all places of $\ExtensionOfNumberField$ over $\Places$.

The next lemma shows that it is enough to prove the finiteness of (\ref{finitenessafterbasechange}). % implies the theorem \ref{mainthmend}. 

\begin{lemma}  \label{finite extension}  Fix $\NumberField,\Places$ and let $\ExtensionOfNumberField/\NumberField$ be any finite Galois extension.  The finiteness of (\ref{finitenessafterbasechange}) implies Theorem \ref{mainthmend}.  % It suffices to prove  \ref{mainthmend} over a finite Galois extension of $\NumberField$. More precisely, if  $\ExtensionOfNumberField /\NumberField$ is a finite Galois extension, and $\PlacesOfExtension$ a finite set of places of $\ExtensionOfNumberField$ which contain the places above $\Places$, then the finiteness of 
%\begin{equation}\label{k3 after extension}\left\{ \ktsurfext/\ExtensionOfNumberField\hspace{1cm}\textup{\parbox{6.5cm}{ with good reduction away from $\PlacesOfExtension$ and \\ admits a level $\co_\de(2^2)$ structure for some $\de$}}\right\}/\sim 
%\end{equation} 

%implies the finiteness of 
%\begin{equation}\label{k3 before extension}\{ \ktsurf/\NumberField\text{ with good reduction away from }\Places\}/\sim. 
%\end{equation}

\end{lemma}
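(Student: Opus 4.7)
The plan is to deduce Theorem~\ref{mainthmend} from Corollary~\ref{mainprop} by producing a finite list of $\Z$-lattices which contains, up to isomorphism, every Picard lattice of a K3 surface in $\shafsetkthree{K3}{\NumberField}{\Places}$. Since good reduction outside $\Places$ forces good reduction outside $\PlacesOfExtension$, base change along $\NumberField\hookrightarrow\ExtensionOfNumberField$ maps $\shafsetkthree{K3}{\NumberField}{\Places}$ into the set (\ref{finitenessafterbasechange}), which is finite by hypothesis. Thus as $\ktsurf$ varies, $\ktsurfext=\ktsurf\otimes_\NumberField\ExtensionOfNumberField$ lies in some finite set $F$ of $\ExtensionOfNumberField$-isomorphism classes, so the lattice $\PicardLattice{\ktsurfext}{}$ takes only finitely many values up to isomorphism.

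Next, by the representability of $\Pic_{\ktsurf/\NumberField}$ (used in the chain of equalities from Section~2, applied to the finite quotient $G:=\GaloisExtension{\ExtensionOfNumberField}{\NumberField}$ of $\AbsoluteGaloisGroup{\NumberField}$) together with the base-change invariance of the intersection form, I would identify
\[
\PicardLattice{\ktsurf}{} \;=\; \PicardLattice{\ktsurfext}{}^{G}
\]
as $\Z$-lattices, where $G$ acts on $\PicardLattice{\ktsurfext}{}$ by isometries through the descent data. Choosing any identification $\ktsurfext\simeq Y$ for $Y\in F$ transports this to a homomorphism $\rho\colon G\to \mathrm{O}(\PicardLattice{Y}{})$ whose $\mathrm{O}(\PicardLattice{Y}{})$-conjugacy class is intrinsic, since different identifications differ by an element of $\Aut(Y)$, which acts on $\PicardLattice{Y}{}$ by an isometry.

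The main remaining step is to bound, for each fixed $\Lambda := \PicardLattice{Y}{}$ with $Y\in F$, the number of $\mathrm{O}(\Lambda)$-conjugacy classes of homomorphisms $\rho\colon G\to\mathrm{O}(\Lambda)$; invariants $\Lambda^{\rho(G)}$ of $\mathrm{O}$-conjugate actions are isometric. For this I would invoke Borel's theorem that an arithmetic group has only finitely many conjugacy classes of finite subgroups. Combined with the finiteness of $F$ and of $|G|$, this yields a finite list of $\Z$-lattices containing every $\PicardLattice{\ktsurf}{}$ up to isomorphism, and Corollary~\ref{mainprop} then concludes. The principal obstacle is this arithmetic-group input: the relevant lattices $\Lambda$ are indefinite of signature $((\mathrm{rank}(\Lambda)-1)+,1-)$, so $\mathrm{O}(\Lambda)$ is genuinely infinite and the finiteness of its conjugacy classes of finite subgroups is a substantive appeal to the theory of arithmetic groups rather than a triviality.
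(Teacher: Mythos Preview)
Your proof is correct and follows essentially the same approach as the paper: both arguments observe that $\PicardLattice{\ktsurf}{}=\PicardLattice{\ktsurfext}{}^{\GaloisExtension{\ExtensionOfNumberField}{\NumberField}}$, use the hypothesis to reduce to finitely many ambient lattices $\PicardLattice{\ktsurfext}{}$, invoke Borel's finiteness of conjugacy classes of finite subgroups in the integral orthogonal group to bound the possible Galois actions, and then apply Corollary~\ref{mainprop}. The only cosmetic difference is that the paper phrases things in terms of conjugacy classes of the \emph{image} in $\textup{SO}(\PicardLattice{\ktsurfext}{})$ rather than of the homomorphism $\rho$, which is immaterial since the fixed sublattice depends only on the image.
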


\begin{proof}Let $\Gamma= \GaloisExtension{\ExtensionOfNumberField}{\NumberField}$, $\ktsurf/\NumberField$ be an element of $\ShafKthree{\NumberField}{\Places}$, and set $\ktsurfext \coloneq  \ktsurf_{\ExtensionOfNumberField}$.  %Then $X'$ admits a $\co_\de(2^2)$ level structure by the definition of $\ExtensionOfNumberField$ and is in the finite set (\ref{k3 after extension}). 
We have that $\Gamma$ is a finite group acting on $\Pic_{\ktsurfext} = \Pic_{\ktsurfext/\ExtensionOfNumberField}(\ExtensionOfNumberField)$ by isometries of the lattice. The fixed sublattice of this action is  $\Pic_\ktsurf = \Pic_{\ktsurf/\NumberField}(\NumberField)$.  The set of conjugacy classes of finite subgroups of the $\Z$-points of any linear algebraic group, e.g. $\text{SO}(\Pic_{\ktsurfext})$, is finite ~\cite[5a]{Bor63}. Thus the set of conjugacy classes of images of $\GaloisExtension{\ExtensionOfNumberField}{\NumberField}$ in $\text{SO}(\Pic_{\ktsurfext})$ is finite, hence the set of possible Picard lattices $\{\Pic_{\ktsurf}| \ktsurf \in \ShafKthree{\NumberField}{\Places}\}$ is finite. Now apply \ref{mainprop}.
\end{proof}

Now we specify the extension $\ExtensionOfNumberField/\NumberField$.  Let $\co_\de$ be the discriminant kernel of $\QLattice_\de$. The subgroup $\co_\de(4) \subset \co_\de $ defines an \'etale cover $\Shim{\SOQ_{\de}}{\co_\de(4)}{\symsp} \rightarrow  \Shim{\SOQ_{\de}}{\co_\de}{\symsp}$, which extends to an \'etale cover of the integral models away from 2.  Since $\co_\de(4)$ is torsion free, $\Shim{\SOQ_{\de}}{\co_\de(4)}{\symsp} $ is a quasi-projective scheme over $\Q$.  Let $K'/\Q$ be the maximal extension of degree at most $ \#|\textup{SO}_{\kthreelat}(\Z/4\Z)| $, and unramified away from $2$.  This is a finite extension by Hermite-Minkowski. Moreover the $\ExtensionOfNumberField$ base extension $(\ktsurfext,\polext)/\ExtensionOfNumberField$ of a polarized K3 surface $(\ktsurf, \pol)/\NumberField$ has level $\co_\de(4)$ structure and is represented by a map of schemes $s\colon  \Spec(\ExtensionOfNumberField) \rightarrow \Shim{\SOQ_{\de}}{\co_\de(4)}{\symsp}$. %  Let $\ExtensionOfNumberField$ be a fixed number field such that $\co(4)$ has a trivialization over $\ExtensionOfNumberField$.  
We will prove the finiteness of (\ref{finitenessafterbasechange}). Using Lemma \ref{finite extension}, we assume from now on that $\NumberField = \ExtensionOfNumberField$. %\margin{do not set $\NumberField = \ExtensionOfNumberField$, see above margin comment}  
%The first step is to refine the theorem of Faltings that the set of isomorphism classes of abelian varieties with a degree $\de'$ polarization is finite.  We will need a statement terms of rational points on the relevant PEL-type Shimura variety.  

%To prove the main theorem it suffices to prove the existence of a polarization of low degree, whose degree is bounded above given $\NumberField$ and $\Places$.   The first step is to show that the degrees of polarizations a K3 may have can be determined from the isomorphism class of its Picard lattice.

%The above proposition shows that the main theorem will follow if we show only finitely many isomorphism classes of Picard lattices can occur in the set $\ShafKthree{\NumberField}{\Places}$.   To show this it suffices, by the following lemma, to bound the discriminant of $\PicardLattice{\ktsurf}{}$.

\begin{lemma}[{\cite[Ch. 9, Thm. 1.1]{Cas78}}] The set of isomorphism classes of lattices of bounded rank and discriminant is finite. 
\end{lemma}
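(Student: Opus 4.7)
The plan is to reduce the statement to the classical finiteness of class numbers of integral quadratic forms of fixed rank and bounded discriminant, via reduction theory. Fix a rank $n$ and a bound $D$ on $|\disc(\Lattice)|$. Choosing a $\Z$-basis identifies a rank-$n$ $\Z$-lattice $\Lattice$ with an $n\times n$ symmetric integer Gram matrix $G$, where $|\det G| = |\disc(\Lattice)|$; two lattices are isomorphic iff their Gram matrices lie in the same $\GL_n(\Z)$-orbit under $G \mapsto P^T G P$, with $P \in \GL_n(\Z)$. So it suffices to show that, for each signature $(p,q)$ with $p+q = n$, there are only finitely many $\GL_n(\Z)$-orbits of integer symmetric matrices of signature $(p,q)$ with $|\det G| \le D$.

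For each such signature, I would invoke Minkowski/Siegel reduction theory for $\GL_n(\Z)$ acting on the space of nondegenerate real symmetric matrices of signature $(p,q)$: every orbit meets a Siegel fundamental domain in which the matrix entries are bounded in terms of $n$, the signature, and $|\det G|$. Concretely, in the positive-definite case, Minkowski reduction yields a basis with $G_{11} G_{22} \cdots G_{nn} \le c_n |\det G|$; since each diagonal $G_{ii}$ is a nonzero integer, and the off-diagonal entries are controlled by Cauchy--Schwarz, the entire Gram matrix has integer entries in a fixed bounded range. The indefinite case is analogous using Siegel's reduction theory. Integrality then forces only finitely many possible Gram matrices, hence only finitely many isomorphism classes of lattices of signature $(p,q)$; summing over the finitely many signatures of rank $n$ gives the result.

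The only real subtlety is the reduction theory in the indefinite case (as opposed to Minkowski's elementary argument for positive-definite forms), but this is exactly what is worked out in Cassels, Chapter 9, so in practice the proof is a citation.
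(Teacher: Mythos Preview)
Your proposal is correct and matches the paper's approach: the paper simply records this lemma as a citation to Cassels, Chapter 9, Theorem 1.1, without further argument, and your sketch is precisely the reduction-theory proof found there. Your final remark that ``in practice the proof is a citation'' is exactly how the paper treats it.
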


The rest of the proof consists of bounding this discriminant using the given hypotheses of \ref{mainthmend}, that $\NumberField$ and $\Places$ are fixed.  We will do this by applying a Kuga-Satake construction to Faltings' finiteness theorem for abelian varieties.

% and  $c (T(\ktsurf))\otimes \Zl  \simeq c_\ell (T(\ktsurf)_\ell ) $, where $T(\ktsurf)_\ell = T(\ktsurf) \otimes \Zl$. 

\begin{prop}[Faltings]\label{restatedfaltings} The Shimura variety $\Shim{\GSpQ}{\cosp}{\gspsymsp}$ has finitely many $\NumberField$ points with good reduction outside of $S$.   
\end{prop}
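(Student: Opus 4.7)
The plan is to invoke the classical Siegel moduli interpretation of $\Shim{\GSpQ}{\cosp}{\gspsymsp}$ and reduce to Faltings' Theorem \ref{faltings}. A $\NumberField$-point corresponds to an isomorphism class of triples $(A,\lambda,\alpha)/\NumberField$, with $A$ an abelian variety of dimension $g=\tfrac{1}{2}\mathrm{rank}_\Z(\gencliffmod)$, $\lambda$ a polarization whose degree lies in a finite set determined by $\cosp$ and $(\gencliffmod,\phi_a)$, and $\alpha$ a $\cosp$-level structure. In particular $\dim A$ is fixed and $\deg\lambda$ is bounded independently of the point.

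First I would verify that $s\in\Shim{\GSpQ}{\cosp}{\gspsymsp}(\NumberField)$ has good reduction at a place $\place\notin\Places$ if and only if the associated $(A_s,\lambda_s)$ does. One direction is immediate by pulling back the universal polarized abelian scheme over the integral canonical model to $\Spec\mathscr{O}_{\NumberField,\place}$; the converse uses the N\'eron mapping property to extend the polarization, and the torsion-freeness of $\cosp$ to extend the level structure, once $A_s$ extends to an abelian scheme. This dictionary is the one step that requires genuine care; the rest of the proof is formal.

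Given this translation, Faltings' Theorem \ref{faltings}, applied with the data $(\NumberField,\Places,g)$ and each of the finitely many possible polarization degrees, yields finiteness of the underlying $(A_s,\lambda_s)$ up to $\NumberField$-isomorphism. To upgrade to finiteness of the full triples I would observe that for any fixed $(A,\lambda)$ the set of $\cosp$-level structures modulo $\Aut(A,\lambda)$ is a quotient of the finite set $\Isom(\gencliffmod\otimes\widehat{\Z},T_f A)/\cosp$ by the finite group $\Aut(A,\lambda)$, hence finite. Summing over the finite list of $(A,\lambda)$ concludes the proof.
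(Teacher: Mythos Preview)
Your proposal is correct and follows essentially the same approach as the paper: interpret the Siegel Shimura variety as a moduli of tuples $(A,\lambda,\alpha)$, apply Faltings' polarized Shafarevich conjecture to bound the pairs $(A,\lambda)$, and then observe that the set of $\cosp$-level structures on a fixed $(A,\lambda)$ over $\NumberField$ is finite. The paper's proof is terser---it does not spell out the good-reduction dictionary or the boundedness of the polarization degree---but the logical skeleton is the same. Your discussion of the converse direction (good reduction of $(A_s,\lambda_s)$ implying good reduction of $s$) is not actually needed here, since only the forward implication is required to feed into Theorem~\ref{faltings}.
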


\begin{proof} The variety $\Shim{\GSpQ}{\cosp}{\gspsymsp} $ parametrizes tuples $(A,\pol,\lstr)$%, $\deg\pol=\de' $
. By the (polarized) Shafarevich conjecture (cf. \cite{Fal84}, \cite[18.1]{Mil86}), we have that this set is finite.  For any fixed $\co'$, the set of $\co'$-level structures over $\NumberField$ is finite.
\end{proof}

\noindent We denote the finite set of abelian varieties from the above proposition by $(\AbelianVariety_i , \pol_i,\lstr_i)$, $i\leq n$. \\

 Let $(\ktsurf, \pol,\co_\de)$ be a $\NumberField$-point $s \rightarrow \ktmod_{\de, \co_\de}$ for any $\de$.  The Torelli map \ref{torelli} and the embeddings \ref{orthoembedding} and \ref{gsprep} define a point $s\rightarrow \Shim{\GSpQ}{\cosp}{\gspsymsp}$ corresponding to the uniform Kuga-Satake abelian variety $(\AbelianVariety^{uKS}_s, \pol^{uKS}_s, \lstr^{uKS}_s )$.  %By construction, $\pol^{uKS}_s$ is principal and $\lstr^{uKS}_s$ is a $\co$ level structure. The following lemma shows that the abelian variety corresponding to $s$ has good reduction outside of $\Places$, so that $(\AbelianVariety^{uKS}_s, \pol^{uKS}_s, \lstr^{uKS}_s )$  is isomorphic to $(\AbelianVariety_i,\pol_i, \lstr_i)$ for some $i$.

\begin{prop} \label{goodreduction}  Let $s\colon \Spec(\NumberField) \rightarrow \ktmoddecoQ{\de}{\co_\de} (\NumberField)$ be the $\NumberField$-point corresponding to a K3 surface $(\ktsurf_s,\pol_s, \lstr_s)$ over $\NumberField$ such that $\ktsurf_s$ has good reduction away from $\Places$.  Then its associated uniform Kuga-Satake abelian variety $A^{uKS}_s$ has good reduction away from $\Places$.  

\end{prop}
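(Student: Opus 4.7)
The plan is to verify good reduction of $A^{uKS}_s$ at every place $\place \notin \Places$ via the Néron-Ogg-Shafarevich criterion. For each such $\place$, choose a rational prime $\ell$ different from the residue characteristic of $\place$; it then suffices to prove that the $\ell$-adic Galois representation
\[\rho_\ell\colon \AbsoluteGaloisGroup{\NumberField} \to \GL\bigl(H^1_\et(A^{uKS}_{s,\overline{\NumberField}}, \Q_\ell)\bigr)\]
is unramified at $\place$.

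By construction of the uniform Kuga-Satake abelian variety (Definition \ref{uniformkugasatake}) and the spin embedding of Shimura data in Lemma \ref{gsprep}, the representation $\rho_\ell$ factors through $\GSpinQ_\QLattice(\Q_\ell)$, and its composition with the natural projection $\GSpinQ_\QLattice \to \SOQ$ coincides with the Galois representation on the stalk $i_\de^*(\locsys{\ZLattice})_s \otimes \Q_\ell$ of the orthogonal local system. By Proposition \ref{latticeembedding}, this stalk decomposes Galois-equivariantly as $P^2_\et(\ktsurf_s, \Q_\ell(1))$ together with a complement on which $\AbsoluteGaloisGroup{\NumberField}$ acts trivially. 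Smooth-proper base change applied to the good model of $\ktsurf_s$ over $\mathscr{O}_{\NumberField,\place}$ yields unramifiedness of $H^2_\et(\ktsurf_{s,\overline{\NumberField}}, \Q_\ell)$ at $\place$, hence of the saturated summand $P^2_\et(\ktsurf_s, \Q_\ell(1))$; the trivial complement is unramified a fortiori. Consequently, the $\SOQ$-valued representation obtained from $\rho_\ell$ is unramified at $\place$.

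To promote this to unramifiedness of $\rho_\ell$ itself, one must control the central $\Gm$-character arising from the kernel of $\GSpinQ_\QLattice \to \SOQ$. The image $\rho_\ell(I_\place)$ lies in this central $\Gm$, so determines a continuous character $I_\place \to \Q_\ell^\times$. Using the compatibility of the weight morphism of $(\GSpinQ_\QLattice, \symsp)$ with the Siegel weight morphism of $(\GSpQ, \gspsymsp)$ under the embedding $i^{KS}$ of Lemma \ref{gsprep}, together with the fact that the universal abelian variety over $\Shim{\GSpQ}{\cosp}{\gspsymsp}$ carries a weight-$1$ Hodge structure, this central character is identified with a power of the $\ell$-adic cyclotomic character. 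Since $\ell$ differs from the residue characteristic of $\place$, the cyclotomic character is unramified at $\place$, and hence so is $\rho_\ell$. The Néron-Ogg-Shafarevich criterion then gives good reduction of $A^{uKS}_s$ at $\place$.

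The main obstacle is the final compatibility: identifying the central $\Gm$-character coming from the spin cover with a power of the cyclotomic character requires a careful tracing of weight morphisms through the Kuga-Satake embedding. A cleaner alternative at places of residue characteristic $p > 2$ is to work integrally, extending $s$ to an $\mathscr{O}_{\NumberField,\place}$-point of the integral canonical model of $\Shim{\GSpQ}{\cosp}{\gspsymsp}$ by chaining the integral Torelli map (Proposition \ref{torelli}), the integral extension of the orthogonal embedding (the corollary following Proposition \ref{orthoembedding}), and the integral Kuga-Satake map; the pullback of the universal abelian scheme then directly furnishes an extension of $A^{uKS}_s$ over $\mathscr{O}_{\NumberField,\place}$, and one needs the Néron-Ogg-Shafarevich argument only at places of residue characteristic $2$.
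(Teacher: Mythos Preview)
Your closing paragraph is exactly the paper's proof. For each $\nu\notin S$ of residue characteristic $p>2$, the good model $(\ktsp,\pol)/\mathscr{O}_{K,\nu}$ yields an $\mathscr{O}_{K,\nu}$-point of $\ktmoddecorel{d}{\co_d}{\Z_{(p)}}$, and composing with the integral chain
\[
\ktmoddecorel{d}{\co_d}{\Z_{(p)}} \longrightarrow \intmodel{\co_d}{L_d}\longrightarrow \intmodel{\co}{L}\longrightarrow \ShimZ{\GSpQ}{\cosp}{\gspsymsp}
\]
directly produces a smooth proper model of $A^{uKS}_s$ over $\mathscr{O}_{K,\nu}$. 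The paper does not treat residue characteristic $2$ separately; since one may harmlessly enlarge $S$ to contain all places above $2$ for the purposes of the main finiteness theorem, no N\'eron--Ogg--Shafarevich argument is ever invoked.

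Your primary route via N\'eron--Ogg--Shafarevich is genuinely different, and the obstacle you flag is real---in fact slightly worse than you suggest. Granting that $\rho_\ell$ factors through $\GSpinQ_L(\Q_\ell)$ (which already requires either the $\ell$-adic realization of the Kuga--Satake embedding from Lemma~\ref{kugasatakelattice} or a lift of $s$ to the $\GSpinQ$ Shimura variety, not just the Shimura-datum embedding), the spinor norm restricted to the central $\Gm\subset\GSpinQ_L$ is the squaring map $z\mapsto z^2$. Identifying the similitude character with $\chi_{\mathrm{cyc}}$ therefore only gives $\rho_\ell(I_\nu)\subset\mu_2$, not triviality: you have not excluded inertia acting by $-1$ on $H^1$, which would mean $A^{uKS}_s$ acquires good reduction only after a ramified quadratic extension. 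Killing this sign needs an extra ingredient (for instance, a sufficiently deep level structure on $A^{uKS}_s$ defined over $K$, which forces the mod-$n$ representation to be trivial and hence rules out $-1$ for $n\ge 3$). The integral-model argument bypasses both the factorization question and the sign ambiguity, which is why the paper---and you, in your last paragraph---take that route.
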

\begin{proof} 

Let $\nu\not\in \Places$ be a place of $\NumberField$ over $p$ and where $(\ktsurf,\lambda,\lstr)$ has  good reduction, i.e. there  is a local model $(\ktsp , \pol)  / \mathscr{O}_{\NumberField,\nu}$.  Let $s \rightarrow \ktmod_{2d, \co_\de,\mathscr{O}_{\NumberField,\nu}}$ be the $\mathscr{O}_{\NumberField,\nu}$-point representing this model.  Composition with 
\[  \ktmod_{2d, \co_\de,\Z_{(p)}} \rightarrow \intmodel{\co_\de}{\QLattice_\de} \rightarrow \intmodel{\co}{\QLattice} \rightarrow  \ShimZ{\GSpQ}{\cosp}{\gspsymsp} 
\] defines a polarized abelian variety $(\AbelianVariety_s, \pol_s,\lstr_s)$ which is a model over $\mathscr{O}_{\NumberField,\nu}$.  Here $\ShimZ{\GSpQ}{\cosp}{\gspsymsp}$ is the local model of $\Shim{\GSpQ}{\cosp}{\gspsymsp}$.
\end{proof}

\begin{corollary}\label{finiteabelianvarieties} For any $\de$, let $s \in \Shim{\SOQ_\de}{\co }{\symsp_\de}$ be the point corresponding to a polarized K3 surface with good reduction away from $\Places$. Then $(\AbelianVariety^{KS}_s, \pol_s^{KS}, \lstr_s^{KS})$ is isomorphic to $(\AbelianVariety_i, \pol_i,\lstr_i)$ for some $i$.
\end{corollary}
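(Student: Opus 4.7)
The strategy is to push the point $s$ all the way to the symplectic Shimura variety via the uniform Kuga-Satake construction, and then to apply the finiteness from Proposition \ref{restatedfaltings}.

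First I would form the composition of maps of $\Q$-schemes
\[
\Spec(\NumberField) \xrightarrow{\,s\,} \Shim{\SOQ_{\de}}{\co_\de}{\symsp_\de} \xrightarrow{\,i_\de\,} \Shim{\SOQ}{\co}{\symsp} \xrightarrow{\,i^{KS}\,} \Shim{\GSpQ}{\cosp}{\gspsymsp},
\]
using the Torelli embedding of \ref{torelli}, the orthogonal embedding of \ref{orthoembedding}, and the Kuga-Satake embedding of \ref{gsprep}. By construction, the pullback of the universal polarized abelian variety $(\RelativeAbelianVariety, \pol, \lstr)$ along this composite is exactly the triple $(\AbelianVariety^{KS}_s, \pol_s^{KS}, \lstr_s^{KS})$, and equivalently this composite gives the $\NumberField$-point of $\Shim{\GSpQ}{\cosp}{\gspsymsp}$ classifying that triple.

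Next I would invoke Proposition \ref{goodreduction}, which says that if the original K3 surface $\ktsurf_s$ has good reduction away from $\Places$ (as assumed in the hypothesis), then $\AbelianVariety^{KS}_s$ also has good reduction away from $\Places$. Concretely, for each finite place $\nu \notin \Places$ lying over a prime $p > 2$, the local model $(\ktsp, \pol)/\mathscr{O}_{\NumberField,\nu}$ extends the K3 point to the integral model $\ktmod_{\de,\co_\de,\Z_{(p)}}$, and composing with the extensions of $i_\de$ and $i^{KS}$ to the integral canonical models (which exist by \ref{integralmodel} and the extension property) produces an $\mathscr{O}_{\NumberField,\nu}$-point of $\ShimZ{\GSpQ}{\cosp}{\gspsymsp}$, i.e.\ good reduction of the Kuga-Satake abelian variety at $\nu$. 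The primes above $2$ may be absorbed into $\Places$ once and for all, so they cause no issue.

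Finally I would apply Proposition \ref{restatedfaltings} (the polarized Shafarevich conjecture of Faltings together with the finiteness of $\cosp$-level structures over $\NumberField$): the set of $\NumberField$-points of $\Shim{\GSpQ}{\cosp}{\gspsymsp}$ whose associated polarized abelian variety with level structure has good reduction away from $\Places$ is precisely the finite collection $\{(\AbelianVariety_i, \pol_i, \lstr_i)\}_{i \le n}$. Hence $(\AbelianVariety^{KS}_s, \pol_s^{KS}, \lstr_s^{KS})$ must equal $(\AbelianVariety_i, \pol_i, \lstr_i)$ for some $i$, as claimed. There is no real obstacle here beyond carefully collating the references: the content is already contained in \ref{goodreduction} and \ref{restatedfaltings}, and the corollary is essentially a bookkeeping consequence.
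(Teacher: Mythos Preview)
Your proposal is correct and matches the paper's approach: the corollary is stated without proof in the paper precisely because it is the immediate combination of Proposition \ref{goodreduction} (good reduction of the uniform Kuga--Satake abelian variety) with Proposition \ref{restatedfaltings} (Faltings' finiteness), exactly as you spell out. Your only addition is the remark about absorbing primes above $2$ into $\Places$, which is implicit in the paper's setup and harmless here.
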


\begin{remark}  Since $|\disc(\PicardLattice{\ktsurf}{})|_\ell = |\disc(c_\ell(\PicardLattice{\ktsurf}{}))|_\ell$ we can compute the $\ell$-adic valuation of $\disc(\PicardLattice{\ktsurf}{})$ from its image in $\ell$-adic cohomology.  Doing so for all $\ell$ will allow us to bound $\disc(\PicardLattice{\ktsurf}{})$. \end{remark}

\begin{definition} Let $\ktsurf /\NumberField$ be a surface over a number field $\NumberField$. The lattice of \emph{transcendental cycles} $T(\ktsurf)$ is the orthogonal complement of the image of $c\colon \PicardLattice{\ktsurf}{} \rightarrow H^2(\ktsurf(\C),\Z(1))$
\[T(\ktsurf) \coloneq  \PicardLattice{\ktsurf}{}^\perp \subset H^2(\ktsurf(\C),\Z(1)).\]
\end{definition}

\begin{prop} Let $\NumberField,\Places$ be as above.  There is a finite set \[\rho_i \colon  \AbsoluteGaloisGroup{\NumberField} \to \GL (M_i)\]  of $\AbsoluteGaloisGroup{\NumberField}$  representations with coefficients in $\wh{\Z}$  such that for any $\ktsurf/\NumberField$ satisfying the hypotheses of \ref{mainthmend}, we have an isometry \[T ( \ktsurf)\otimes\widehat{\Z} \simeq M_i\] of $\AbsoluteGaloisGroup{\NumberField}$ representations for some $i$. 
\end{prop}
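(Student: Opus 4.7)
The plan is to realize $T(\ktsurf) \otimes \widehat{\Z}$ as a canonical Galois-sublattice of $M \coloneq (\locsys{\ZLattice})_s$ (where $s$ is the point of $\Shim{\SOQ}{\co}{\symsp}$ corresponding to $\ktsurf$) and to show that $M$ itself has only finitely many $\AbsoluteGaloisGroup{\NumberField}$-isomorphism classes.

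First I would invoke Corollary \ref{finiteabelianvarieties}: as $\ktsurf$ ranges over $\ShafKthree{\NumberField}{\Places}$ (with polarizations of all degrees), the uniform Kuga-Satake abelian variety $\UniversalRelativeAbelianVariety{s}^{uKS}$, equipped with its polarization and level structure, takes only finitely many isomorphism classes over $\NumberField$. Consequently the $\AbsoluteGaloisGroup{\NumberField}$-module $H^1_\et(\UniversalRelativeAbelianVariety{s}^{uKS}, \widehat{\Z})$ and its endomorphism module $\End_{\gencliff(\ZLattice)}(H^1_\et(\UniversalRelativeAbelianVariety{s}^{uKS}, \widehat{\Z}))$ take only finitely many isomorphism classes. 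Since $\ZLattice = \EEightLattice^2 \oplus \HyperbolicPlaneLattice^2 \oplus \langle 1 \rangle^5$ is unimodular, Lemma \ref{kugasatakelattice} (applied $\ell$-adically and assembled over all $\ell$) realizes $\locsys{\ZLattice}$ as a canonical sub-$\widehat{\Z}$-sheaf of $\End_{\gencliff(\ZLattice)}(R^1 f_* \widehat{\Z})$. Passing to fibers at $s$ then realizes $M$ canonically as a Galois sublattice of $\End_{\gencliff(\ZLattice)}(H^1_\et(\UniversalRelativeAbelianVariety{s}^{uKS}, \widehat{\Z}))$, and therefore $M$ itself takes only finitely many isomorphism classes as a Galois lattice with bilinear form.

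Second I would identify $T(\ktsurf) \otimes \widehat{\Z}$ with $(M^{\AbsoluteGaloisGroup{\NumberField}})^\perp$, the orthogonal complement inside $M$ of the Galois-invariants. By Proposition \ref{latticeembedding}, the primitive cohomology $P^2_\et(\ktsurf, \widehat{\Z}(1))$ embeds isometrically in $M$ with orthogonal complement $P^{2,\perp}$ trivial as a Galois module; by primitivity of $\ZLattice_\de \hookrightarrow \ZLattice$ (Lemma \ref{inclusions}) this embedding is saturated. Since $\pol$ is primitive and $H^2(\ktsurf,\Z(1))$ is unimodular, $P^2 \otimes \widehat{\Z}$ is saturated in $H^2 \otimes \widehat{\Z}$, and $T(\ktsurf) \otimes \widehat{\Z} = \Pic(\ktsurf)^\perp \otimes \widehat{\Z}$ is saturated in both, and hence in $M$. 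Applying the Tate conjecture (Theorem \ref{tate}) $\ell$-adically, $(P^2 \otimes \Ql)^{\AbsoluteGaloisGroup{\NumberField}}$ is the image of $\Pic(\ktsurf) \otimes \Ql$ modulo $\langle \pol \rangle$, so $(M \otimes \Ql)^{\AbsoluteGaloisGroup{\NumberField}} = (P^2 \otimes \Ql)^{\AbsoluteGaloisGroup{\NumberField}} \oplus P^{2,\perp} \otimes \Ql$ is orthogonal to $T(\ktsurf) \otimes \Ql$. A rank count using $\operatorname{rank}(M)=25$ and $\operatorname{rank}(T)=22-\rho(\ktsurf)=25-(\rho(\ktsurf)+3)$ forces $T(\ktsurf) \otimes \Ql = ((M \otimes \Ql)^{\AbsoluteGaloisGroup{\NumberField}})^\perp$ for each $\ell$. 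Since both $T(\ktsurf) \otimes \widehat{\Z}$ and $(M^{\AbsoluteGaloisGroup{\NumberField}})^\perp$ are saturated sublattices of $M$ with equal $\Q$-spans, they coincide.

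Therefore the $\AbsoluteGaloisGroup{\NumberField}$-isometry class of $T(\ktsurf) \otimes \widehat{\Z} = (M^{\AbsoluteGaloisGroup{\NumberField}})^\perp$ is determined by that of $M$, which takes only finitely many values, yielding the finite list $\{M_i\}$ claimed. The main obstacle is the second step: upgrading the $\ell$-adic Tate characterization to the integral identity $T(\ktsurf) \otimes \widehat{\Z} = (M^{\AbsoluteGaloisGroup{\NumberField}})^\perp$, which requires propagating saturation through the chain $T \hookrightarrow P^2 \hookrightarrow M$ and ultimately rests on the primitivity of $\ZLattice_\de \hookrightarrow \ZLattice$ from Lemma \ref{inclusions}.
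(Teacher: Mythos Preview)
Your overall strategy coincides with the paper's: realize $M = (\locsys{\ZLattice_{\widehat{\Z}}})_s$, show it takes only finitely many Galois-isometry classes, and then identify $T(\ktsurf)\otimes\widehat{\Z}$ with $(M^{\AbsoluteGaloisGroup{\NumberField}})^\perp$. Your treatment of the second identification, with explicit attention to saturation and the rank count, is somewhat more detailed than the paper's but follows the same line.

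There is, however, a gap in your first step. Corollary~\ref{finiteabelianvarieties} bounds only the isomorphism classes of triples $(\RelativeAbelianVariety_i,\pol_i,\lstr_i)$ as polarized abelian varieties with level structure; the $\gencliff(\ZLattice)$-module structure on $H^1_{\et}(\RelativeAbelianVariety_s^{uKS},\widehat{\Z})$ is \emph{extra} data, not recorded by the image point in $\Shim{\GSpQ}{\cosp}{\gspsymsp}$. So your inference ``Consequently \ldots\ $\End_{\gencliff(\ZLattice)}(H^1_{\et}(\RelativeAbelianVariety_s^{uKS},\widehat{\Z}))$ take only finitely many isomorphism classes'' is unjustified, and with it the finiteness of $M$: two $\NumberField$-points of $\Shim{\SOQ}{\co}{\symsp}$ mapping to the same Siegel point can a priori carry non-isomorphic $\gencliff(\ZLattice)$-actions and hence non-isomorphic fibers of $\locsys{\ZLattice_{\widehat{\Z}}}$. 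The paper closes this gap by observing that the $\gencliff(\ZLattice)$-action arises from a ring homomorphism $\phi\colon \gencliff(\ZLattice)\to \End(H^1(\RelativeAbelianVariety_i,\Z))$ into the integral endomorphisms, and that for each fixed $\RelativeAbelianVariety_i$ the set of such $\phi$ is finite by \cite[1.3.3.7]{Lan13}; since the sublattice $M$ is determined by the pair $(\RelativeAbelianVariety_i,\phi)$, the finiteness of $M$ follows. Once you insert this step, your argument is complete and essentially identical to the paper's.
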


\begin{proof}
Let $(\AbelianVariety_i,\pol_i,\lstr_i)$ be the finite set of polarized abelian varieties from \ref{restatedfaltings}.
For a fixed $\AbelianVariety_i$, the set of $\phi\colon C(\ZLattice)\to \End(H^1(\AbelianVariety_i,\Z)) $ is finite by \cite[1.3.3.7]{Lan13}.  Since $(\ZLattice_{i})_{\widehat{\Z}} \subset \End_{C(\ZLattice)}(H^1_{\et}(\AbelianVariety_i,\widehat{\Z}))$ is determined by $\phi$, it suffices to show that each $(\ZLattice_{i})_{\widehat{\Z}}$ determines finitely many Galois representations $M_i$ in the list above. In fact, each $(\ZLattice_{i})_{\widehat{\Z}}$ determines one Galois representation \[M_i = \left(((\ZLattice_{i})_{\widehat{\Z}})^{\AbsoluteGaloisGroup{\NumberField}}\right)^\perp\] where the orthogonal complement is taken in the lattice $(\ZLattice_{i})_{\widehat{\Z}}$.

By Lemmas \ref{finiteabelianvarieties} and \ref{latticeembedding}, for any point $(\ktsurf,\pol,\lstr )$ of $\Shim{\SOQ_\de}{\co_\de}{\symsp_\de}$, we have Galois equivariant, metric embeddings
\[i_\de\colon  P^2_\et(\ktsurf , \widehat{\Z}(1)) \rightarrow   (\ZLattice_{i})_{\widehat{\Z}}\rightarrow \End_{C(\ZLattice)} (H_{\et}^1 (\AbelianVariety_i ,\widehat{\Z}))\] 
where $(\ZLattice_{i})_{\widehat{\Z}}$ is the $\AbsoluteGaloisGroup{\NumberField}$-submodule of $ \End_{C(\ZLattice)}(H^1_{\et} (\AbelianVariety_i,\widehat{\Z}))$ defined by \ref{kugasatakelattice}.  The Tate conjecture for K3  surfaces over number fields shows that the transcendental lattice $T(\ktsurf)_{\widehat{\Z}}$ is isomorphic to $(P^2_{\et}(\ktsurf,\widehat{\Z}(1))^{\AbsoluteGaloisGroup{\NumberField}})^\perp $ (the orthogonal complement taken in $P^2_{\et}(\ktsurf,\widehat{\Z}(1))$).  

We have that %\cite[6.5]{MP14a} or  \cite[4.9]{MP14b}, 
$P^2_\et(\ktsurf , {\widehat{\Z}}(1)) ^\perp \subset (\ZLattice_{i})_{\widehat{\Z}}$ is  trivial as a $\AbsoluteGaloisGroup{\NumberField}$ representation by \ref{latticeembedding}. Hence  $T(X)\otimes \widehat{\Z} \simeq\left( ((\ZLattice_{i})_{\widehat{\Z}})^{\AbsoluteGaloisGroup{\NumberField}}\right)^\perp$.%  For a fixed surface $\ktsurf$, this is true for some $i$ and all $\ell$ so the proposition follows.  
\end{proof}

\begin{corollary}\label{transcendentaldiscriminant}For $\NumberField, \Places$ as above, there is an upper bound $\discbound$ such that for any $\ktsurf$, we have $\disc (T(\ktsurf)) \leq \discbound$.  This bound is determined by the set $(A_i,\pol_i,\lstr_i)$, $i\leq n$ so that $\discbound$ depends only on $\NumberField$ and $\Places$.
\end{corollary}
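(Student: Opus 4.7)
The plan is to deduce this corollary directly from the preceding proposition, which already supplies a finite list of Galois-equivariant $\widehat{\Z}$-lattices $M_i$ such that every K3 surface $\ktsurf$ satisfying the hypotheses of Theorem \ref{mainthmend} fits into an isometric identification $T(\ktsurf)\otimes\widehat{\Z}\simeq M_i$ for some $i$. Since this list is finite and each $M_i$ has a well-defined discriminant, the corollary will follow once we convert the isometry on the level of $\widehat{\Z}$-lattices into a bound for the integer $\disc(T(\ktsurf))$.

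The key observation is the elementary compatibility that, for a non-degenerate $\Z$-lattice $\Lambda$ of finite rank and a prime $\ell$, the $\ell$-adic valuation $v_\ell(\disc(\Lambda))$ agrees with $v_\ell(\disc(\Lambda\otimes\Zl))$. Applying this to $\Lambda = T(\ktsurf)$ and reading off the $\ell$-adic component of the isometry $T(\ktsurf)\otimes\widehat{\Z}\simeq M_i$, one obtains
\[ v_\ell(\disc(T(\ktsurf))) = v_\ell(\disc(M_i\otimes_{\widehat{\Z}}\Zl)) \]
for every prime $\ell$, so that $|\disc(T(\ktsurf))| = \prod_\ell \ell^{v_\ell(\disc(M_i))}$. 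This is a finite positive integer, since $M_i$ sits as a sublattice of the non-degenerate finite-rank lattice $(\ZLattice_i)_{\widehat{\Z}}$ and the ambient pairing restricts non-degenerately to the orthogonal decomposition into $\AbsoluteGaloisGroup{\NumberField}$-invariants and their complement; in particular only finitely many $\ell$ contribute to the product.

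The proof then concludes by setting $\discbound \coloneq \max_i \prod_\ell \ell^{v_\ell(\disc(M_i))}$. The maximum is taken over the finite index set provided by the preceding proposition, which is in turn constructed purely from the finite list of Kuga-Satake abelian varieties $(\AbelianVariety_i,\pol_i,\lstr_i)$ furnished by Faltings in Proposition \ref{restatedfaltings}, and these depend only on $\NumberField$ and $\Places$. I do not anticipate any serious obstacle: the statement is essentially a bookkeeping consequence of the previous proposition, and the only point requiring care is the non-degeneracy of each $M_i$, which is immediate from its construction as the orthogonal complement of the Galois invariants inside a non-degenerate lattice.
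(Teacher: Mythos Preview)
Your proposal is correct and follows essentially the same approach as the paper: both arguments invoke the preceding proposition to obtain an isometry $T(\ktsurf)\otimes\widehat{\Z}\simeq M_i$ and then read off that $\disc(T(\ktsurf))$ is determined by $M_i$, hence lies in a finite set depending only on $\NumberField$ and $\Places$. The paper's proof is a one-liner asserting $\disc(T(\ktsurf))=\disc(M_i)$ directly; your version simply unpacks this equality prime-by-prime via $v_\ell(\disc(\Lambda))=v_\ell(\disc(\Lambda\otimes\Zl))$, which is the same content with more detail filled in.
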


\begin{proof} By the  proposition above we have $T(X) \otimes \widehat{\Z} \simeq M_i$ as a $\widehat{\Z}$-lattice, in particular $\disc(T(X)) = \disc( M_i)$. Since the set of $M_i$ is finite, the conclusion follows.
\end{proof}

%Note that because $T(X)$ is an integral lattice, this shows a posteriori that the $\wh{\Z}$-lattice $M_i$ has finite discriminant; in particular for sufficiently large $\ell$, disc$((M_i)_{\Zl})$ is trivial.  One could also observe that the lattices $M_i$ are inherently integral; let $\Lattice_i' \subset \ZLattice_i\subset \End(H^1(A_i))$ be the sublattice of endomorphisms of $H^1(A_i)$ which come from $\End_\NumberField(A_i)$, that is actual geometric endomorphisms. Then $M_i$ is the orthogonal complement of $\Lattice'_i$. 

\begin{corollary} For $\ktsurf$ satisfying the hypotheses of Theorem \ref{mainthmend}, the discriminant of $\PicardLattice{\ktsurf}{}$ is bounded as a function depending only on $\NumberField$ and $S$.  
 \end{corollary}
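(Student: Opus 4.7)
The plan is to use the unimodularity of the K3 lattice to transfer the discriminant bound from the transcendental lattice to the Picard lattice. Recall that $H^2(\ktsurf(\C),\Z(1))$ is isometric to the K3 lattice $\kthreelat = \EEightLattice^2 \oplus \HyperbolicPlaneLattice^3$, which is unimodular. By the saturation lemma stated earlier in the paper, the image $c(\PicardLattice{\ktsurf}{})$ is a primitive (saturated) sublattice of $H^2(\ktsurf(\C),\Z(1))$, and by definition $T(\ktsurf) = \PicardLattice{\ktsurf}{}^\perp$ inside this unimodular ambient lattice.

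The key lattice-theoretic fact I would invoke is that for any primitive sublattice $M$ of a unimodular lattice $L$, the discriminant groups $M^\vee/M$ and $(M^\perp)^\vee/M^\perp$ are (anti-)isomorphic; in particular $|\disc(M)| = |\disc(M^\perp)|$. Applying this with $L = H^2(\ktsurf(\C),\Z(1))$, $M = \PicardLattice{\ktsurf}{}$, and $M^\perp = T(\ktsurf)$ yields the identity
\[
|\disc(\PicardLattice{\ktsurf}{})| = |\disc(T(\ktsurf))|.
\]

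By Corollary \ref{transcendentaldiscriminant}, there is a constant $\discbound$ depending only on $\NumberField$ and $\Places$ such that $|\disc(T(\ktsurf))| \leq \discbound$ for every $\ktsurf$ satisfying the hypotheses of Theorem \ref{mainthmend}. Combining this with the displayed equality gives $|\disc(\PicardLattice{\ktsurf}{})| \leq \discbound$, which is the desired bound. There is essentially no obstacle here: once the primitivity and unimodularity inputs are in place, the argument is purely formal, and all the substantial work has already been carried out in the preceding cohomological analysis that bounds the discriminant of $T(\ktsurf)$.
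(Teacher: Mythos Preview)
Your proposal is correct and follows essentially the same approach as the paper: both use the standard fact that a primitive sublattice of a unimodular lattice and its orthogonal complement have equal discriminants, applied to $\PicardLattice{\ktsurf}{}$ and $T(\ktsurf)$ inside the unimodular lattice $H^2(\ktsurf(\C),\Z(1))$, and then invoke Corollary~\ref{transcendentaldiscriminant}. The only cosmetic difference is that the paper phrases the lattice identity via the index $\disc(N)=\lvert M/(N+N')\rvert=\disc(N')$, whereas you phrase it via the (anti-)isomorphism of discriminant groups; these are equivalent formulations of the same result.
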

 
\begin{proof} For two mutually orthogonal and saturated sublattices $N$ and $N'$, whose sum is finite index in a unimodular lattice $M$, we have
\[\disc(N) = \left| \frac{M}{N+N'}\right| =\disc (N').\]
This applies to $\Pic_\ktsurf$ and $T(X)$ as sublattices of $H^2(\ktsurf(\C), \Z(1))$.  Hence the discriminant of $\Pic_X$ is equal to the discriminant of $T(X)$, which is constrained to a finite set by \ref{transcendentaldiscriminant}.
 \end{proof}

\begin{corollary}
The set of isomorphism classes of lattices $\Pic_\ktsurf$ where $\ktsurf $ satisfies the hypotheses of \ref{mainthmend} is finite.  The main theorem \ref{mainthmend} now follows from \ref{mainprop}.
\end{corollary}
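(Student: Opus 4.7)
The plan is to combine the discriminant bound of the previous corollary with a universal rank bound, then apply the finiteness of lattices of bounded rank and discriminant. First, I observe that for every K3 surface $\ktsurf/\NumberField$ the Picard lattice $\PicardLattice{\ktsurf}{}$ embeds isometrically into $H^2(\ktsurf(\C),\Z(1)) \simeq \kthreelat$ via the Chern map $c$, so its rank is uniformly bounded by $22$, independently of $\ktsurf$. Second, the preceding corollary has already established that $\disc(\PicardLattice{\ktsurf}{})$ is bounded by a constant $\discbound$ depending only on $\NumberField$ and $\Places$. Feeding these two bounds into the Cassels lemma quoted above (the set of $\Z$-lattices of bounded rank and bounded discriminant contains only finitely many isomorphism classes) produces a finite list $\{N_i\}_{i\leq n}$ of lattices that exhausts the possible Picard lattices $\PicardLattice{\ktsurf}{}$ for $\ktsurf \in \ShafKthree{\NumberField}{\Places}$.

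With this finite list in hand, the main theorem \ref{mainthmend} follows immediately from \ref{mainprop}: that proposition states precisely that, once the admissible Picard lattices are restricted to a finite collection, the subset of K3 surfaces in $\ShafKthree{\NumberField}{\Places}$ whose Picard lattice is isomorphic to some $N_i$ is itself finite. By the preceding paragraph every element of $\ShafKthree{\NumberField}{\Places}$ satisfies this hypothesis, so the entire set is finite.

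No genuine obstacle arises at this last assembly step; the technical heart of the argument was concentrated in the preceding corollaries, where Faltings' finiteness theorem, the uniform Kuga-Satake construction, and the Tate conjecture for K3 surfaces were combined to identify $T(\ktsurf)\otimes \wh{\Z}$ with the orthogonal complement of the Galois invariants inside one of finitely many Kuga-Satake lattices $(\ZLattice_i)_{\wh{\Z}}$, and where the unimodularity of $(\kthreelat,\cup)$ translated the resulting bound on $\disc(T(\ktsurf))$ into a bound on $\disc(\PicardLattice{\ktsurf}{})$. With those inputs established, the present corollary is a clean combinatorial consequence: a uniform rank bound plus a uniform discriminant bound plus Cassels' lemma produces the required finite list of Picard lattices, and \ref{mainprop} (itself relying on Ogus's description of the ample cone together with Andr\'e's polarized Shafarevich theorem for polarizations of the resulting uniformly bounded degree $D_{N_i}$) completes the proof.
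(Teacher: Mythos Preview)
Your proposal is correct and matches the paper's intended argument: the paper states this final corollary without proof, relying implicitly on precisely the ingredients you spell out---the rank bound from the embedding into $\kthreelat$, the discriminant bound from the preceding corollary, Cassels' lemma on finiteness of lattices of bounded rank and discriminant, and then \ref{mainprop}. Your third paragraph is a fair summary of how the earlier results feed in, and nothing in your write-up diverges from the paper's approach.
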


% \begin{proof}
 
%\end{proof}

%\begin{corollary} Let $\NumberField,\Places$ be as above. There exists a bound $\poldegbound$ depending only on $\NumberField$ and $\Places$ such that for any $\ktsurf\in\ShafKthree{\NumberField}{\Places}$, there exists a polarization of degree $2\de \leq \poldegbound$ on $\ktsurf$. \end{corollary}
 
% \begin{proof} Apply \ref{poldependsonlattice} to the previous corollary to show that the degree of a minimal polarization depends only on  $\Pic_\ktsurf$ as a lattice.% and not on $\ktsurf$.  \end{proof}
 
%It follows that for a fixed number field $\NumberField$ and set of places $\Places$, there exists some upper bound $M_{\NumberField,\Places}$ on the degree of the smallest positive polarization that a K3  surface can have.  Thus for each isomorphism class of  K3  surface $\ktsurf$ satisfying the hypotheses of \ref{mainthmend}, there exists a primitive positive polarization $\pol\in NS(\ktsurf)^{sat}$ of degree $\de \leq M_{\NumberField, \Places}$.   Therefore $\ktsurf$ is isomorphic to a  K3  surface on one of the finitely many families $\ktmod_\de$, for $\de\leq M_{\NumberField,\Places}$. We are done by Andr\'e's theorem (\ref{andre}). 

\section{References}
\begin{biblist}*{labels={alphabetic}}
\bib{And96}{article}{
   author={Andr{\'e}, Yves},
   title={On the Shafarevich and Tate conjectures for hyper-K\"ahler
   varieties},
   journal={Math. Ann.},
   volume={305},
   date={1996},
   number={2},
   pages={205--248},
   issn={0025-5831},
   review={\MR{1391213 (97a: 14010)}},
   doi={10.1007/BF01444219},
}

\bib{BBD85}{book}{
  author={Beauville, A.}
  author={Bourguignon, J.-P. }
  author={Demazure, M.},
  title={G\'eom\'etrie des {S}urfaces {K}3: {M}odules et {P}\'eriodes},
  series={Ast\'erisque},
  volume={126},
  publisher={Soc. Math. de France},
  year={1985},
  }
 
\bib{BLR90}{book}{
   author={Bosch, Siegfried},
   author={L{\"u}tkebohmert, Werner},
   author={Raynaud, Michel},
   title={N\'eron models},
   series={Ergebnisse der Mathematik und ihrer Grenzgebiete (3) [Results in
   Mathematics and Related Areas (3)]},
   volume={21},
   publisher={Springer-Verlag, Berlin},
   date={1990},
   pages={x+325},
   isbn={3-540-50587-3}
}

\bib{Bor63}{article}{
   author={Borel, Armand},
   title={Arithmetic properties of linear algebraic groups},
   conference={
      title={Proc. Internat. Congr. Mathematicians},
      address={Stockholm},
      date={1962},
   },
   book={
      publisher={Inst. Mittag-Leffler, Djursholm},
   },
   date={1963},
   pages={10--22},
   review={\MR{0175901}},
}

\bib{Cas78}{book}{
   author={Cassels, J. W. S.},
   title={Rational quadratic forms},
   series={London Mathematical Society Monographs},
   volume={13},
   publisher={Academic Press, Inc. [Harcourt Brace Jovanovich, Publishers],
   London-New York},
   date={1978},
   pages={xvi+413},
   isbn={0-12-163260-1},
   review={\MR{522835}},
}
     
\bib{Del72}{article}{
   author={Deligne, Pierre},
   title={La conjecture de Weil pour les surfaces  K3 },
   language={French},
   journal={Invent. Math.},
   volume={15},
   date={1972},
   pages={206--226},
   issn={0020-9910},
   review={\MR{0296076 (45 \#5137)}},
}

\bib{Del79}{article}{
   author={Deligne, Pierre},
   title={Vari\'et\'es de Shimura: interpr\'etation modulaire, et techniques
   de construction de mod\`eles canoniques},
   language={French},
   conference={
      title={Automorphic forms, representations and $L$-functions (Proc.
      Sympos. Pure Math., Oregon State Univ., Corvallis, Ore., 1977), Part
      2},
   },
   book={
%      series={Proc. Sympos. Pure Math., XXIII},
      publisher={Amer. Math. Soc.},
      place={Providence, R.I.},
   },
   date={1979},
   pages={247--289},
   review={\MR{546620 (81i:10032)}},
}

\bib{Fal84}{article}{
   author={Faltings, Gerd},
   title={Finiteness theorems for abelian varieties over number fields},
   note={Translated from the German original [Invent.\ Math.\ {\bf 73}
   (1983), no.\ 3, 349--366; ibid.\ {\bf 75} (1984), no.\ 2, 381; MR
   85g:11026ab] by Edward Shipz},
   conference={
      title={Arithmetic geometry},
      address={Storrs, Conn.},
      date={1984},
   },
   book={
      publisher={Springer, New York},
   },
   date={1986},
   pages={9--27},
   review={\MR{861971}},
}

\bib{Huy14}{article}{
   author={Huybrechts, Daniel},
   title={Lectures on K3 surfaces},
   note={Lecture notes},
   date={2014},
   eprint={http://www.math.uni-bonn.de/people/huybrech/K3Global.pdf}
   }

\bib{KS67}{article}{
   author={Kuga, Michio},
   author={Satake, Ichir{\^o}},
   title={Abelian varieties attached to polarized $K_{3}$-surfaces},
   journal={Math. Ann.},
   volume={169},
   date={1967},
   pages={239--242},
   issn={0025-5831},
   review={\MR{0210717 (35 \#1603)}},
}

\bib{Lan13}{book}{
   author={Lan, Kai-Wen},
   title={Arithmetic compactifications of PEL-type Shimura varieties},
   series={London Mathematical Society Monographs Series},
   volume={36},
   publisher={Princeton University Press, Princeton, NJ},
   date={2013},
   pages={xxvi+561},
   isbn={978-0-691-15654-5},
   review={\MR{3186092}},
}

\bib{LMS14}{article}{
   author={Lieblich, Max},
   author={Maulik, Davesh},
   author={Snowden, Andrew},
   title={Finiteness of K3 surfaces and the Tate conjecture},
   language={English, with English and French summaries},
   journal={Ann. Sci. \'Ec. Norm. Sup\'er. (4)},
   volume={47},
   date={2014},
   number={2},
   pages={285--308},
   issn={0012-9593},
   review={\MR{3215924}},
}

\bib{Mil86}{article}{
   author={Milne, J. S.},
   title={Abelian varieties},
   conference={
      title={Arithmetic geometry},
      address={Storrs, Conn.},
      date={1984}, },
   book={publisher={Springer, New York}, },
   date={1986},
   pages={103--150},
   review={\MR{861974}},
}

\bib{Mil05}{article}{
   author={Milne, J. S.},
   title={Introduction to Shimura varieties},
   conference={
      title={Harmonic analysis, the trace formula, and Shimura varieties},
   },
   book={
      series={Clay Math. Proc.},
      volume={4},
      publisher={Amer. Math. Soc., Providence, RI},
   },
   date={2005},
   pages={265--378},
   review={\MR{2192012 (2006m:11087)}},
}

\bib{Moo98}{article}{
   author={Moonen, Ben},
   title={Models of Shimura varieties in mixed characteristics},
   conference={
      title={Galois representations in arithmetic algebraic geometry
      (Durham, 1996)},
   },
   book={
      series={London Math. Soc. Lecture Note Ser.},
      volume={254},
      publisher={Cambridge Univ. Press, Cambridge},
   },
   date={1998},
   pages={267--350},
   review={\MR{1696489 (2000e:11077)}},
   doi={10.1017/CBO9780511662010.008},
}

%\bib{MP14a}{article}{
%author={Madapusi Pera, Keerthi},
%title={Integral canonical models for Spin Shimura varieties},
%  note={Preprint},
%  eprint={http://www.math.harvard.edu/~keerthi/papers/reg.pdf},
%  date={2014},
%  pages={52}
%}

%\bib{MP14b}{article}{
%author = {Madapusi Pera, Keerthi},
%title = {The Tate conjecture for K3 surfaces in odd characteristic},
%eprint={http://www.math.harvard.edu/~keerthi/papers/tate.pdf},
%date={2014},
%pages={31},
%}

\bib{MP15}{article}{
   author={Madapusi Pera, Keerthi},
   title={The Tate conjecture for K3 surfaces in odd characteristic},
   journal={Invent. Math.},
   volume={201},
   date={2015},
   number={2},
   pages={625--668},
   issn={0020-9910},
   review={\MR{3370622}},
   doi={10.1007/s00222-014-0557-5},
}

\bib{MP16}{article}{
   author={Madapusi Pera, Keerthi},
   title={Integral canonical models for spin Shimura varieties},
   journal={Compos. Math.},
   volume={152},
   date={2016},
   number={4},
   pages={769--824},
   issn={0010-437X},
   review={\MR{3484114}},
   doi={10.1112/S0010437X1500740X},
}

\bib{Ogu83}{article}{
   author={Ogus, Arthur},
   title={A crystalline Torelli theorem for supersingular $K3$ surfaces},
   conference={
      title={Arithmetic and geometry, Vol. II},
   },
   book={
      series={Progr. Math.},
      volume={36},
      publisher={Birkh\"auser Boston, Boston, MA},
   },
   date={1983},
   pages={361--394},
   review={\MR{717616 (85d:14055)}},
}

\bib{PSS71}{article}{
   author={Pjatecki{\u\i}-{\v{S}}apiro, I. I.},
   author={{\v{S}}afarevi{\v{c}}, I. R.},
   title={Torelli's theorem for algebraic surfaces of type ${\rm K}3$},
   language={Russian},
   journal={Izv. Akad. Nauk SSSR Ser. Mat.},
   volume={35},
   date={1971},
   pages={530--572},
   issn={0373-2436},
   review={\MR{0284440 (44 \#1666)}},
}
\bib{Riz05}{thesis}{
   author={Rizov, Jordan},
   title={Moduli of  K3  Surfaces and Abelian Varieties},
   date={2005},
   eprint={http://dspace.library.uu.nl/bitstream/handle/1874/7290/full.pdf?sequence=1},
   type={Ph.D. Thesis}
   organization={Universiteit Utrecht}
}

\bib{Riz06}{article}{
   author={Rizov, Jordan},
   title={Moduli stacks of polarized  K3  surfaces in mixed characteristic},
   journal={Serdica Math. J.},
   volume={32},
   date={2006},
   number={2-3},
   pages={131--178},
   issn={1310-6600},
}

\bib{Riz10}{article}{
   author={Rizov, Jordan},
   title={Kuga-Satake abelian varieties of K3 surfaces in mixed
   characteristic},
   journal={J. Reine Angew. Math.},
   volume={648},
   date={2010},
   pages={13--67},
   issn={0075-4102},
   doi={10.1515/CRELLE.2010.078},
}
\bib{Sha96}{book}{
title={Algebraic Geometry II},
subtitle={Cohomology of Algebraic Varieties. Algebraic Surfaces},
author={Shafarevich, I.R.},
translator={Treger, R.},
publisher={Springer-Verlag},
date={1996}
isbn={978-3540546801}
}

\bib{Tat94}{article}{
   author={Tate, John},
   title={Conjectures on algebraic cycles in $l$-adic cohomology},
   conference={
      title={Motives},
      address={Seattle, WA},
      date={1991},
   },
   book={
      series={Proc. Sympos. Pure Math.},
      volume={55},
      publisher={Amer. Math. Soc.},
      place={Providence, RI},
   },
   date={1994},
   pages={71--83},
   review={\MR{1265523 (95a:14010)}},
}

\bib{Zar85}{article}{
   author={Zarhin, Yu. G.},
   title={A finiteness theorem for unpolarized abelian varieties over number
   fields with prescribed places of bad reduction},
   journal={Invent. Math.},
   volume={79},
   date={1985},
   number={2},
   pages={309--321},
   issn={0020-9910},
   review={\MR{778130 (86d:14041)}},
   doi={10.1007/BF01388976},
}

\end{biblist}
\end{document}